\def\XXint#1#2#3{{\setbox0=\hbox{$#1{#2#3}{\int}$ }
		\vcenter{\hbox{$#2#3$ }}\kern-.6\wd0}}
\renewcommand{\O}{\mathrm{O}}
\newcommand{\Tr}{\operatorname{Tr}}
\newcommand{\transint}{\cap\kern-0.63em|\kern0.7em}
\DeclareMathSymbol{\intprod}{\mathbin}{MnSyC}{'270}
\newcommand{\YM}{\mathcal{YM}}
\newcommand{\LB}{\left[}
\newcommand{\RB}{\right]}
\newcommand{\LA}{\left\langle}
\newcommand{\RA}{\right\rangle}
\newcommand{\p}{{ \partial}}
\newcommand{\Z}{{\mathbb Z}}
\newcommand{\N}{{\mathbb N}}
\newcommand{\C}{{\mathbb C}}
\newcommand{\CP}{{\mathbb{CP}}}
\newcommand{\R}{{\mathbb R}}
\newcommand{\calB}{\mathcal{B}}
\newcommand{\SU}{{\mathrm{SU} }}
\newcommand{\Sp}{{\mathrm{Sp} }}
\DeclareMathOperator{\Ad}{Ad}
\newcommand{\U}{{\mathrm U}}
\newcommand{\SO}{{\mathrm{SO} }}
\newcommand{\G}{{\mathrm G}}
\newcommand{\GL}{{\mathrm{GL} }}
\newcommand{\Spin}{{\mathrm{Spin} }}
\newcommand{\rk}{{\mathrm{rk}}}
\newcommand{\gothg}{{\mathfrak g}}
\newcommand{\calA}{\mathcal{A}}
\newcommand{\calM}{\mathcal{M}}
\newcommand{\calN}{\mathcal{N}}
\newcommand{\End}{{\operatorname{End} \,}}
\renewcommand{\p}{{\partial}}
\newcommand{\eps}{{\varepsilon}}
\newtheorem{thm}{Theorem}[section]
\newtheorem{lemma}[thm]{Lemma}
\newtheorem*{lemma*}{Lemma}
\newtheorem{cor}[thm]{Corollary}
\newtheorem*{conj*}{Conjecture}
\newenvironment{claim}{\par\medskip\noindent\textit{Claim.}\space}{\par\medskip}
\newenvironment{claimproof}{\par\noindent\textit{Proof of claim.}\space}{\hfill$\diamond$\medskip\par}
   \newtheoremstyle{others}
     {3pt}
     {2pt}
     {}
     {}
     {\bf}
     {.}
     {.5em}
     {}
\theoremstyle{others}
\newtheorem{rmk}[thm]{Remark}
\newtheorem*{rmk*}{Remark}
\newtheorem{defn}[thm]{Definition}
\newtheorem*{question*}{Question}
\numberwithin{equation}{section}
\newcommand{\inj}{\hookrightarrow}
\newtheorem*{thm*}{Theorem}
\begin{document}

\title[Parabolic gap theorems]{Parabolic gap theorems for the Yang-Mills energy}
\author{Anuk Dayaprema}
\address{University of Wisconsin, Madison}
\email{dayaprema@wisc.edu}
\author{Alex Waldron}
\address{University of Wisconsin, Madison}
\email{waldron@math.wisc.edu}
\date{\today}

\begin{abstract}
    We prove parabolic versions of several known gap theorems in classical Yang-Mills theory. 
    On an $\SU(r)$-bundle of charge $\kappa$ over the 4-sphere, we show that the space of all connections with Yang-Mills energy less than $4 \pi^2 \left( |\kappa| + 2 \right)$ deformation-retracts under Yang-Mills flow onto the space of instantons, allowing us to simplify the proof of Taubes's path-connectedness theorem.
    On a compact quaternion-K\"ahler manifold with positive scalar curvature, we prove that the space of pseudo-holomorphic connections whose $\mathfrak{sp}(1)$ curvature component has small Morrey norm deformation-retracts under Yang-Mills flow onto the space of instantons. On a nontrivial bundle over a compact manifold of general dimension, we prove that the infimum of the scale-invariant Morrey norm of curvature is positive.
\end{abstract}

\maketitle

\tableofcontents

\thispagestyle{empty}

\section{Introduction}


Classical Yang-Mills theory is concerned with two closely-related 
semi-elliptic partial differential equations involving a \emph{connection} (known to physicists as a \emph{gauge field}) on a vector or principal bundle over an orientable Riemannian manifold. Solutions of the first are known as \emph{instantons}, while solutions of the second are known as \emph{Yang-Mills connections}. These equations are of first and second order, respectively, in the connection. 

In the 4-dimensional setting, an instanton $A$ satisfies the 
\emph{anti-self-duality equation}
$$F_A^+ = 0,$$
up to reversing orientation.
A Yang-Mills connection satisfies the \emph{Yang-Mills equation} 
$$D_A^*F_A = 0.$$
The latter is the Euler-Lagrange equation of the \emph{Yang-Mills energy functional}
$$\YM(A) = \frac12 \int_M |F_A|^2\, dV.$$
Since an instanton minimizes $\YM(\cdot)$ among all connections on the given bundle, it must be Yang-Mills.
However, 
the converse fails in general; for instance,
by the celebrated work of Sibner, Sibner, and Uhlenbeck
\cite{sibnersuhlenbeck}, we know that even the trivial $\SU(2)$-bundle over $S^4$ carries a vast supply of non-minimal Yang-Mills connections. 

Positive results in the converse direction are nonetheless common in the literature. Since the work of Bourguignon, Lawson, and Simons \cite{bourguignonlawson, bourguignonlawsonsimons}, these have fallen into two categories: \emph{stability theorems}, stating that a Yang-Mills connection with nonnegative second variation must be an instanton, and \emph{gap theorems}, stating that a Yang-Mills connection with appropriately small self-dual curvature
must be an instanton. 
We begin by reviewing a number of existing gap theorems.

The $L^2$ gap theorem on $S^4,$ due to Min-Oo \cite{minoogap}, runs as follows. Suppose that $A$ is
a Yang-Mills connection on a vector bundle $E \to S^4$ with
$$\YM(A) < 4 \pi^2 \left( |\kappa(E)| + \delta \right),$$
where $\delta$ is a positive universal constant and $\kappa(E)$ is the characteristic number defined in \S \ref{ss:characteristicnumber} below; then 
$A$ is an instanton. See Donaldson-Kronheimer \cite[\S 2.3.10]{donkron} for a proof, and Dodziuk and Min-Oo \cite{dodziukminoo} for an improved constant.
More recently, in the case that $E$ has structure group $\SU(r)$, Gursky, Kelleher, and Streets \cite{gurskykelleherstreets} have managed to improve the value to $\delta = 2,$ or twice the Yang-Mills energy of the unit-charge instanton. 
This is consistent with known examples of non-minimal Yang-Mills connections on $S^4$ \cite{bor, sadunsegert, sibnersuhlenbeck}.

More generally, the existence of an $L^2$ gap is known for Riemannian 4-manifolds where the scalar curvature dominates the self-dual Weyl tensor \cite{gerhardtgap, parker4Dgaugetheories},  
for compact K\"ahler surfaces with positive scalar curvature \cite{huangkahlersurfaces}, for bundles where the infimum of the Yang-Mills energy is zero \cite{feehanflatgap}, and for certain bundles over compact 4-manifolds with generic metrics 
\cite{feehanfourgap} (based on the Freed-Uhlenbeck Theorem \cite[Theorem 3.17]{freeduhlenbeck}).
Under mild geometric constraints, Vieira extended the Gursky-Kelleher-Streets result to noncompact manifolds with positive Yamabe constant \cite{vieirapositiveyamabegap}. Interestingly, the existence of an $L^2$ gap on a general compact Riemannian 4-manifold remains unknown. 

The notion of instanton can be extended to a higher-dimensional manifold endowed with an $(n-4)$-calibration 
$\Psi$ \cite{corrigandevchand, tiancalibrated}: a connection \(A\) is called a (\(\Psi\)-)\emph{instanton} if
\begin{align*}
    \ast (F_A \wedge \Psi) = -F_A.
\end{align*}
In particular, manifolds with special holonomy 
come with such calibrations, 
and the gap question is basic in higher-dimensional gauge theory. 
No firm gap results exist for $\G_2$- or $\Spin(7)$-manifolds, and the question appears to be a difficult one. However, on a quaternion-K{\"a}hler manifold with positive scalar curvature, Taniguchi established an \(L^\infty\) gap \cite{qksupgap} for pseudo-holomorphic (see \cite[\textsection 3.3]{oliveirawaldron}) Yang-Mills connections, and later an \(L^{\frac{n}{2}}\) gap \cite{qkL2gap}. The \(L^\infty\) result can be seen as a generalization of Bourguignon-Lawson-Simons's 
theorem on \(S^4\), while the \(L^{\frac{n}{2}}\) result can be seen as a generalization of Min-Oo's theorem. \par

Although higher-dimensional manifolds with generic holonomy lack a natural class of minimizers for \(\mathcal{YM}\) other than the flat connections, the gap question can still be posed at the zero energy.
Bourguignon-Lawson \cite{bourguignonlawson} proved an \(L^\infty\) gap theorem for the full curvature of a connection on a bundle over \(S^n, n \geq 4\). Gerhardt \cite{gerhardtgap} proved an \(L^{\frac{n}{2}}\) gap theorem for compact manifolds where the Riemann curvature satisfies a certain pointwise positivity condition. Zhou \cite{zhougap} has obtained \(L^p\), \(p \geq \frac{n}{2}\), gaps which depend on the constants in the global Sobolev inequality for functions on the manifold. The existence of a (non-explicit) $L^2$ gap near the zero energy is easily proved by combining standard estimates with the well-known \L ojasiewicz-Simon inequality; see Corollary \ref{cor:ellipticgap} 
below. 

Notice that these gap results all share an obvious drawback, namely, that the connection in question must solve the Yang-Mills equation {\it a priori}. 
Although Uhlenbeck's direct method succeeds at producing solutions
in dimension less than four, the Yang-Mills equation is itself extremely difficult to solve in dimension at least four. This is due both to the gauge freedom of the Yang-Mills functional and to the famous ``bubbling phenomenon'' which begins in dimension four and is less-well understood in higher dimensions.

The goal of this paper is to broaden the scope of these results by removing the 
assumption that the relevant connection be Yang-Mills. As an alternative,
we consider solutions of the Yang-Mills \emph{flow}:
\begin{equation}\label{ymf}
\frac{\p A}{\p t} = - D_A^*F_A.
\end{equation}
In the same circumstances 
as several of the elliptic gap theorems discussed above, we will 
establish that (\ref{ymf})
gives a deformation retraction from an appropriate 
neighborhood in the space of all connections 
onto the 
subspace of instantons. A result of this kind will be called a \emph{parabolic gap theorem}.

We will also demonstrate a few applications of our main results.

\vspace{5mm}

\subsection{Deformation to instantons on the 4-sphere}

Our first result is a parabolic gap theorem on $S^4$ based on Gursky-Kelleher-Streets's improvement of the Min-Oo gap theorem and the second-named author's thesis. 

\begin{thm}\label{thm:fourspheregap}
Let $A_0$ be a smooth connection on an $\SU(r)$-bundle $E \to S^4$ with energy
\begin{equation}\label{fourspheregap:energylessthan8pi2}
\YM(A_0) < 4 \pi^2 \left( |\kappa(E)|+ 2 \right).
\end{equation}
If $A(t)$ solves (\ref{ymf}) with $A(0) = A_0,$ then $A(t)$ converges smoothly to an instanton on $E$ as $t \to \infty.$ Moreover, the map $A_0 \mapsto \lim_{t \to \infty} A(t)$ gives a deformation retraction from the space of connections satisfying (\ref{fourspheregap:energylessthan8pi2}) onto the space of instantons. 
\end{thm}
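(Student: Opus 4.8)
\emph{Proof strategy.} The convergence assertion is the analytic core, and I would deduce it from the second-named author's thesis together with the Gursky--Kelleher--Streets gap theorem and the hypothesis (\ref{fourspheregap:energylessthan8pi2}). By the long-time existence theorem for Yang--Mills flow on a compact Riemannian $4$-manifold, the solution $A(t;A_0)$ of (\ref{ymf}) with $A(0)=A_0$ exists and stays smooth for all $t\in[0,\infty)$, so $\YM(A(t))$ is non-increasing and $\int_0^{\infty}\|D_{A(t)}^{*}F_{A(t)}\|_{L^2}^2\,dt<\infty$. As $t\to\infty$, a subsequence $A(t_j)$ converges smoothly away from finitely many points of $S^4$ to a smooth Yang--Mills connection $A_\infty$ on a bundle $P_\infty\to S^4$, together with finitely many nontrivial Yang--Mills bubbles $\hat A_i$ on $S^4$, subject to the charge identity $\kappa(P)=\kappa(P_\infty)+\sum_i\kappa(\hat A_i)$ and the energy inequality $\YM(A_\infty)+\sum_i\YM(\hat A_i)\le\YM(A_0)$. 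The key point is that (\ref{fourspheregap:energylessthan8pi2}) forbids bubbling: orienting $S^4$ so that $\kappa(P)\ge 0$, the topological identity $\tfrac12\|F^+_A\|_{L^2}^2-\tfrac12\|F^-_A\|_{L^2}^2=-4\pi^2\kappa(P)$ shows (\ref{fourspheregap:energylessthan8pi2}) is equivalent to the scale-invariant smallness $\|F^+_{A_0}\|_{L^2}^2<8\pi^2$, a quantity that is non-increasing along the flow and whose value for $A_0$ bounds the sum of the corresponding values for $A_\infty$ and the bubbles $\hat A_i$. Since any self-dual bubble, or non-minimal Yang--Mills bubble, carries at least $8\pi^2$ of $\|F^+\|_{L^2}^2$ --- in the second case by the Gursky--Kelleher--Streets bound $\YM\ge 4\pi^2(|\kappa|+2)$ for non-minimal Yang--Mills connections --- whereas an instanton bubble carries none, only instanton bubbles could form, and these in turn are excluded by the finer analysis of the thesis (which exploits that, by Gursky--Kelleher--Streets, there are \emph{no} Yang--Mills connections at all with energy in the open interval $(4\pi^2|\kappa(P)|,\,4\pi^2(|\kappa(P)|+2))$). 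Hence $A(t)\to A_\infty$ smoothly on $P$ itself, $\YM(A_\infty)<4\pi^2(|\kappa(P)|+2)$ so that $A_\infty$ is an instanton by Gursky--Kelleher--Streets, and the convergence holds along the full flow, not merely a subsequence, by the \L ojasiewicz--Simon inequality at $A_\infty$. Excluding bubbling is the step I expect to be the main obstacle, and it is where the strength of the thesis and of the improved elliptic gap is used.

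Granting this, let $\mathcal{C}\subset\mathcal{A}_P$ be the set of smooth connections on $P$ satisfying (\ref{fourspheregap:energylessthan8pi2}) --- an open subset of the affine space $\mathcal{A}_P$ of all smooth connections, in the $C^\infty$ topology --- and let $\mathcal{I}\subset\mathcal{C}$ be the subspace of instantons; indeed $\mathcal{I}\subset\mathcal{C}$, since an instanton on $P$ has energy $4\pi^2|\kappa(P)|<4\pi^2(|\kappa(P)|+2)$. By the first part the map $r\colon\mathcal{C}\to\mathcal{I}$, $r(A_0):=\lim_{t\to\infty}A(t;A_0)$, is well defined, and I would realize it as a strong deformation retraction by reparametrizing the flow: set $H\colon\mathcal{C}\times[0,1]\to\mathcal{C}$, $H(A_0,s)=A(\tfrac{s}{1-s};A_0)$ for $s\in[0,1)$, and $H(A_0,1)=r(A_0)$. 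The formal requirements are immediate: $H(\,\cdot\,,0)=\mathrm{id}$ and $H(\,\cdot\,,1)=r$; the image stays in $\mathcal{C}$ because $\YM(A(t))\le\YM(A_0)<4\pi^2(|\kappa(P)|+2)$; and if $A_0\in\mathcal{I}$ then $D_{A_0}^{*}F_{A_0}=0$, so (\ref{ymf}) has the constant solution $A(t)\equiv A_0$ and $H(A_0,s)=A_0$ for all $s$, i.e. $H$ is stationary on $\mathcal{I}$. Everything therefore reduces to the continuity of $H$.

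Continuity of $H$ on $\mathcal{C}\times[0,1)$ amounts to continuous dependence of the Yang--Mills flow on smooth initial data, uniformly on compact time intervals, which is standard (via the DeTurck trick and parabolic regularity, and which underlies the convergence analysis above in any case). The delicate point is continuity of $r$, i.e. of $H$ at $s=1$. For this I would use that on $S^4$ the space of instantons is a smooth manifold: for an instanton $A_\infty$ the obstruction space $H^2_{A_\infty}$ vanishes by the Atiyah--Hitchin--Singer vanishing theorem (the round metric being self-dual with positive scalar curvature), so the \L ojasiewicz--Simon inequality for $\YM$ holds near $A_\infty$ with exponent $\tfrac12$ and a constant uniform over a neighborhood $U$; consequently the flow starting anywhere in $U$ converges to an instanton at a uniform exponential rate. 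Given $A_0$ with $A_\infty=r(A_0)$, fix $T$ so large that $A(T;A_0)$ lies well inside such a $U$; then $A(T;A_0')\in U$ for $A_0'$ near $A_0$ by finite-time continuous dependence, and $\dist(r(A_0'),r(A_0))\le\dist(r(A_0'),A(T;A_0'))+\dist(A(T;A_0'),A(T;A_0))+\dist(A(T;A_0),r(A_0))$, whose outer terms are small for $T$ large by the uniform exponential convergence in $U$ and whose middle term is small for $A_0'$ near $A_0$ by finite-time continuous dependence. This gives continuity of $r$, hence of $H$, so $H$ is a strong deformation retraction of $\mathcal{C}$ onto $\mathcal{I}$. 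Finally, since the entire construction is equivariant for the gauge group, it descends to the quotient --- the form needed for the application to Taubes's path-connectedness theorem.
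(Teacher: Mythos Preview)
Your high-level strategy---combine the second author's thesis with the Gursky--Kelleher--Streets gap---is correct, and your treatment of the deformation-retraction half (well-posedness for $s<1$, \L ojasiewicz--Simon with the optimal exponent at $s=1$, gauge equivariance) matches the paper closely. The gap is in the convergence argument, at exactly the step you gloss as ``excluded by the finer analysis of the thesis.''

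The energy-interval observation in your parenthetical does \emph{not} rule out instanton bubbling. With $\kappa=\kappa(P)\ge 0$, a scenario fully consistent with the charge identity and the energy inequality is: the Uhlenbeck limit $A_\infty$ is an instanton on a bundle of charge $\kappa-1$, one ASD bubble of charge $1$ splits off, and $\lim_t\YM(A(t))=4\pi^2\kappa$ exactly. Nothing in the statement ``there are no Yang--Mills connections with energy in $(4\pi^2\kappa,\,4\pi^2(\kappa+2))$'' forbids this. What you actually need is \emph{strong} $L^2$ convergence of $|F^+|$ (so that $\|F^+(t)\|\to 0$, after which the Weitzenb\"ock argument on $S^4$ gives exponential decay), and for that you need a \emph{pointwise} bound on $|F^+|$ along the flow. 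The thesis only supplies such a bound when $\|F^+\|_{L^2}$ is below a small universal $\eps_0$, not merely below $\sqrt{8\pi^2}$.

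Closing this gap is precisely the new analytic content of the paper: Theorem~\ref{thm:strongepsreg} is an \emph{optimal} split $\eps$-regularity estimate giving $|F^+(x,t)|\le C/\min\{1,t\}$ whenever $\|F^+\|_{L^2}^2$ lies below the GKS threshold $8\pi^2$. Its proof is a type-II blowup / point-selection argument whose final step applies GKS to the blowup limit on $S^4$---so GKS enters through $\eps$-regularity, not through an energy-level count. With this bound in hand, the key estimate from the thesis (Theorem~\ref{thm:F+boundimpliesFbound}) controls $\|F\|_\infty$, the Uhlenbeck convergence of $|F^+|$ becomes strong in $L^2$, the Weitzenb\"ock/Gronwall step yields exponential decay of $\|F^+\|$, hence $\int_0^\infty\|F^+\|_\infty\,dt<\infty$ and a uniform bound on $\|F\|_\infty$. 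That uniform curvature bound is what actually prevents bubbling; your bubble-counting argument does not.
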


Note that if we ask only for Uhlenbeck convergence along a subsequence, 
then the result follows directly from \cite{gurskykelleherstreets} and \cite{lte}; 
the main point is that the convergence takes place smoothly. 
We also note that the statement is true both before or after modding out by the gauge group.

The proof depends on an optimal ``split'' $\eps$-regularity theorem for the self-dual curvature along (\ref{ymf}), which is proved using a type-II blowup argument appealing to Uhlenbeck's compactness and removable singularity theorems. 
By the key estimate of \cite{instantons}, this guarantees not only long-time existence but smooth convergence when working 
on $S^4.$
Theorem \ref{thm:fourspheregap} can also be compared with \cite[Theorem 1.3]{gurskykelleherstreets}, which gives the same result on the trivial bundle. 

As an application of Theorem \ref{thm:fourspheregap}, we can give a streamlined proof of the following classic theorem of Taubes.
\begin{thm}[Taubes \cite{taubespathconnected}, 1984]\label{thm:pathconnected}
For any $\SU(r)$-bundle $E \to S^4,$ the moduli space of instantons on $E$ is path-connected.
\end{thm}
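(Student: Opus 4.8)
The plan is to use Theorem \ref{thm:fourspheregap} to reduce the connectedness of the instanton moduli space $\mathcal{M}_\kappa(P)$ to the connectedness of a larger, more flexible space: the space $\mathcal{A}_{<8\pi^2+4\pi^2|\kappa|}$ of all connections on $P$ with Yang-Mills energy below the gap $4\pi^2(|\kappa|+2)$. Since Theorem \ref{thm:fourspheregap} exhibits $\mathcal{M}_\kappa$ as a deformation retract of $\mathcal{A}_{<8\pi^2+4\pi^2|\kappa|}$ under Yang-Mills flow, the inclusion $\mathcal{M}_\kappa \hookrightarrow \mathcal{A}_{<8\pi^2+4\pi^2|\kappa|}$ is a homotopy equivalence; in particular the two spaces have the same set of path components. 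So it suffices to show that the space of connections of energy less than $4\pi^2(|\kappa|+2)$ is path-connected.

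First I would handle the existence of at least one such connection: the basic instanton(s) on $P$ have energy exactly $4\pi^2|\kappa|$, which lies below the bound, so $\mathcal{M}_\kappa$ is nonempty and $\mathcal{A}_{<8\pi^2+4\pi^2|\kappa|}$ is nonempty. Next, the key point is that $\mathcal{A}_{<8\pi^2+4\pi^2|\kappa|}$ is path-connected. Here one exploits the extra room afforded by the energy bound: given two connections $A_0, A_1$ in this space, the straight-line path $A_s = (1-s)A_0 + sA_1$ need not stay below the energy threshold, so instead one argues that any connection of energy below $4\pi^2(|\kappa|+2)$ can be joined by a path (staying below that threshold) to a \emph{fixed} reference point. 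A clean way to do this: take a concentrated instanton $I_\lambda$ on $P$ obtained by gluing a highly-concentrated charge-$\pm 1$ bubble (if $|\kappa|\ge 1$) or, more simply, note that under Yang-Mills flow $A_0$ flows to some instanton, so it is enough to connect any two \emph{instantons} within $\mathcal{A}_{<8\pi^2+4\pi^2|\kappa|}$ — but that is circular. The honest approach is: show that $\mathcal{A}_{<8\pi^2+4\pi^2|\kappa|}$ deformation-retracts (by scaling down curvature, e.g. pulling back by dilations centered at a point, or by a direct interpolation through a low-energy region) onto a connected subset, or invoke that the full space $\mathcal{A}$ of \emph{all} smooth connections on $P$ is affine hence contractible, and that $\mathcal{A}_{<c}$ is an open subset whose connectedness follows because any connection can be continuously deformed to lower its energy below any positive constant — e.g. via the rescaling trick $A \mapsto A_\lambda$ where $A_\lambda$ is the pullback of $A$ under a conformal dilation $\phi_\lambda:S^4 \to S^4$ fixing a point $p$, which concentrates $A$ near $p$ and, since $\YM$ is conformally invariant in dimension $4$, actually preserves energy — so that does not work directly either; instead one uses that pulling back a \emph{fixed} connection near the flat connection at most points, the energy over a shrinking region tends to zero, allowing a patching to the (zero-energy on the trivial bundle) side.

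The cleanest route, and the one I would write up, is: (i) any two instantons of charge $\kappa$ can be joined within $\mathcal{A}_{<8\pi^2+4\pi^2|\kappa|}$ by first flowing one to a standard model via the explicit ADHM/conformal description near the boundary of moduli space, \emph{or} (ii) — avoiding explicit instantons entirely — observe that $\mathcal{M}_\kappa$ is connected iff $\pi_0(\mathcal{A}_{<8\pi^2+4\pi^2|\kappa|}) = \ast$, and the latter follows from a Morse-theoretic / direct argument that the sublevel set $\{\YM < 4\pi^2(|\kappa|+2)\}$ in the space of connections (mod gauge) is connected because one can always decrease energy by concentrating curvature into a single bubble, which lands one in a standard connected family. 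The main obstacle is exactly this last point: proving $\pi_0$ of the sublevel set is trivial without circular reference to moduli-space connectedness. I expect to resolve it by using that the bound $4\pi^2(|\kappa|+2)$ is strictly below $4\pi^2(|\kappa|+4)$, the energy at which a \emph{third} independent bubble could form, so that any connection in the sublevel set, after Yang-Mills flow, is an instanton that is a small perturbation of at most a $2$-bubble configuration over a fixed ideal limit — and the stratified structure of such near-boundary instantons, together with the connectedness of the rescaling parameter space (positions in $S^4$ and scales in $(0,\infty]$, which is connected), glues everything into one component. Once the sublevel set is shown connected, the homotopy equivalence from Theorem \ref{thm:fourspheregap} finishes the proof.
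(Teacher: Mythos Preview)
Your reduction via Theorem \ref{thm:fourspheregap} is exactly right: the deformation retraction gives $\pi_0(\calM_\kappa) = \pi_0(\calN_\kappa)$ where $\calN_\kappa$ is the sublevel set $\{\YM < 4\pi^2(|\kappa|+2)\}$, so everything hinges on showing $\calN_\kappa$ is path-connected. But your attempts to prove this directly all run into the obstacles you yourself identify: affine interpolation leaves the sublevel set, conformal dilation preserves energy in dimension four, and the final ``2-bubble configuration'' argument is both vague and circular (you are trying to deduce $\pi_0(\calM_\kappa)=\ast$ from a description of the boundary structure of $\calM_\kappa$, which presupposes understanding of the very moduli space whose connectedness is in question).

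The missing idea is \emph{induction on the charge}, together with Taubes's concrete subtraction and addition operations on connections. The paper retains Taubes's Theorem 1.4: assuming $\calM_{k'}$ is path-connected for all $k' < k$, one shows $\calN_k$ is path-connected by the following trick. Given instantons $m_0, m_1 \in \calM_k$, Taubes ``subtracts'' a concentrated charge-$1$ instanton $b_i$ from each $m_i$ to produce connections $(m_i - b_i)_\rho$ of charge $k-1$ with energy below $4\pi^2(k+1)$, i.e.\ lying in $\calN_{k-1}$. By the inductive hypothesis $\calM_{k-1}$ is connected, hence (via Theorem \ref{thm:fourspheregap}) so is $\calN_{k-1}$, giving a path $\gamma(t)$ between the two subtracted connections. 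One then glues back a path of charge-$1$ instantons and checks the resulting glued path stays in $\calN_k$; a final ``cancellation'' homotopy connects the endpoints back to $m_0, m_1$. The base case $\calM_1$ is connected by the explicit description of charge-$1$ instantons. The paper's contribution is that Theorem \ref{thm:fourspheregap} replaces all of Taubes's Lusternik--Schnirelman analysis (his Theorem 1.3) with the single clean statement $\calM_k \simeq \calN_k$; the inductive gluing step (his Theorem 1.4) is still needed and is where the actual path gets built.
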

\noindent In our variant, described in \S \ref{ss:pathconnected}, all of the hard analysis in Taubes's proof is replaced by Theorem \ref{thm:fourspheregap}. 
We can also remove the requirement that $r = 2$ or $3,$ since the GKS gap theorem \cite{gurskykelleherstreets} renders the use of Taubes's index bounds \cite{taubesstability} unnecessary. For general $r,$ connectedness of the $\SU(r)$ moduli spaces on $S^4$ was previously observed by Donaldson \cite[p. 454]{donaldsoninstantonsandgit} using his algebraic description of framed instantons.

Last, it is interesting to point out that Theorem \ref{thm:pathconnected} provides a purely differential-geometric proof of the completeness of the ADHM construction \cite{adhm}
---see Corollary \ref{cor:adhm} below. We include this observation (without claiming any originality)
in order to highlight the significance of Taubes's theorem. 





\vspace{5mm}
 
\subsection{Deformation to instantons on quaternion-K\"ahler manifolds}

Let $M$ be a quater-nion-K{\"a}hler manifold of dimension $4m$. For \(m \geq 2\), this means a Riemannian manifold whose holonomy lies in
$$\Sp(m)\Sp(1) \subset \SO(4m).$$ When \(m = 1\), we further impose that \(M\) be Einstein 
and half-conformally flat. 

On a quaternion-K{\"a}hler manifold, we have the orthogonal decomposition \cite[\textsection 3.3]{oliveirawaldron}
\begin{align*}
    \Lambda^2 T^\ast M &= \Lambda_{\mathfrak{sp}(m)}^2 \oplus \Lambda_{\mathfrak{sp}(1)}^2  \oplus \Lambda_{(\mathfrak{sp}(m) \oplus \mathfrak{sp}(1))^\perp}^2 \\
    &=: \Lambda_-^2 \oplus \Lambda_+^2 \oplus \Lambda_\perp^2.
\end{align*} Hence, given a connection \(A\), the curvature decomposes as
\begin{align*}
    F_A = F_A^- + F_A^+ + F_A^\perp.
\end{align*} In this paper, an \emph{instanton} on a quaternion-K{\"a}hler manifold is a connection satisfying \(F_A = F_A^-\), i.e., whose curvature takes values in \(\Lambda_-^2(\mathfrak{g}_E)\). These are minimizers of the Yang-Mills functional and hence are Yang-Mills themselves. Connections whose curvatures take values either in \(\Lambda_+^2(\mathfrak{g}_E)\) or in \(\Lambda_{\perp}^2(\mathfrak{g}_E)\) are sometimes also referred to as instantons in the literature. \par 

More generally, a \emph{pseudo-holomorphic connection}, i.e., an \(\text{Sp}(m)\text{Sp}(1)\)-compatible connection per Oliveira-Waldron \cite[Def. 2.1]{oliveirawaldron}, is a connection whose curvature takes values in 
$$\Lambda_{-}^2(\mathfrak{g}_E) \oplus \Lambda_{+}^2(\mathfrak{g}_E).$$
We have the following result, which can be seen as a generalization of Theorem \ref{thm:fourspheregap}. Here the gap is measured in the $M^{2,4}$ Morrey norm (defined in (\ref{preliminaries:morreynormdefinition}) below), which is a natural generalization of the conformally invariant \(L^2\) norm in four dimensions.


\begin{thm}\label{thm:quaternionkahlergap}
Let $M$ be a compact quaternion-K\"ahler manifold of dimension \(4m\). There exists a constant $\delta_0 > 0,$ depending only on the geometry of $M,$ as follows.

Suppose that $A_0$ is a pseudo-holomorphic connection on $E \to M,$  with 
\begin{align}\label{quaternionkahler:energylessthandelta}
    \left\|F_{A_0}^+ \right\|_{M^{2, 4}} < \delta_0.
\end{align}
Then the solution \(A(t)\) of (\ref{ymf}) with $A(0) = A_0$ exists for all time. If $M$ has positive scalar curvature, then $A(t)$ converges smoothly as \(t \to \infty\) to an instanton $A_\infty,$ satisfying $F_{A_\infty} = F_{A_\infty}^{-}.$ Moreover, the space of 
instantons is a neighborhood deformation retract (see Definition \ref{defn:nbddefret} below) of the space of pseudo-holomorphic connections.
\end{thm}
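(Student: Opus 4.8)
The plan is to combine four ingredients: (a) that the pseudo-holomorphic condition $F_A^\perp=0$ is preserved by (\ref{ymf}); (b) a ``split'' $\eps$-regularity theorem for the $\mathfrak{sp}(1)$-curvature $F_A^+$, which will give long-time existence whenever (\ref{quaternionkahler:energylessthandelta}) holds; (c) that positivity of the scalar curvature forces $F_A^+\to 0$, so the flow converges smoothly to an instanton; and (d) a soft argument assembling (a)--(c) into the retraction. For (a) I would argue as follows: since the holonomy of $M$ lies in $\Sp(m)\Sp(1)\subset\SO(4m)$, the subbundle $\Lambda_\perp^2\subset\Lambda^2T^*M$ is parallel, hence so is $\Lambda_-^2\oplus\Lambda_+^2$. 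Differentiating (\ref{ymf}) and commuting the orthogonal projection onto $\Lambda_\perp^2(\mathrm{ad}\,P)$ through the Weitzenböck Laplacian, one finds that $F_{A(t)}^\perp$ solves a homogeneous linear parabolic system with zero initial data and coefficients bounded on finite time intervals; by uniqueness it vanishes identically, so the flow stays pseudo-holomorphic (cf. \cite{oliveirawaldron}).

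For (b): for a pseudo-holomorphic connection, the second Bianchi identity together with the quaternion-K\"ahler structure equations yields a Weitzenböck-type identity for $F_A^+$ in which $F_A^+$ is paired with the scalar curvature and with a curvature term of the schematic shape $F_A\#F_A^+$, the most dangerous $F_A^-$-contributions being suppressed by $[\mathfrak{sp}(m),\mathfrak{sp}(1)]=0$. Along (\ref{ymf}) this will produce a differential inequality, schematically
$$\left(\frac{\p}{\p t}-\Delta\right)\bigl|F_A^+\bigr|^2\ \le\ -2\bigl|\na_A F_A^+\bigr|^2+C\bigl(1+\bigl|F_A^+\bigr|\bigr)\bigl|F_A^+\bigr|^2,$$
which is subcritical in $F_A^+$. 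A Moser iteration on parabolic cylinders then yields $\eps_0>0$ such that, if the scale-invariant $M^{2,4}$ norm of $F_A^+$ over a cylinder is $<\eps_0$, then $|F_A^+|$ is bounded there by a fixed multiple of that norm. Choosing $\delta_0\le\eps_0$, the smallness (\ref{quaternionkahler:energylessthandelta}) persists along the flow---the $M^{2,4}$ norm of $F_A^+$ being non-increasing---so $F_A^+$ cannot concentrate. Combined with the long-time existence theory of \cite{lte} in the four-dimensional case $m=1$ and a type-II blow-up analysis in the style of Theorem \ref{thm:fourspheregap} (any bubble would be a nonflat finite-energy instanton on $\R^{4m}$ with $F^+\equiv 0$, contradicting the persisting smallness), this would rule out finite-time singularities, giving long-time existence irrespective of the sign of the scalar curvature.

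For (c): when the scalar curvature is positive the Weitzenböck term above acquires a definite sign, forcing $\|F_{A(t)}^+\|_{L^2}\to 0$. Since $\YM(A(t))$ is non-increasing with $\int_0^\infty\|D_A^*F_A\|_{L^2}^2\,dt<\infty$, the flow subconverges modulo gauge to a Yang-Mills connection, and the bubbling that a priori accompanies this is excluded by (b); the limit is therefore a pseudo-holomorphic Yang-Mills connection with $F^+=0$, i.e. an instanton. I would then upgrade subconvergence to genuine smooth convergence $A(t)\to A_\infty$ via the {\L}ojasiewicz--Simon inequality for $\YM$ near $A_\infty$, which also supplies a convergence rate depending only on the geometry of $M$ and a bound for the limit.

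For (d): let $\mathcal P$ be the space of smooth pseudo-holomorphic connections on $P$, $\mathcal I\subset\mathcal P$ the instantons, and $U=\{A\in\mathcal P:\|F_A^+\|_{M^{2,4}}<\delta_0\}$, an open neighborhood of $\mathcal I$ in $\mathcal P$ (instantons have $F^+=0$). Reparametrizing time by $\tau\mapsto\tan(\tfrac{\pi}{2}\tau)$, define $R\colon U\times[0,1]\to U$ by $R(A_0,\tau)=A\bigl(\tan(\tfrac{\pi}{2}\tau)\bigr)$ for $\tau<1$ and $R(A_0,1)=A_\infty$. Then $R$ takes values in $U$ by (b); $R(\cdot,0)=\mathrm{id}$; $R(U\times\{1\})\subset\mathcal I$ by (c); and $R(A_0,\tau)=A_0$ for every $A_0\in\mathcal I$, since instantons are Yang-Mills, hence stationary for (\ref{ymf}). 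Joint continuity of $R$ on $U\times[0,1)$ is the usual continuous dependence of the flow on its initial data in the smooth topology, using the uniform curvature bounds from (b); continuity at $\tau=1$ will follow because the {\L}ojasiewicz--Simon decay rate in (c) is uniform for $A_0$ in compact subsets of $U$. Thus $R$ is a deformation retraction of $U$ onto $\mathcal I$, and passing to the quotient by the gauge group gives the mod-gauge statement. I expect the main obstacle to be part (b): proving a split $\eps$-regularity theorem strong enough to govern the flow using only smallness of the $\mathfrak{sp}(1)$-curvature, even though $F_A^-$ is large and carries the topological charge of $P$---equivalently, ruling out a finite-time singularity of (\ref{ymf}) hidden in $F_A^-$. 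A secondary difficulty is the uniformity of the {\L}ojasiewicz--Simon convergence needed for continuity of the limit map at $\tau=1$.
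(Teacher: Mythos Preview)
Your outline correctly identifies the four ingredients and, notably, the main obstacle---controlling the full curvature $F_A$ using only smallness of $F_A^+$. But your proposed mechanism for (b) does not close this gap, and it differs from what the paper actually does.

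First, a smaller point: you assert that ``the $M^{2,4}$ norm of $F_A^+$ [is] non-increasing'' along the flow. This is not obvious and the paper does not claim it. What \emph{is} non-increasing is the $L^2$ norm $\|F_A^+\|$, by the Chern--Weil identity (\ref{preliminaries:qkchernweil}). The paper then feeds the differential inequality for $|F^+|$ together with $\sup_t\|F^+(t)\|\le C\|F_{A_0}^+\|_{2,4}$ into a general parabolic Morrey result \cite[Theorem 1.2]{estimatespaper}, which \emph{outputs} the persisting $M^{2,4}$-bound (\ref{detailedquaternionkahlergap:estimatepapercriticalbounds}) rather than assuming it.

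The main gap is in your long-time existence argument. You propose a type-II blow-up analysis: a bubble would be a finite-energy instanton on $\R^{4m}$ with $F^+\equiv 0$, contradicting smallness. But in dimension $4m>4$ this route is delicate---the singular set need not be finite, the blow-up limit lives on flat $\R^{4m}$ where the $\Lambda^2_+$ decomposition degenerates, and even granting all that, having $F^+\equiv 0$ on the bubble does not by itself contradict anything about the original flow. The paper avoids this entirely. The key tool is \cite[Cor.~7.3]{oliveirawaldron} (stated in the 4D case as Theorem \ref{thm:F+boundimpliesFbound}): for a pseudo-holomorphic solution,
\[
\|F(t_2)\|_\infty \;\le\; \max\bigl\{R_0^{-2},\, e^{C\max\{1,K\}}\|F(t_1)\|_\infty\bigr\}, \qquad K=\int_{t_1}^{t_2}\|F^+(t)\|_\infty\,dt.
\]
Once the Morrey/$\eps$-regularity step gives $\|F^+(t)\|_\infty\le C\delta/\min\{1,t\}$, this estimate shows $\|F\|_\infty$ cannot blow up in finite time, which is exactly how one ``rules out a singularity hidden in $F^-$.'' This is the ingredient you are missing.

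For (c), the paper again does not use \L ojasiewicz--Simon. The Weitzenb\"ock formula (\ref{preliminaries:qkweitzenbock}) with positive scalar curvature and the sup bound on $F^+$ give $\frac{d}{dt}\|F^+\|^2\le -2c\|F^+\|^2$, hence exponential $L^2$ and then $L^\infty$ decay of $F^+$. Since $\|F^+(t)\|_\infty\in L^1([1,\infty))$, the same estimate above gives $\sup_{t\ge 1}\|F(t)\|_\infty<\infty$, and convergence follows directly from $\int\|D^*F\|\,dt<\infty$. Your (d) is essentially the paper's argument, with continuity at $\tau=1$ handled by Theorem \ref{thm:continuityatinfinitetime} (which does invoke \L ojasiewicz--Simon, but for a different purpose than you suggest).
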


\vspace{5mm}

\subsection{Deformation to flat connections on general Riemannian manifolds}
We have the following parabolic gap statement for the Morrey norm of the full curvature. 
\begin{thm}\label{thm:flatgap}
Let 
\(M\) be a compact 
Riemannian manifold of dimension $n \geq 4.$ There exists a constant $\delta_1 > 0,$ depending only on $\rk(E)$ and the geometry of $M,$ as follows. Suppose that \(A_0\) is a connection on \(E \to M\) with
\begin{equation}\label{flatgap:Morreysmall}
\| F_{A_0} \|_{M^{2, 4}} < \delta_1.
\end{equation}
Then the solution $A(t)$ of (\ref{ymf}) with $A(0) = A_0$ exists for all time and converges smoothly as \(t \to \infty\) to a flat connection. 
Moreover, the space of flat connections is a neighborhood deformation retract of the space of all connections.
\end{thm}

We include two proofs of this result, the first based on the parabolic Morrey estimates of the first-named author \cite{estimatespaper} and the second based on Hamilton's monotonicity formula for Yang-Mills flow \cite{hamiltonmonotonicity}. 
Theorem \ref{thm:flatgap} has the following consequence:

\begin{cor}
Let $E \to M$ be a vector bundle over a compact 
Riemannian manifold of dimension $n \geq 4$ and let $\mathcal{A}_E$ denote the space of all connections on $E.$ 
There exists a flat connection on \(E\) if and only if
\begin{align*}
    \inf\left\{\|F_A\|_{M^{2, 4}} \mid A \in \mathcal{A}_E \right\} = 0.
\end{align*}
\end{cor}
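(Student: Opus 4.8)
The plan is to deduce this corollary directly from Theorem~\ref{thm:flatgap}; there is essentially no analytic content beyond what that theorem already supplies.

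For the forward implication I would simply observe that if $A_0$ is a flat connection on $P$ then $F_{A_0} \equiv 0$, so $\|F_{A_0}\|_{M^{2,4}} = 0$; hence the infimum of $\|F_A\|_{M^{2,4}}$ over the affine space $\calA_P$ of connections on $P$ is zero, indeed attained. For the reverse implication, let $\delta_1 > 0$ be the constant of Theorem~\ref{thm:flatgap} associated to the compact group $G$ and the compact Riemannian manifold $M$. Assuming $\inf\{\|F_A\|_{M^{2,4}} : A \in \calA_P\} = 0$, the definition of infimum furnishes a smooth connection $A_0$ on $P$ with $\|F_{A_0}\|_{M^{2,4}} < \delta_1$, i.e. satisfying hypothesis (\ref{flatgap:Morreysmall}). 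Applying Theorem~\ref{thm:flatgap} to this $A_0$, the solution $A(t)$ of (\ref{ymf}) with $A(0) = A_0$ exists for all time and converges smoothly as $t \to \infty$ to a flat connection. Since (\ref{ymf}) evolves connections on the \emph{fixed} bundle $P$ — no bubbling or change of bundle occurs in this small-Morrey-norm regime — the limiting flat connection lives on $P$ itself, which is exactly what is required.

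The only point deserving a word of care is the class of connections over which the infimum is taken: Theorem~\ref{thm:flatgap} is stated for smooth initial data, so if one wished the infimum to range over a larger space (say $H^k$ connections), one would first approximate a low-Morrey-norm connection by a smooth one, using density of smooth connections together with continuity of the $M^{2,4}$ norm; in the present setup, where $\calA_P$ denotes the smooth connections, this is automatic. Consequently there is no real obstacle at the level of this corollary — all of the difficulty is packaged inside Theorem~\ref{thm:flatgap} (and hence in the parabolic Morrey estimates of \cite{estimatespaper}, or alternatively in Hamilton's monotonicity formula \cite{hamiltonmonotonicity}), and the corollary follows in one line.
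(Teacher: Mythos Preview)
Your proof is correct and matches the paper's intent exactly: the corollary is stated as an immediate consequence of Theorem~\ref{thm:flatgap}, with no separate proof given, since both directions follow trivially once that theorem is in hand. Your remark about the class of connections is a reasonable clarification but not strictly necessary.
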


The corresponding result for maps between compact Riemannian manifolds is \cite[Corollary 1.4]{estimatespaper}, which 
is related to results of White \cite[Cor. 2 and 3 on pg. 128]{whiteinfima} on the infimum of the $L^{n}$-norm of the derivative on the space of maps. 

By contrast, work of Naito \cite{naito} on the existence of finite-time blowup for (\ref{ymf}) on spheres of dimension $n \geq 5$ implies that there does not in general exist a parabolic \(L^2\) gap. The method of \cite{naito} can also be found in prior work of Chen and Ding in the setting of harmonic map flow \cite[Theorem 1.1]{chending}. We recover the following version of Naito's result for arbitrary compact base manifolds, proved by Wang and Zhang \cite{wangzhangfinitetimeblowupymf}.

\begin{cor}[{\cite[Theorem 1.1]{wangzhangfinitetimeblowupymf}}]\label{cor:Naito}
Let $G$ be a compact Lie group. Supposing that $\pi_1(M)$ has no nontrivial representations in $G,$ there exists a constant \(\delta_2 > 0\), depending only on $G$ 
and the geometry of \(M\), as follows. Given any connection $A_0$ on a topologically nontrivial vector bundle \(E \to M\) with structure group $G,$ if 
\begin{equation}\label{smallym}
\YM(A_0) < \delta \leq \delta_2,
\end{equation}
then the maximal time of smooth existence of the solution of (\ref{ymf}) with $A(0) = A_0$ is less than $\delta_2^{-1}\delta^{\frac{2}{n-4}}$.
\end{cor}


\vspace{5mm}

\subsection{Acknowledgements} A.W. was partially supported by NSF DMS-2004661 during the preparation of this article. A.D. was supported by NSF DMS-2037851 during the preparation of this article.

\vspace{10mm}

\section{Preliminaries}\label{section:Preliminaries}

\subsection{Principal and vector bundles} Let $G$ be a compact Lie group and $\rho : G \to \GL_K(V)$ a faithful representation of dimension $r$ over $K = \R$ or $\C.$ We assume that $V$ is endowed with a (Hermitian) inner product preserved by $G,$ so that $\rho(G) < \O(r)$ if $K = \R$ and $\rho(G) < \U(r)$ if $K = \C.$ Let $\gothg$ denote the Lie algebra of $G$, which we identify with the space of left-invariant vector fields on \(G.\)

Let $M$ be a smooth manifold. Recall that a \emph{principal $G$-bundle over $M$} is a smooth fiber bundle $G \inj P \to M$ with fiber \(G\) and a right $G$-action \(P \times G \to P\) satisfying the following conditions. The action should be smooth, commute with the fiber bundle projection, and act freely and transitively on the fibers. Given a principal $G$-bundle $P \to M$ and a space $F$ on which $G$ acts on the left, we may form the \emph{associated fiber bundle}
$$P \times _G F = \{ P \times F \} / \left( x,y \right) \sim \left( x g, g^{-1} y \right).$$
We form the following associated fiber bundles:
\begin{enumerate}
\item The bundle of \emph{gauge transformations}:
$$\mathcal{G}_P = P \times_{\Ad} G.$$
By abuse of notation, we also refer to its sheaf of smooth sections by $\mathcal{G}_P;$ these correspond to local automorphisms of $P.$

\item The bundle of \emph{infinitesimal gauge transformations} (a.k.a. the \emph{adjoint bundle}):
$$\gothg_P = P \times_{\mathrm{Ad}} \gothg.$$
This is a vector bundle with fiber $\gothg.$

\item The vector bundle
$$E = P \times_\rho V,$$
which has rank $r$ over $K$ and inherits a bundle metric from the inner product on $V.$
\end{enumerate}

The maps $\rho : G \to \GL_K(V)$ and \(d \rho : \mathfrak{g} \to \mathfrak{gl}_K(V)\) yield embeddings of $\mathcal{G}_P$ and $\gothg_P,$ respectively, into $\End E = E \otimes_K E^*$. Hence, an (infinitesimal) gauge transformation of $P$ is equivalent to a section of $\End E$ that lies fiberwise in the image of $\rho(G)$ (resp. $\rho(\gothg)$). 
We endow both $\mathcal{G}_P$ and $\gothg_P$ with the induced metric from $\End E$; in particular, sections of the former are orthogonal (resp. unitary), while sections of the latter are skew-symmetric (resp. skew-Hermitian), for $K = \R$ or $\C,$ respectively.

The principal bundle $P$ can be reconstructed from the associated bundle $E$ by applying the inverse of the map $\rho$ to the transition functions. In fact, any real or complex vector bundle whose transition functions belong to the image of a representation of $G$ arises from the associated bundle construction for some principal \(G\)-bundle. We may therefore define a 
\emph{vector bundle with structure group $(G, \rho)$}, or \emph{$(G,\rho)$-bundle}, in the following two equivalent ways:

\begin{enumerate}

\item The associated bundle $E = P \times_\rho V$ for a principal $G$-bundle $P$ and $r$-dimensional real or complex representation, $V,$ or

\item A vector bundle of rank $r$ over $\R$ or $\C$ and a cover of \(M\) by trivializations for which the transition functions belong to $\rho(G).$

\end{enumerate}
The choice of representation $\rho$ is usually obvious; for example, a rank $r$ real vector bundle ``with structure group $\SO(r)$'' has all transition functions in the image of the standard rank $r$ real representation, while a rank $r$ complex vector bundle ``with structure group $\SU(r)$'' has all transition functions in the image of the standard rank $r$ complex representation.

In this paper, we prefer to work with vector rather than principal bundles, which entails no loss of generality since $G$ is compact. Henceforth, for a $(G,\rho)$-bundle $E,$ we will denote the images of $\mathcal{G}_P$ and $\gothg_P$ inside $\End E$ by $\mathcal{G}_E$ and $\gothg_E,$ respectively.


\vspace{5mm}

\subsection{Connections, curvature, and gauge transformations}

Recall that a connection on a principal \(G\)-bundle \(\pi : P \to M\) is a smooth distribution \(\mathcal{H} \subset TP\) which satisfies the following:
\begin{enumerate}
    \item \(TP \cong \mathcal{H} \oplus \ker{d\pi}\).
    \item For all \(g \in G\) and \(p \in P\), \((R_g)_\ast(\mathcal{H} \cap T_pP) = \mathcal{H} \cap T_{pg} P\).
\end{enumerate}
Given a tangent vector \(X\) at \(x \in M\) and a point \(p \in \pi^{-1}(x)\), there is a unique \(X^H \in \mathcal{H} \cap T_pP\), called the horizontal lift of \(X\), such that \(d\pi(X^H) = X\). \par 
A connection $A$ on \(P\) induces a covariant derivative \(\nabla_A\) on the vector bundle $E = P \times_\rho V \to M$ as follows. Let \(s\) be a smooth section of \(E\). Since \(G\) acts freely and transitively on the fibers of \(P\), \(s\) determines a unique, \(G\)-invariant section \(\tilde{s}\) of \(P \times V \to P\), which may equivalently be viewed as a \(G\)-equivariant function \(P \to V\). Given \(X \in T_xM\), \((d\tilde{s})(X_p^H)\) determines an element of \(\pi^{-1}(x) \times V\), and by \(G\)-equivariance, \((d\tilde{s})(X_p^H)\) lie in the same equivalence class of \(\pi^{-1}(x) \times_\rho V\) as \(p\) varies in \(\pi^{-1}(x)\). Hence we define \(\nabla_A\) by
\begin{align*}
    (\nabla_A s)(X) := [(d\tilde{s})(X^H)].
\end{align*} \par
Conversely, given a \((G, \rho)\)-bundle \(E\), suppose we have a covariant derivative $\nabla_A$ such that, in local frames for which the transition functions belong fiberwise to $\rho(G)$, the connection 1-forms take values in $d\rho(\gothg)$. We obtain a connection on \(P\) as follows. Let \(U\) be a trivialized neighborhood of \(x \in M\), and let \(X \in T_xM\). By abuse of notation, we denote the connection form with respect to the trivialization by \(A.\) Since \(\rho\) is faithful, we may define a horizontal vector at \((x, e) \in U \times G\) by \((X, (d\rho)^{-1}(A(X)) )\). We obtain a horizontal distribution over \(U \times G\) by right-translation, and since the transition functions of \(E\) belong to \(\rho(G)\) and \(\rho\) is faithful, this prescription glues together to give a connection on \(P\). Thus we refer to such covariant derivatives as \((G, \rho)\)-connections, and in the sequel we restrict our attention to \((G, \rho)\)-connections on \(E\) instead of principal connections on \(P\). \par 
The curvature of a connection \(A\) is a section \(F_A\) of \(\Lambda^2T^\ast M \otimes \mathfrak{g}_E\) defined locally by 
\begin{align*}
    F_A = dA + A \wedge A.
\end{align*} Given another \((G, \rho)\)-connection \(\tilde{A}\), whence \(a = \tilde{A} - A\) is a globally well-defined \(\mathfrak{g}_E\)-valued 1-form, we have
\begin{align*}
    F_{\tilde{A}} = F_A + D_Aa + a \wedge a,
\end{align*}
where $D_A$ denotes the covariant exterior derivative.
A gauge transformation \(u \in \Gamma(\mathcal{G}_E)\) acts on connections by the prescription \begin{align*}
    \nabla_{u(A)} s = u(\nabla_A(u^{-1}(s))).
\end{align*} Locally, the connection form transforms by
\begin{align}\label{preliminaries:connectiontransformationlaw}
    A \mapsto u A u^{-1} - du u^{-1},
\end{align} and the curvature transforms by
\begin{align}\label{preliminaries:curvatureisgaugeequivariant}
    F_{u(A)} = uF_Au^{-1}.
\end{align}

\vspace{5mm}

\subsection{The Yang-Mills functional}

Suppose now that $M$ is endowed with a Riemannian metric $g.$ We define the induced inner product on $k$-forms by requiring that
$$|e^1 \wedge \cdots \wedge e^k| = 1$$
for orthonormal covectors $e^1, \ldots, e^k.$ The Hodge star operator $* : \Omega^k \to \Omega^{n-k}$ is an isometry defined by
\[
\LA \alpha, \beta \RA \, dV = \alpha \wedge * \beta
\]
which extends linearly to forms valued in any vector bundle.
Combining this with the norm on $\End E,$ for $\omega, \eta$ (skew-symmetric/skew-Hermitian) sections of $\Omega^k(\gothg_E),$ we have
\begin{align}\label{preliminaries:forminnerproduct}
\LA \omega, \eta \RA \, dV = - \Tr \omega \wedge * \eta,
\end{align}
where $\Tr = \Tr_K.$ The Yang-Mills functional is defined by 
\begin{align}\label{preliminaries:ym}
   \mathcal{YM}(A) & = - \frac12 \int_M \Tr F_A \wedge * F_A \\
& = \frac12 \int_M |F_A|^2 \, dV . \nonumber
\end{align}

\vspace{5mm}

\subsection{The characteristic number}\label{ss:characteristicnumber}

We follow the convention of Donaldson-Kronheimer \cite[p. 42]{donkron} for the characteristic number $\kappa(E) \in \frac14 \Z$ of a real or complex vector bundle \(E\) over a closed, oriented 4-manifold:
\begin{align*}
    \kappa(E) = \begin{cases} \langle c_2(E) - \frac12 c_1(E)^2, [M]\rangle & K = \C \\
    -\frac14 \langle p_1(E), [M]\rangle & K = \R.
    \end{cases}
\end{align*}
The coefficient $-\frac14$ guarantees 
that for an $\text{SU}(2)$-bundle $E,$ 
we have $\kappa(E) = \kappa(\gothg_E)$ 
\cite[(2.1.37)]{donkron}. 
The number \(\kappa(E)\) has the property that it is an integer over the 4-sphere, according to Atiyah-Hitchin-Singer \cite[pg. 450]{ahs}.

According to Chern-Weil theory \cite[Appendix C]{milnorstasheff}, for any 
connection $A$ on $E,$ we have
\begin{align*}
    \kappa(E) = \frac{1}{8\pi^2 n_{K}} \int_M \Tr_K F_A \wedge F_A,
\end{align*}
where
\begin{align}\label{preliminaries:nK}
    n_{K} = \begin{cases} 1 & K = \C \\
    4 & K = \R.
    \end{cases}
\end{align} \par 
In four dimensions, the Hodge star operator \(\ast\) acts as an involution on 2-forms. Hence we have the pointwise orthogonal decomposition \(\Lambda^2T^\ast M = \Lambda_-^2 \oplus \Lambda_+^2\) into negative and positive eigenspaces of \(\ast\), and this decomposition extends to \(\mathfrak{g}_E\)-valued 2-forms. Since \(\ast F_A = F_A^+ - F_A^-\), we deduce from (\ref{preliminaries:forminnerproduct}) and (\ref{preliminaries:ym}) that
\begin{align*} 
    \YM(A) = 4 \pi^2 n_{K} \kappa(E) + \|F^+_A\|_{L^2}^2.
\end{align*} \par

Suppose now that \(M\) is a closed quaternion-K{\"a}hler manifold of dimension \(4m\). Let \(\Omega\) denote the Kraines form on \(M\) \cite{kraines}. Since \(d\Omega = 0\), it follows from Chern-Weil theory that the integral
\begin{align*}
    \int_M \text{Tr}_K(F_A \wedge F_A) \wedge \Omega^{m-1}
\end{align*} is a topological invariant of the bundle. A consequence is that for a pseudo-holomorphic connection \(A\), we have 
\begin{align}\label{preliminaries:qkchernweil}
    \mathcal{YM}(A) = \lambda(E) + c\|F^+\|_{L^2}^2,
\end{align} where \(\lambda(E)\) is a constant depending only on the bundle, and \(c\) is a constant depending only on \(m\) \cite[Lemma 2.3 (ii)]{oliveirawaldron}.
\begin{rmk}\label{rmk:gkscomparison}
Due to the different choices of norms, our convention for $\kappa(E)$ agrees with that of Gursky-Kelleher-Streets \cite{gurskykelleherstreets} in the complex case but differs by half in the real case. On the other hand, our convention for $\|F\|^2$ is half that of \cite{gurskykelleherstreets} in the complex case and agrees in the real case.
\end{rmk}

\vspace{5mm}

\subsection{Sobolev spaces of connections}
 Fix a smooth reference \((G, \rho)\)-connection \(\nabla_{\text{ref}}\) on $E.$ Let \(\nabla_{\text{ref}}\) also denote the induced connection on \(\mathfrak{g}_E\) as well as any tensor product connection obtained by coupling the Levi-Civita connection on a tensor bundle over \(M\) with \(\nabla_{\text{ref}}\) on \(E\) or \(\mathfrak{g}_E\). For \(p \in [1, \infty)\), we define the \(W^{k, p}\) Sobolev norm of a section of \(E, \mathfrak{g}_E,\) or any natural tensor product bundle, by
\begin{align}
    \|s\|_{W^{k, p}} := \left(\sum_{j = 0}^k \int_M |\nabla_{\text{ref}}^{(k)} s|^p \, dV \right)^{\frac{1}{p}}.
\end{align} When \(p = 2\), we abbreviate
\begin{align*}
    \|s\|_{H^k} &:= \|s\|_{W^{k, 2}} \\
    \|s\| &:= \|s\|_{L^2}.
\end{align*}

We next define the Morrey norm on a Riemannian manifold $M$ of dimension \(n\).
        \begin{defn}\label{preliminaries:morreynormdefinition}
		 Given \(q \in [1, \infty]\) and \(\lambda \in [0, n]\), the $M^{q, \lambda}$ \emph{Morrey norm} of a function \(f\) is given by
    \begin{align*}
        \|f\|_{q, \lambda} := 
        \begin{cases}
            \sup_{x \in M, 0 < R \leq 1} \left(R^{\lambda-n} \int_{B_R(x)} |f|^q \, dV\right)^{\frac{1}{q}} & q < \infty \\
            \|f\|_\infty & q = \infty.
        \end{cases}
    \end{align*} The Morrey norm of a section \(s\) of a bundle with fiber metric is just the Morrey norm of the function \(|s|\).
	\end{defn}  \par

To define the Sobolev topology on the space of connections $\calA_E,$ 
recall that the covariant derivative associated to $A$ can be written uniquely as
\[
\nabla_A = \nabla_{\text{ref}} + A
\]
for a \(\mathfrak{g}_E\)-valued 1-form $A$ on \(M\); abusing notation, we denote this 1-form by the same letter as the connection itself. The \(W^{k, p}\) norm of $A$ is simply defined as that of the corresponding 1-form. We may also define the space of Sobolev connections \(W^{k, p}(\calA_E)\) as the completion of the space of smooth connections $\calA_E$ with respect to the \(W^{k, p}\) norm. By compactness of \(M\), the ``Sobolev topology'' on this space is independent of the choice of a smooth reference connection. 
 \begin{rmk}\label{rmk:Hqcontrolsmorrey}
      For \(k \geq \frac{n}{2}-1\), Sobolev embedding implies \(H^{k-1} \inj L^p\) for some \(p \geq \frac{n}{2}\); we may also simultaneously replace both \(\geq\) by \(>\). On a compact manifold, H{\"o}lder's inequality implies that for such \(p\), the $M^{2, 4}$ Morrey norm is bounded above by a constant times the $L^p$ norm.
      Since the assignment \(A \mapsto F_A\) is a continuous map from \(H^k \to H^{k-1}\) for \(k \geq \frac{n}{2}-1\), it follows that the \(M^{2, 4}\) norm of the curvature is a continuous, gauge-invariant function on \(H^k(\mathcal{A}_E)\).
  \end{rmk}

The transformation law (\ref{preliminaries:connectiontransformationlaw}) implies that 
for $k > \frac{n}{2} - 1,$ the action of the gauge group $\mathcal{G}_E$ on $\calA_E$ extends continuously to the spaces of Sobolev connections \(H^k(\mathcal{A}_E)\) and gauge transformations \(H^{k+1}(\mathcal{G}_E)\). In fact, this is a smooth action by a Hilbert Lie group on a Hilbert manifold; see \cite[\textsection 6 and Appendix A]{freeduhlenbeck} or 
\cite[\textsection 4.2.1]{donkron}. \par 
Fix a point \(x_0 \in M\). Let $(\mathcal{G}_E)_{x_0}$ denote the fiber of the bundle of gauge transformations over $x_0.$ We define a framed connection to be an element of
\begin{align*}
    \tilde{\mathcal{A}}_E := \mathcal{A}_E \times (\mathcal{G}_E)_{x_0}.
\end{align*}The gauge group acts on \(\tilde{\mathcal{A}}_E\) by \(\sigma\cdot(A, u) = (\sigma(A), \sigma(x_0)u)\). Since \(H^{k+1}\) gauge transformations are continuous for \(k+1 > \frac{n}{2}\), we have a well-defined, smooth 
action of \(H^{k+1}(\mathcal{G}_E)\) on the \(H^k\) framed connections \(H^k(\tilde{\mathcal{A}}_E)\). Fixing $k > \frac{n}{2} - 1,$ we define the moduli space of framed connections by
$$\tilde{\mathcal{B}}_E:=H^k(\tilde{\mathcal{A}}_E)/H^{k+1}(\mathcal{G}_E).$$
This is a Banach manifold independent of \(k\), in the sense that for $\ell \geq k,$ the induced map \(H^\ell(\tilde{\mathcal{A}}_E)/H^{\ell+1}(\mathcal{G}_E) \to H^k(\tilde{\mathcal{A}}_E)/H^{k+1}(\mathcal{G}_E)\) 
is a weak homotopy equivalence \cite[\S 5.1.1]{donkron}. We also let $\calB_E = \tilde{\calB}_E / G$ denote the unframed moduli space of connections.

\vspace{5mm}

\subsection{Weitzenb{\"o}ck formulae} Recall that the adjoint \(D_A^\ast\) of the exterior covariant derivative, acting on \(\mathfrak{g}_E\)-valued \(k\)-forms, is given by
\begin{align*}
    D_A^\ast = (-1)^{n(k+1)+1} \ast D_A \ast.
\end{align*} We define the Hodge Laplacian \(\Delta_A\) by 
\begin{align*}
    \Delta_A = D_A^\ast D_A + D_A D_A^\ast.
\end{align*} The Hodge Laplacian and the connection Laplacian \(\nabla_A^\ast \nabla_A\) differ by algebraic terms involving the curvatures of \(A\) and \(g\). Let $R_{ijkl}, R_{ij},$ and $R$ respectively denote the Riemann, Ricci, and scalar curvature of $g.$ On \(\mathfrak{g}_E\)-valued 2-forms, we have from \cite[(4.6)]{oliveirawaldron} that
\begin{align*}
    (\Delta_A \omega)_{ij} = (\nabla_A^\ast \nabla_A \omega)_{ij} - [F_i^{\ k}, \omega_{kj}] + [F_j^{\ k}, \omega_{ki}] + R_{i \ j}^{\ k \ l}\omega_{lk} - R_{j \ i}^{\ k \ l}\omega_{lk} + R_i^{\ l}\omega_{lj} + R_j^{\ l}\omega_{il},
\end{align*}
 where we use the Einstein summation convention of summing over repeated indices. By the algebraic Bianchi identity,
 \begin{align*}
    R_{ikjl} = - R_{lijk} - R_{klji} = R_{jkil} + R_{ijkl},
\end{align*}
 so the Weitzenb{\"o}ck identity takes the form
\begin{align*}
    (\Delta_A \omega)_{ij} = (\nabla_A^\ast \nabla_A \omega)_{ij}
    - [F_i^{\ k}, \omega_{kj}] + [F_j^{\ k}, \omega_{ki}] - R_{ij}^{\ \ kl}\omega_{kl} + R_i^{\ l}\omega_{lj} + R_j^{\ l}\omega_{il}.
\end{align*} \par
We now specialize the general Weitzenb{\"o}ck formula to the quaternion-K{\"a}hler setting. On a quaternion-K{\"a}hler manifold of dimension \(4m\), Salamon \cite[Theorem 3.1]{salamonqkinventiones} showed that the Riemannian curvature tensor can be decomposed into a scalar multiple of the curvature tensor of \(\mathbb{HP}^m\) plus the curvature tensor of a hyperk{\"a}hler manifold. To be precise, let \({}^1 I, {}^2 I, {}^3 I\) be a local frame of 
\(\Lambda_+^2 \subset \Lambda^2T^\ast M\) satisfying the quaternion relations. Define the tensor
\begin{align*}
    R_{ijkl}^{\mathbb{HP}^m} := g_{il}g_{jk} -  g_{jl}g_{ik} + \sum_{\alpha = 1}^3 {}^\alpha I_{il}{}^\alpha I_{jk} - {}^\alpha I_{jl}{}^\alpha I_{ik} - 2 \ {}^\alpha I_{ij}{}^\alpha I_{kl}.
\end{align*} This represents the curvature tensor of \(\mathbb{HP}^m\) with scalar curvature \(16m(m+2)\). Thus by Salamon, we have
\begin{align*}
    R_{ijkl} = \frac{R}{16m(m+2)} R_{ijkl}^{\mathbb{HP}^m} + R_{ijkl}^{\text{hyp}},
\end{align*} where \(R_{ijkl}^{\text{hyp}}\) is a section of $\text{Sym}^2\left(\Lambda_-^2\right).$ 
Next we consider the action of \(R_{ijkl}^{\mathbb{HP}^m}\) on a form \(\omega \in \Omega_+^2.\) 
We compute
\begin{align*}
    R_{ij}^{\ \ kl} \omega_{kl} = \frac{R}{16m(m+2)}(-2\omega_{ij} + (-2 + 8m)\omega_{ij} + 4\omega_{ij}) = \frac{R}{2(m+2)} \omega_{ij}.
\end{align*} Therefore the Riemannian zeroth-order term in the Weitzenbock formula for \(\Omega_+^2(\mathfrak{g}_E)\) is
\begin{align*}
    \left(\frac{R}{4m} + \frac{R}{4m} -\frac{R}{2(m+2)}\right) \operatorname{id} = \frac{R}{m(m+2)}\operatorname{id}. 
\end{align*} In particular, suppose \(A\) is a pseudo-holomorphic connection. Since \([\mathfrak{sp}(m), \mathfrak{sp}(1)] = 0\) in \(\Lambda^2T^\ast M\), we have for \(\omega \in \Omega_+^2(\mathfrak{g}_E)\) that
\begin{align}\label{preliminaries:qkweitzenbock}
    (\Delta_A \omega)_{ij} = (\nabla_A^\ast \nabla_A \omega)_{ij} - [(F_A^+)_i^{\ k}, \omega_{kj}] + [(F_A^+)_j^{\ k}, \omega_{ki}] + \frac{R}{m(m+2)}\omega.
\end{align}

\vspace{5mm}

\subsection{Yang-Mills flow}
Given an initial connection $A_0,$ we wish to produce a solution $A(t)$ of (\ref{ymf}) on a maximal time interval. The gauge-invariance of the Yang-Mills functional \(\mathcal{YM}\) implies that the flow equation (\ref{ymf}), which is the downward gradient flow of \(\mathcal{YM}\), is not strictly parabolic. Nonetheless, we may follow the strategy of R{\aa}de \cite[Theorem 1]{rade}, based on DeTurck's first simplification\footnote{It is also possible to obtain well-posedness of the flow via the better-known Donaldson-DeTurck ``trick,'' see e.g. \cite{kozonomaedanaito4dglobal, ohtatarucaloric, struweym}, but obtaining the optimal result is no more straightforward than in R\aa de's setup.} \cite{deturckfirsttrick} of Hamilton's existence result for Ricci flow, to obtain a unique, short-time solution of (\ref{ymf}), as well as continuity of the solution with respect to the initial data in \(H^k(\mathcal{A}_E)\) for \(k > \lceil\frac{n}{2}-1\rceil\). The main well-posedness results are detailed in Theorem \ref{thm:wellposedness}. \par 
The key observation underpinning R{\aa}de's strategy is that if \(A(t)\) evolves by (\ref{ymf}), then the curvature evolves by the parabolic equation
\begin{align}\label{preliminaries:evolutionofcurvature}
    \frac{\partial F_A}{\partial t} + \Delta_A F_A = 0.
\end{align} Thus we obtain a solution of (\ref{ymf}) by solving for \((a(t), \Omega(t)) \in \Omega^1(\mathfrak{g}_E) \oplus \Omega^2(\mathfrak{g}_E)\) satisfying
\begin{align*}
    \frac{\partial a}{\partial t} &= -D_{\nabla_{\text{ref}} + a}^\ast \Omega \\
    \frac{\partial \Omega}{\partial t} &= -\Delta_{\nabla_{\text{ref}} + a} \Omega,
\end{align*} and then showing that if \(\Omega(0) = F_{\nabla_{\text{ref}} + a(0)}\), then \(\Omega(t) = F_{\nabla_{\text{ref}} + a(t)}\). This is explained in the short-time existence/uniqueness result, Theorem \ref{thm:shorttimeexistence}. \par
To prove the (neighborhood) deformation retraction statements of Theorems \ref{thm:fourspheregap}, \ref{thm:quaternionkahlergap}, and \ref{thm:flatgap}, we must \textit{a priori} work with Sobolev solutions of (\ref{ymf}), i.e., where \(A(t)\) is in \(H^k\). However, as we show in the appendix, any \(H^k\) solution is gauge-equivalent by a constant-in-time \(H^{k+1}\) gauge transformation to a smooth solution. Thus, in most instances we need only consider smooth solutions of (\ref{ymf}). \par 
Last, we note that the (neighborhood) deformation retractions of the previous three theorems are continuous on spaces of connections and not just on gauge-equivalence classes. To obtain convergence of solutions of (\ref{ymf}) and continuity of the infinite-time limit in \(H^k\) without modding out by gauge, we prove in the appendix a Sobolev distance estimate (Theorem \ref{thm:Hkestimates}) for solutions of (\ref{ymf}) with bounded curvature, as well as an \(H^{-1}\) {\L}ojasiewicz-Simon inequality in Lemma \ref{lemma:lojasiewicz}, cf. \cite[(9.1)]{rade}.

\vspace{10mm}

\section{Deformation to instantons on the 4-sphere}

\subsection{Optimal split $\eps$-regularity} We first give an optimal version of the split $\eps$-regularity estimate relied upon in \cite{instantons}.

\begin{defn}\label{defn:delta} Let $\calB_{S^4,G}$ denote the space of all gauge-equivalence classes of smooth connections carried by principal $G$-bundles $P \to S^4,$ including all topologies. 
Given a representation $\rho: G \to \GL_K(r),$ where $K = \R \text{ or } \C,$ define
\begin{equation*}
\delta_{G, \rho} = \inf \left\{ \frac{1}{4 \pi^2 n_{K}} \left\| F^+_{B_\rho} \right\|^2 \mid \LB B \RB \in \calB_{S^4,G}, D^*F_B = 0, F^+_B \not\equiv 0 \right\}.
\end{equation*}
Here, $B_\rho$ is the connection induced on the vector bundle $E = P \times_\rho V$ by the representation $\rho.$
\end{defn}
\begin{rmk}\label{rmk:deltaGrho} We make several comments on the foregoing definition.
\begin{enumerate}[itemsep=2mm]

\item Evidently, $\delta_{G, \rho}$ is the largest number such that the implication
\begin{equation*}
\begin{split}
D_A^* F_A = 0, \quad \|F^+_A\|^2 < 4 \pi^2 n_K \delta_{G, \rho} 
\quad \Rightarrow \quad F^+_A = 0
\end{split}
\end{equation*}
holds generally for connections $A$ on $(G, \rho)$-bundles $E \to S^4.$ In particular, if $\kappa(E) \geq 0,$ any Yang-Mills connection with
$ \YM(A) < 4 \pi^2 n_{K} \left( \kappa(E) + \delta_{G, \rho} \right) $ is anti-self-dual.

\item We have $\delta_{G, \rho} \geq \delta_{\SO(r), \rho_{std}}$ for $K = \R$ and $\delta_{G, \rho} \geq \delta_{\SU(r), \rho_{std}}$ for $K = \C.$ 
By the Min-Oo gap theorem, 
these are bounded below by a positive universal constant.

\item $\delta_{G, \rho}$ determines $\delta_{G, \rho'}$ for all other representations $\rho'$ by an algebraic constant; the difference comes only from the norm on $\rho(\gothg) \subset V \otimes V^*.$

\item
For $G = \SU(r)$ and the standard representation, according to \cite[Corollary 1.2]{gurskykelleherstreets} (see Remark \ref{rmk:gkscomparison}), we have
$$\delta_{\SU(r), \rho_{std}} = 2.$$
This corresponds to the general case $\gamma_1 = \frac{4}{\sqrt{3}}$ in the notation of \cite{gurskykelleherstreets}.

\item For $G = \SO(r)$ and the standard representation, we also have
$\delta_{\SO(r), \rho_{std}} = 1$ for $r \geq 4$ and $\delta_{\SO(3), \rho_{std}} = 2.$

\end{enumerate}
\end{rmk}

\begin{thm}\label{thm:strongepsreg}
Given \(\gamma > 0\), there exist \(k \in \N\) and a constant $C_{(\ref{thm:strongepsreg})} > 0$, which depends on \(\gamma \) and the structure group \((G, \rho)\), as follows. 

Let $B = B_1(0)$ be a geodesic ball for a smooth metric $g$ which is sufficiently close in $C^k$ to the Euclidean metric in normal coordinates. 
Suppose that \(A(t)\) is a smooth solution of (\ref{ymf}) on a $(G, \rho)$-bundle $E \to B$ for $t \in \left( -1, 0 \right)$, with 
    \begin{align}\label{strongepsreg:energybound}
        \sup_{-1 < t < 0} \YM(A(t)) \leq \gamma^{-1}
    \end{align}
    and
    \begin{align}\label{strongepsreg:GKSbound}
        \sup_{-1 < t < 0} \|F_{A(t)}^+\|_{L^2(B)}^2 \leq \delta \leq 4 \pi^2 n_{K} \delta_{G, \rho} - \gamma,
    \end{align}
    where self-duality and norms are defined by the metric $g.$
    Then 
    \begin{equation}\label{epsilonregularitybound}
        \sup_{B_{1/2} \times \LB -1/4, 0 \right)} | F_{A(t)}^+(x,t) | \leq C_{(\ref{thm:strongepsreg}) } \sqrt{\delta}.
    \end{equation}
\end{thm}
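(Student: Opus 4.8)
The plan is to argue by contradiction via a type-II blowup. Suppose the conclusion fails: then there exist a sequence of metrics $g_i \to g_{\mathrm{Euc}}$ in $C^k$, solutions $A_i(t)$ of (\ref{ymf}) on bundles $E_i \to B$ over $t \in (-1,0)$ satisfying (\ref{strongepsreg:energybound}) and (\ref{strongepsreg:GKSbound}) with $\delta_i \leq 4\pi^2 n_K \delta_{G,\rho} - \gamma$, yet with $\sup_{B_{1/2} \times [-1/4,0)} |F_{A_i}^+| / \sqrt{\delta_i} \to \infty$. Following the standard point-selection argument (Struwe/Schlatter, as used in \cite{instantons}), one selects points $(x_i, t_i) \in B_{1/2} \times [-1/4, 0)$ and scales $\lambda_i \to 0$ so that $\lambda_i^2 |F_{A_i}^+(x_i,t_i)| \to \infty$ — this is the crucial type-II feature, that the blowup rate of $F^+$ outpaces the natural parabolic scaling — while on the rescaled solutions $\tilde A_i(y,s) := A_i(x_i + \lambda_i y, t_i + \lambda_i^2 s)$ one has, say, $|F_{\tilde A_i}^+(0,0)| = 1$ and $|F_{\tilde A_i}^+| \leq 2$ on a parabolic neighborhood expanding to fill $\R^4 \times (-\infty, 0]$.

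Next I would extract a limit. The rescaled metrics converge to the flat metric on $\R^4$; the energy bound (\ref{strongepsreg:energybound}) is scale-invariant and persists; the full curvature $F_{\tilde A_i}$ need not stay bounded, but because $|F_{\tilde A_i}^+|$ is uniformly bounded and the full curvature satisfies the parabolic system (\ref{preliminaries:evolutionofcurvature}), one applies Uhlenbeck's compactness theorem together with local parabolic regularity (away from a bubbling set of finite energy) to pass to a limit $A_\infty$ on a bundle over $(\R^4 \setminus S) \times (-\infty, 0]$, where $S$ is a finite set. Because the type-II rescaling kills the $\partial_t$ term in the limit — the standard argument is that $\int |D^*F|^2$ integrated over the rescaled time interval tends to zero, so the limit is $t$-independent and Yang-Mills — $A_\infty$ is a static Yang-Mills connection on $\R^4$ with $\|F^+_{A_\infty}\|_{L^2}^2 \leq \delta \leq 4\pi^2 n_K \delta_{G,\rho} - \gamma$ and finite energy. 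By Uhlenbeck's removable singularity theorem it extends across $S$ and then conformally compactifies to a Yang-Mills connection on a $(G,\rho)$-bundle over $S^4$ with $\|F^+_{A_\infty}\|^2 < 4\pi^2 n_K \delta_{G,\rho}$. By Definition \ref{defn:delta} and Remark \ref{rmk:deltaGrho}(1), this forces $F^+_{A_\infty} \equiv 0$.

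The contradiction is then that $|F^+_{A_\infty}(0,0)| = 1$: one must ensure the normalization point $0$ survives in the limit, i.e. that $0 \notin S$ and that $|F^+_{\tilde A_i}(0,0)| \to |F^+_{A_\infty}(0,0)|$. This is where the point-selection must be done carefully — the selected $(x_i,t_i)$ should very nearly achieve the supremum of $|F^+|$ weighted against the parabolic distance to the boundary, so that no concentration of $F^+$ can occur at the origin of the rescaled picture, and $C^0$ convergence of $F^+$ near $(0,0)$ holds by interior parabolic estimates applied to (\ref{preliminaries:evolutionofcurvature}) once the full curvature is controlled in $L^2_{\mathrm{loc}}$. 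The main obstacle is precisely this bubbling/compactness step: one must simultaneously control the possibly-unbounded full curvature $F_{\tilde A_i}$ well enough to apply Uhlenbeck gauge fixing and extract a nontrivial limit, while arguing that the weak limit of $F^+$ does not lose mass at $(0,0)$; handling the finitely many energy-concentration points $S$ and verifying that the limiting connection genuinely lands in the class constrained by $\delta_{G,\rho}$ (rather than on a smaller bundle where the gap is vacuous) requires the conformal removable-singularity machinery and a careful accounting of how energy and $\|F^+\|^2$ behave under all the limiting operations.
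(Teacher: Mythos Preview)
Your overall strategy—contradiction via blowup, point selection on $|F^+|$, Uhlenbeck compactness plus removable singularities to produce a Yang–Mills connection on $S^4$, and then Definition~\ref{defn:delta} to force $F^+_{A_\infty}\equiv 0$—matches the paper's. But there is a genuine gap at the step where you assert that ``the type-II rescaling kills the $\partial_t$ term.'' The quantity $\iint |D^*F|^2\, dV\, dt$ is scale-invariant in dimension four: rescaling by $\lambda_i$ around $(x_i,t_i)$ identifies the rescaled unit parabolic cylinder with a cylinder of radius $\lambda_i$ in the original coordinates, and the integral of $|D^*F|^2$ over that shrinking region need not tend to zero—it can carry a fixed positive share of the total dissipation. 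So the limit need not be Yang–Mills, and without that you cannot invoke $\delta_{G,\rho}$. The paper supplies the missing idea via a pigeonhole: a preliminary lemma (Lemma~\ref{lemma:energynotsmall}) shows that if $|F^+(0,0)|$ is enormous then, by contraposition, the local $F^+$-energy must exceed $c^2$ at each of a geometric sequence of $\sim i$ dyadic times $t_j=-c^{4j}$; since $\int_{-1}^{0}\!\int_B |D^*F_i|^2\le C$, at least one of the corresponding intervals $I_{j_i}$ satisfies $\int_{I_{j_i}}\!\int|D^*F_i|^2\le 2C/i\to 0$. Point selection is then carried out \emph{inside that interval}, so that after rescaling the tension genuinely vanishes in the limit.

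You correctly flag the second obstacle—that $0$ might lie in the bubbling set $S$—but your proposed fix (pointwise convergence of $F^+$ at the origin via parabolic estimates) still needs good gauges near $0$, which is exactly what fails when the full curvature concentrates there. The paper sidesteps this: instead of tracking $|F^+(0,0)|$, it applies Lemma~\ref{lemma:energynotsmall} once more at the rescaled origin to get $\int_{B_{1/2}}|\tilde F^+_i|^2\ge c^2$, and observes that the uniform bound $|\tilde F^+_i|\le 4$ on $P_1$ (from point selection) makes the convergence of $|F^+|$ strong in $L^2$ on $B_1$ even across the finite set $S$. Hence $\int_{B_{1/2}}|F^+_{A_\infty}|^2\ge c^2$, contradicting $F^+_{A_\infty}\equiv 0$. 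Finally, the $\sqrt{\delta}$ in (\ref{epsilonregularitybound}) is obtained as a separate second step via Moser iteration, after the blowup argument has first produced a $\delta$-independent bound on $\sup|F^+|$.
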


The proof relies on the following two lemmas. In the sequel, we denote $F^+(t) := F_{A(t)}^+.$

\begin{lemma}\label{lemma:energynotsmall}
    There exists a constant $0 < c < \frac{1}{10}$, depending only on the structure group \((G, \rho)\), as follows. Given a solution $A(t)$ satisfying the assumptions of Theorem \ref{thm:strongepsreg}, if $0 < r \leq 1$ is such that
    \begin{equation}\label{energynotsmall:assn}
       \int_{B_{r}} \left|F^+\left( -(cr)^2 \right) \right|^2 \leq c^2,
    \end{equation}
    then
    \begin{equation}\label{energynotsmall:est}
        \sup_{ B_{r/2} \times \LB - \frac12 (cr)^2, 0 \right) } |F^+| \leq \frac{1}{cr^2}.
    \end{equation}
\end{lemma}
\begin{proof} 
By parabolic rescaling, we may assume without loss of generality that $r = 1.$ 
The assumption (\ref{energynotsmall:assn}) reads
\begin{equation}
    \int_{B} |F^+(-c^2)|^2 \leq c^2.
\end{equation} In the sequel, \(c\) may decrease in each appearance but will depend only on the structure group \((G, \rho)\). We also let $C > 0$ denote a constant depending only on $(G, \rho)$ which may increase in each appearance.
Recall that the curvature evolves by
\begin{align*}
    \partial_t F + D D^\ast F = 0.
\end{align*} Letting \(\varphi \in C_c^\infty(B)\), we take the inner product with \(\varphi^2 F^+\), integrate in space, and then integrate by parts. This yields
\begin{align*}
    \frac{1}{2}\frac{d}{dt} \int_B \varphi^2 |F^+|^2 &= \int_B \langle \varphi^2 F^+, \partial_t (F^+)\rangle = \int_B \langle \varphi^2 F^+, (\partial_t F)^+\rangle = \int_B \langle \varphi^2 F^+, \partial_t F\rangle \\
    &= -\int_B \langle \varphi^2 F^+, D D^\ast F\rangle \\
    &= -\int_B \varphi^2\langle D^\ast F^+, D^\ast F\rangle + 2\int_B \langle \nabla \varphi \intprod F^+,  \varphi D^\ast F\rangle.
\end{align*} Recall that \(D^\ast F = 2D^\ast F^+\). Hence we obtain
\begin{align*}
    \frac{d}{dt}\int_B \varphi^2 |F^+|^2 + \int_B \varphi^2 |D^\ast F|^2 &= 4\int_B  \langle  \varphi D^\ast F, \nabla \varphi \intprod F^+\rangle.
\end{align*}
The Cauchy-Schwarz and Young's inequalities imply
\begin{align*}
    \left|4\int_B \varphi \langle D^\ast F^+, (\nabla \varphi) \intprod F^+\rangle\right| \leq \frac{1}{2}\int_B \varphi^2 |D^\ast F|^2 + 8\int_B |\nabla \varphi|^2 |F^+|^2.
\end{align*} Integrating in time and rearranging yields 
\begin{align}\label{localenergyinequality}
    \int_B \varphi^2 |F^+(b)|^2 + \frac{1}{2}\int_a^b\int_B \varphi^2 |D^\ast F|^2 \leq \int_B \varphi^2 |F^+(a)|^2 + 8\int_a^b\int_B |\nabla \varphi|^2|F^+|^2.
    \end{align}
    Let \(\varphi\) be a cutoff for \(B_{3/4}.\) Since the metric is assumed to be close to Euclidean, we have $\|\nabla \varphi\|_\infty \leq C.$ Letting \(a = -c^2\) and \(b = 0\) in (\ref{localenergyinequality}), we obtain
    \begin{align}\label{localenergybound}
    \sup_{-c^2 \leq t \leq 0} \int_{B_{3/4}} |F^+(t)|^2 \leq c^2 + C \delta_{G, \rho} c^2 \leq C c^2.
    \end{align}
    For $c$ sufficiently small, it follows from standard $\eps$-regularity (Prop. 2.3 of \cite{instantons}) that 
    $$\sup_{B_{1/2} \times \LB -\frac12 c^2, 0 \right) }|F^+| \leq \frac{C c^2}{c^2} = C \leq \frac{1}{c}.$$
    Undoing the parabolic rescaling yields (\ref{energynotsmall:est}).
\end{proof}

We will use the following well-known point selection lemma. Denote 
\begin{align*}
    P_r(x, t) = B_r(x) \times [t - r^2, t],
\end{align*} where \(B_r(x)\) is a geodesic ball of radius \(r\) centered at \(x\). For the following lemma, we do not make any assumptions about the metric on \(B_r(x)\).

\begin{lemma}\label{lemma:comparablesup}
    Let \(\lambda > 0\) and let \(f\) be a non-negative continuous function on \(P_r(x_0, t_0)\) such that \(f(x_0, t_0) = \lambda^{-2} > 16r^{-2}\). Then there exists a spacetime point \((\tilde{x}, \tilde{t})\) such that, denoting \(\mu^{-2} := f(\tilde{x}, \tilde{t})\), we have \(\mu \leq \lambda\), \((\tilde{x}, \tilde{t}) \in P_{2\lambda}(x_0, t_0)\), and     \begin{align} \label{comparablesupbound}
\sup_{P_\mu(\tilde{x}, \tilde{t})} f \leq 4\mu^{-2}.
    \end{align}
\end{lemma}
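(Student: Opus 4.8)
The plan is to run the standard ``point-picking'' iteration: either the trivial bound $\sup_{P_\lambda(x_0,t_0)} f \le 4\lambda^{-2}$ already holds — in which case we are done with $(\tilde x,\tilde t)=(x_0,t_0)$ and $\mu=\lambda$ — or it fails, and we select a new point where $f$ is strictly larger, then repeat. More precisely, set $(x_1,t_1):=(x_0,t_0)$, $\lambda_1:=\lambda$. Having chosen $(x_j,t_j)\in P_{2\lambda}(x_0,t_0)$ with $f(x_j,t_j)=:\lambda_j^{-2}$ and $\lambda_j\le\lambda$, we ask whether
\[
\sup_{P_{\lambda_j}(x_j,t_j)} f \le 4\lambda_j^{-2}.
\]
If yes, stop and output $(\tilde x,\tilde t)=(x_j,t_j)$, $\mu=\lambda_j$, which gives \eqref{comparablesupbound}. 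If no, there is a point $(x_{j+1},t_{j+1})\in P_{\lambda_j}(x_j,t_j)$ with $f(x_{j+1},t_{j+1})>4\lambda_j^{-2}$; define $\lambda_{j+1}$ by $f(x_{j+1},t_{j+1})=\lambda_{j+1}^{-2}$, so $\lambda_{j+1}<\tfrac12\lambda_j$, and in particular $\lambda_{j+1}<2^{-j}\lambda$.

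The next step is to check that the iteration stays inside $P_{2\lambda}(x_0,t_0)$ and must terminate. For containment: since $(x_{j+1},t_{j+1})\in P_{\lambda_j}(x_j,t_j)$, the spatial geodesic distance satisfies $\dist(x_{j+1},x_0)\le\sum_{i=1}^{j}\lambda_i<\sum_{i\ge0}2^{-i}\lambda=2\lambda$, and similarly $0\le t_0-t_{j+1}\le\sum_{i=1}^j\lambda_i^2<\sum_{i\ge0}4^{-i}\lambda^2<2\lambda^2\le(2\lambda)^2$; here we use $\lambda\le r/4$, which follows from the hypothesis $f(x_0,t_0)=\lambda^{-2}>16r^{-2}$, to guarantee $2\lambda\le r/2<r$ so that all these balls and parabolic cylinders genuinely sit inside $P_r(x_0,t_0)$ where $f$ is defined. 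Thus every $(x_j,t_j)$ lies in the \emph{compact} set $\overline{P_{2\lambda}(x_0,t_0)}\subset P_r(x_0,t_0)$, on which the continuous function $f$ is bounded, say by $\Lambda$. But $f(x_j,t_j)=\lambda_j^{-2}>2^{2(j-1)}\lambda^{-2}\to\infty$, a contradiction unless the process halts after finitely many steps. When it halts at step $j$, the output $(\tilde x,\tilde t)=(x_j,t_j)\in P_{2\lambda}(x_0,t_0)$, $\mu:=\lambda_j\le\lambda$, $\mu^{-2}=f(\tilde x,\tilde t)$, and the stopping condition is exactly $\sup_{P_\mu(\tilde x,\tilde t)}f\le4\mu^{-2}$, which is \eqref{comparablesupbound}.

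I do not anticipate a serious obstacle here — this is a textbook Hamilton-type point-selection argument, and the only points requiring care are purely bookkeeping: (i) extracting $\lambda\le r/4$ from the stated inequality $\lambda^{-2}>16r^{-2}$ so that all the cylinders $P_{\lambda_j}(x_j,t_j)$ remain within the domain $P_r(x_0,t_0)$ of $f$ (this is where the seemingly arbitrary constant $16$ is used), and (ii) verifying the geometric-series estimates that keep everything inside $P_{2\lambda}(x_0,t_0)$ and force $\lambda_j\to 0$, contradicting the finite bound of $f$ on a compact set. The mild subtlety worth a sentence in the writeup is that when the metric on $B_r(x_0)$ is not Euclidean, geodesic balls still satisfy the triangle inequality for $\dist$, so the nesting $P_{\lambda_{j+1}}(x_{j+1},t_{j+1})\subset P_{\lambda_j}(x_j,t_j)$ used implicitly in the containment bound is legitimate; no curvature hypothesis is needed, consistent with the remark preceding the lemma.
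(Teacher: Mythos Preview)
Your proof is correct and takes essentially the same approach as the paper's --- the standard Hamilton-type point-picking iteration. The paper frames it as a proof by contradiction (assume no such point exists, build an infinite sequence in $\overline{P_{2\lambda}(x_0,t_0)}$ along which $f\to\infty$), whereas you iterate constructively until the stopping condition holds; the core argument, the geometric-series containment estimate, and the use of $\lambda<r/4$ to keep everything inside $P_r(x_0,t_0)$ are identical.
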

\begin{proof}
    Suppose for 
    contradiction that such a spacetime point does not exist. Then in particular the desired conclusion does not hold for \((x_0, t_0)\), so we have
    \begin{align*}
        \sup_{P_\lambda(x_0, t_0)} f > 4\lambda^{-2}.
    \end{align*} Assume by induction that we have found \(n+1\) spacetime points \((x_i, t_i)\), \(i = 0, \ldots, n\) satisfying
    the following conditions:
    \begin{align*}
       (\ast) \begin{cases}
            (x_{i+1}, t_{i+1}) \in \overline{P_{\mu_i}(x_i, t_i)} \ \ \ \text{ where } \mu_i^{-2} := f(x_i, t_i), \\
            f(x_{i + 1}, t_{i+1}) = \sup_{P_{\mu_i}(x_i, t_i)} f > 4\mu_i^{-2}.
        \end{cases}
    \end{align*} Note that \(\mu_{i+1} < 2^{-1}\mu_i\), and hence \(\mu_n < 2^{-n}\lambda\). We let \(d\) denote the parabolic distance \(d((p, t), (q, s)) = \max\{d_g(p, q), \sqrt{|t-s|}\}\), for which parabolic cylinders are metric balls. Observe that 
    \begin{align*}
        d((x_0, t_0), (x_n, t_n)) \leq \sum_{i = 0}^n d((x_{i+1}, t_{i+1}), (x_i, t_i)) \leq \sum_{i = 0}^n \mu_i < \lambda \sum_{i = 0}^n 2^{-i} < 2\lambda.
    \end{align*} Thus for \(0 \leq i \leq n-1\), \((x_i, t_i)\) is in \(P_{2\lambda}(x_0, t_0)\). Additionally, \(P_{\mu_i}(x_i, t_i)\) is compactly contained in \(P_r(x_0, t_0)\) since \(\lambda < r/4\). Our assumption that there is no spacetime point satisfying the conclusion of the lemma implies the existence of a spacetime point \((x_{n+1}, t_{n+1})\) satisfying \((\ast)\). Therefore, by induction we obtain an infinite sequence of spacetime points \(\{(x_i, t_i)\}\) satisfying \((\ast)\). Since this sequence is contained in the compact set \(\overline{P_{2\lambda}(x_0, t_0)}\) and since \(f\) is continuous, and yet \(\mu_i^{-2}\) tends to infinity, we reach a contradiction, as desired.
\end{proof}

\begin{proof}[Proof of Theorem \ref{thm:strongepsreg}]
We first derive the desired estimate without the $\delta$ on the right-hand side.

Suppose for contradiction that there exists a sequence of solutions $A_i$ on the unit parabolic cylinder (with metrics tending to Euclidean in $C^\infty$) such that
$$\sup_{B_{1/2} \times \LB -\frac14, 0 \RB} |F^+_{A_i}(x,t) | \geq c^{-4i-1},$$
where $c$ is the same constant from Lemma \ref{lemma:energynotsmall}. We abbreviate $F_i := F_{A_i}.$ After recentering and rescaling by a factor no less than $\frac12,$ we may assume
$$|F^+_i(0,0)| \geq c^{-4i-1}.$$
Let
$$t_j = -c^{4j}$$
and
$$I_j = \LB t_{j-1}, t_{j} \RB.$$
In view of (\ref{strongepsreg:GKSbound}) and the convergence of the metrics, (\ref{localenergyinequality}) implies that we can assume
$$\int_{P_1(0, 0)} |D_i^*F_i|^2 \, dV dt \leq C \delta.$$
Since the time-intervals $I_j$ are essentially disjoint, for each $i$ there exists $j_i \in \{\lceil \frac{i}{2} \rceil, \ldots, i\},$ such that
\begin{align}\label{tensiongoingtozero}
    \int_{B_1(0) \times I_{j_i}} |D_i^*F_{i}|^2 \, dV dt \leq \frac{2C \delta}{i}.
\end{align} Denote
\begin{align*}
    \tau_i := t_{j_i}.
\end{align*} By the contrapositive of Lemma \ref{lemma:energynotsmall}, we must 
have $$\int_{B_{c^{-1}\sqrt{-\tau_i}}(0)} |F^+_i(\tau_i)|^2 > c^2.$$
In particular, there exist 
points \(\tilde{x}_{i}\) in \(B_{c^{-1}\sqrt{-\tau_i}}(0)\) such that
\begin{equation}
\lambda_i^{-2} := |F^+_i(\tilde{x}_{i}, \tau_i)| > \frac{c^{3}}{C(-\tau_i)}.
\end{equation}
Denote $r_i^2 := (c^{-4}-1)(-\tau_i).$
Since for \(c\) sufficiently small (but still depending only on the structure group \((G, \rho)\))
\begin{align*}
    r_i^2\lambda_{i}^{-2} > r_i^2\frac{c^{3}}{C(-\tau_i)} = -\tau_i\left(\frac{1}{c^4}-1\right)\frac{c^{3}}{C(-\tau_i)} > 16,
\end{align*}
it follows that \(|F_i^+|\) satisfies the hypotheses of Lemma \ref{lemma:comparablesup} on $P_{r_i}(\tilde{x}_i, \tau_i).$ 
Thus there exists a spacetime point $(x_i, s_i) \in P_{2\lambda_i}(\tilde{x}_i, \tau_i)$
such that, denoting \(\mu_i^{-2} := |F_i^+(x_i, s_i)|\),
\begin{align}\label{F+comparablesup}
    &\sup_{P_{\mu_i}(x_i, s_i)} |F_i^+| \leq 4\mu_i^{-2}.
\end{align} Moreover, we have \(\mu_i \leq \lambda_{i} \leq \frac{r_i}{4}\), so
\begin{align*}
    P_{\mu_i}(x_i, s_i) \subset P_{\mu_i + 2\lambda_{i}}(\tilde{x}_i, \tau_i) \subset P_{r_i}(\tilde{x}_i, \tau_i). 
\end{align*} In particular, we must have
\begin{align*}
    [s_i - \mu_i^2, s_i] \subset [\tau_i - r_i^2, \tau_i] = I_{j_i}.
\end{align*}
By construction, $j_i \geq \lceil \frac{i}{2}\rceil.$ Thus
\begin{align*}
    \sqrt{-\tau_i} \leq \sqrt{c^{4\frac{i}{2}}} = c^i
\end{align*} and consequently
\begin{align*}
    \lambda_i \leq \frac{r_i}{4} = \frac{1}{4}\sqrt{(c^{-4}-1)(-\tau_i) } \leq c^{i-2}.
\end{align*} Therefore
\begin{align*}
    d_g(x_i, 0) \leq d_g(x_i, \tilde{x}_i) + d_g(\tilde{x}_i, 0) \leq 2\lambda_i + c^{-1}\sqrt{-\tau_i} 
    \leq c^{i-3} \to 0
\end{align*}
as $i \to \infty.$
Hence, for \(i\) large enough, we deduce that
\begin{align}\label{containedwhereD*Fsmall}
    P_{\mu_i}(x_i, s_i) \subset B_{1/2}(x_i) \times [s_i-\mu_i^2, s_i] \subset B_1(0) \times I_{j_i}.
\end{align}\par
 We now parabolically rescale \(B_{1/2}(x_i) \times [s_i-\mu_i^2, s_i]\) about \((x_i, s_i)\) by the factor \(\mu_i\). This yields a sequence of solutions of (\ref{ymf}), denoted by \(\tilde{A}_i\), on \(B_{(2\mu_i)^{-1}}(0) \times [-1, 0]\), where the metric on \(B_{(2\mu_i)^{-1}}(0)\) is approaching the Euclidean metric in \(C_\text{loc}^\infty\). In view of the bounds (\ref{strongepsreg:energybound}) and (\ref{tensiongoingtozero}) and the containment (\ref{containedwhereD*Fsmall}), we may apply standard Uhlenbeck compactness theory (see e.g. \cite[Theorem 1.3 and \textsection 5]{waldronuhlenbeck}) as follows. After passing to a subsequence of the \(\tilde{A}_i\), there exists a finite collection of points \(y_k\) in \(\R^4\), a finite-energy smooth connection \(A_\infty\) on a \((G, \rho)\)-bundle over \(\R^4 \setminus \{y_k\}\),  and bundle maps \(u_i\) defined on an exhaustion of \(\R^4 \setminus \{y_k\}\), such that \(u_i(\tilde{A}_i)\) converges to a Yang-Mills connection \(A_\infty\) in \(C_{\text{loc}}^\infty(\R^4\setminus \{y_k\} \times (-1, 0))\). 
 
Writing \(\tilde{F}_i^+ = F_{u_i(\tilde{A}_i)}^+\),
    note that \begin{align}\label{capturedenergy}
    |\tilde{F}_i^+(0, 0)| = 1,
\end{align}
so in particular by Lemma \ref{lemma:energynotsmall}, we have
\begin{equation*}
\int_{B_{1/2}} \left| \tilde{F}_i^+ \left( -\frac12 \right) \right|^2 \geq c^2.
\end{equation*}
Meanwhile, (\ref{F+comparablesup}) implies
\begin{align}\label{blowupbounded}
    \sup_{P_1(0, 0)} |\tilde{F}_i^+| \leq 4.
\end{align}
Therefore our convergence is strong in $L^2$ on $B_1(0),$ and we still have
\begin{equation}\label{strongepsreg:F+Ainftynonzero}
\int_{B_{1/2}} |F_{A_\infty}^+ |^2 \geq c^2
\end{equation}
in the limit. \par
 On the other hand, by Uhlenbeck's removable singularity theorem, we may extend the bundle in question over the point at infinity to obtain a smooth Yang-Mills connection \((\tilde{E}, A_\infty)\) on the 4-sphere. It follows from (\ref{strongepsreg:GKSbound}), conformal invariance of the energy under the rescaling, Fatou's lemma, and (\ref{strongepsreg:F+Ainftynonzero}) that
 $$c^2 \leq \|F_{A_\infty}^+\|^2 \leq 4\pi^2 n_{K} \delta_{G, \rho} - \gamma.$$
 But then the implication of Remark \ref{rmk:deltaGrho}(1) gives $F^+_{A_\infty} \equiv 0,$ which is a contradiction. 
 
 We have established the desired bound (\ref{epsilonregularitybound}) without \(\delta\) on the right-hand side.
    To obtain the bound with \(\delta\), we proceed as follows. Here \(C_{(\ref{thm:strongepsreg})}\) may increase from line to line but will keep the same dependencies. The bound without \(\delta\) implies
    \begin{align*}
        \sup_{P_{3/4}(0, 0)} |F^+| \leq C_{(\ref{thm:strongepsreg})}.
    \end{align*} Hence the self-dual curvature satisfies the differential inequality
    \begin{align*}
        (\partial_t - \Delta) |F^+| \leq C_{(\ref{thm:strongepsreg})}|F^+|.
    \end{align*} Parabolic Moser iteration (see \cite[Lemma 19.1]{ligeomanalysis}) then yields the bound
    \begin{align*}
        \sup_{P_{1/2}(0, 0)} |F^+| \leq C_{(\ref{thm:strongepsreg})}\sqrt{\delta},
    \end{align*} as desired.
\end{proof}

\vspace{5mm}

\subsection{Proof of Theorem \ref{thm:fourspheregap}} 
We will use the following result, which is a version of the central estimate of \cite[Theorem 2.5]{instantons}.

\begin{thm}[Cor. 7.3 of \cite{oliveirawaldron}]\label{thm:F+boundimpliesFbound}
    Let \(M\) be a closed Riemannian \(4\)-manifold, and suppose \(A(t)\) is a solution of (\ref{ymf}) on \([0, T)\). Let \(0 \leq t_1 \leq t_2 < T\), and set
    \begin{align*}
       E_0 &:= \|F(0)\|^2 \\
       K &:= \int_{t_1}^{t_2} \|F^+(t)\|_\infty \, dt.
    \end{align*} There exist positive constants \(C\), which is universal, and  \(R_0\), depending on \(E_0\) and the geometry of \(M\), such that
    \begin{align}\label{F+boundimpliesFbound:bound}
        \|F(t_2)\|_\infty \leq \max\left\{R_0^{-2}, e^{C\max\left\{1, K \right\}}\|F(t_1)\|_\infty \right\}.
    \end{align}
\end{thm}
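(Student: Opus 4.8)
\noindent\emph{Proposed strategy.} The bound \eqref{F+boundimpliesFbound:bound} is a quantitative form of the long-time existence theorem for Yang--Mills flow over closed $4$-manifolds, so the plan is to run that argument with the optimal split $\varepsilon$-regularity of Theorem~\ref{thm:strongepsreg} as the engine, tracking constants. The structural fact making $\|F^+\|_\infty$ the right quantity is a Weitzenb\"ock cancellation: along \eqref{ymf} the self-dual curvature obeys a \emph{closed} inequality $(\partial_t-\Delta)|F^+|^2\le -2|\nabla_A F^+|^2+C|F^+|^3+C_{\mathrm{Rm}}|F^+|^2$, the cubic term involving $F^+$ alone (the cross term $F^+\#F^-$ in the Weitzenb\"ock curvature term vanishes because $\Lambda^2_+$ and $\Lambda^2_-$ commute as subalgebras of $\mathfrak{so}(4)$); hence $\|F^+(t)\|_\infty$ stays finite on $[t_1,t_2]$, bounded in terms of $\|F^+(t_1)\|_\infty$ and the geometry alone.

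First I would reduce to a one-step estimate. Fix the geometric scale $R_0=R_0(E_0,M)$ below which the plain parabolic $\varepsilon$-regularity of \cite{instantons} applies given total energy $E_0$, and partition $[t_1,t_2]$ into consecutive pieces $[s_j,s_{j+1}]$ with $\int_{s_j}^{s_{j+1}}\|F^+\|_\infty\,dt=\eta_0$ for a small $\eta_0=\eta_0(G,\rho)$ (plus a shorter terminal piece), so that there are $\lesssim\max\{1,K\}/\eta_0$ pieces. It then suffices to prove: on each piece, either $\sup|F|$ already lies below $R_0^{-2}$ (whence it stays $O(R_0^{-2})$ by $\varepsilon$-regularity at scale $R_0$), or $\|F(s_{j+1})\|_\infty\le e^{C\eta_0}\|F(s_j)\|_\infty$. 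Chaining the pieces and adding exponents yields \eqref{F+boundimpliesFbound:bound}.

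The one-step bound I would prove by contradiction and type-II rescaling. If it fails along a sequence $A_i$, apply the point-selection Lemma~\ref{lemma:comparablesup} to $|F_{A_i}|$ (as in the proof of Theorem~\ref{thm:strongepsreg}) to get points $(x_i,s_i)$ and scales $\mu_i\to0$ for which the rescaled solutions $\tilde A_i$ satisfy $|\tilde F_i|(0,0)=1$, $\sup_{P_1(0,0)}|\tilde F_i|\le4$, and $\int_{P_1}|D^\ast_{\tilde A_i}\tilde F_i|^2\to0$ (Struwe's argument, as around \eqref{tensiongoingtozero}); since the bound fails by a diverging factor, the backward time interval on which $\tilde A_i$ is defined has length $\to\infty$, so the limit is eternal. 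As $\YM$ decreases along \eqref{ymf} the energies stay $\le E_0$, so Uhlenbeck compactness and removable singularities produce, modulo gauge and away from finitely many points, a smooth, finite-energy, \emph{static} (hence Yang--Mills) connection $A_\infty$ over $S^4$, nontrivial since $|F_{A_\infty}|(0)=1$. Because $\int\|F^+\|_\infty\,dt$ is scale-invariant, the eternal limit satisfies $\int_{\R}\|F^+_{A_\infty}\|_\infty\,dt\le\eta_0$, forcing $F^+_{A_\infty}\equiv0$: $A_\infty$ is a nontrivial anti-self-dual connection on $S^4$.

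The main obstacle is to convert ``$A_\infty$ is a nontrivial anti-self-dual bubble'' into a contradiction --- i.e.\ to rule out that such a bubble forms. The key is that concentration of $F^-$ is driven by $F^+$ (the flow is $\partial_t A=-2D^\ast_A F^+$, so configurations near a concentrated anti-self-dual connection are nearly stationary), and the rigorous form of this is Theorem~\ref{thm:strongepsreg}: since $F^+_{A_\infty}\equiv0$ and, by $\sup_{P_1}|\tilde F^+_i|\le4$, the convergence is strong in $L^2$ on small balls, the self-dual $L^2$-energy near the would-be bubble eventually drops below the \emph{optimal} gap $4\pi^2 n_K\delta_{G,\rho}$; the split estimate then pins $|\tilde F^+_i|$ to $O(\sqrt\delta)$, $\delta\to0$, near the concentration point, and a careful analysis of the flow near anti-self-dual configurations (the content of \cite{instantons}) shows the remaining anti-self-dual curvature cannot have concentrated at the rate producing $|\tilde F_i|(0,0)=1$ over a time interval carrying self-dual $L^\infty$-mass only $\eta_0$. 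I expect this step --- and the bookkeeping interlocking the self-dual maximum principle, the sharp split $\varepsilon$-regularity, and the point-selection, in particular controlling $\|F^+\|_\infty$ independently of $\|F\|_\infty$ --- to be where the optimality of the constant in Theorem~\ref{thm:strongepsreg} is genuinely needed, and where the explicit factor $e^{C\max\{1,K\}}$ (rather than mere long-time existence) is obtained.
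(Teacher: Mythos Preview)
The paper does not prove this theorem: it is quoted verbatim from \cite[Cor.~7.3]{oliveirawaldron}, itself a version of the central estimate \cite[Theorem~2.5]{instantons}. So there is no ``paper's own proof'' to compare to; your proposal is an attempt to re-derive a cited result.

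Your approach has a genuine gap at the decisive step. The blowup--contradiction argument produces a nontrivial anti-self-dual connection $A_\infty$ on $S^4$, and you correctly identify that this is \emph{not} a contradiction in itself: instantons on $S^4$ exist in abundance. Your proposed resolution---``a careful analysis of the flow near anti-self-dual configurations (the content of \cite{instantons}) shows the remaining anti-self-dual curvature cannot have concentrated at the rate producing $|\tilde F_i|(0,0)=1$ over a time interval carrying self-dual $L^\infty$-mass only $\eta_0$''---is precisely the statement of the one-step bound you are trying to prove. The assertion ``small $\int\|F^+\|_\infty\,dt$ prevents $\|F\|_\infty$ from growing'' \emph{is} Theorem~\ref{thm:F+boundimpliesFbound}; invoking \cite{instantons} for it is circular. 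A type-II blowup that lands on a nontrivial instanton is exactly the scenario this theorem is designed to exclude, and nothing upstream of the theorem excludes it.

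There are secondary problems as well. Your claim that the rescaled limit is eternal (hence $F^+_{A_\infty}\equiv 0$ from $\int\|F^+\|_\infty\,dt\le\eta_0$) is not justified: the rescaled backward time interval has length $\mu_i^{-2}(s_i-s_j)$, and you have no control on $s_i-s_j$ relative to $\mu_i^2$. And Theorem~\ref{thm:strongepsreg} is a red herring here: its conclusion bounds $|F^+|$, not $|F|$, and the \emph{optimal} constant $\delta_{G,\rho}$ plays no role---the cited result predates Theorem~\ref{thm:strongepsreg} and uses only the crude split $\varepsilon$-regularity of \cite[Prop.~2.3]{instantons}.

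For orientation: the proof in \cite{instantons} is \emph{direct}, not by contradiction. Your Weitzenb\"ock observation that the evolution of $F^+$ decouples from $F^-$ is the correct starting point (and, symmetrically, the curvature term in the evolution of $F^-$ involves only $F^-$, since $\Lambda^2_+$ and $\Lambda^2_-$ commute). But the passage from control of $F^+$ to control of $F$ does not go through a pointwise differential inequality for $|F|$; rather, it uses the non-optimal split $\varepsilon$-regularity---smallness of $\int_{B_R}|F^+|^2$ together with the global energy bound $E_0$ yields a pointwise bound on the \emph{full} curvature---and then tracks, via the local energy inequality for $F^+$ (as in Lemma~\ref{lemma:energynotsmall}), how long this smallness persists at the scale set by $\|F(t_1)\|_\infty$. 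Your partitioning of $[t_1,t_2]$ into pieces with $\int\|F^+\|_\infty\,dt=\eta_0$ is in the right spirit; the one-step bound, however, is proved by this direct continuity-of-regularity-scale argument, not by blowup.
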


\begin{thm}[Cf. Theorem \ref{thm:fourspheregap}]\label{thm:fourspheregeneralgap}
Let \(k \geq 2\), and let $E \to S^4$ be a $(G, \rho)$-bundle. Suppose \(A_0\) is an \(H^k\) connection on \(E\) satisfying
\begin{equation}\label{fourspheregeneralgap:energylessthandelta}
\YM(A_0) < 4 \pi^2 n_{K}\left( |\kappa(E)|+ \delta_{G, \rho} \right).
\end{equation}
Here $\kappa(E)$ is defined in \S \ref{ss:characteristicnumber} and $\delta_{G, \rho}$ is given in Definition \ref{defn:delta}.
Then the solution \(A(t)\) of (\ref{ymf}) with \(A(0) = A_0\) exists for all time and converges in \(H^k\) exponentially as \(t \to \infty\) to an instanton on $E$. Moreover, the map $A_0 \mapsto \lim_{t \to \infty} A(t)$ for \(A_0\) satisfying (\ref{fourspheregeneralgap:energylessthandelta}) is homotopic to the identity relative to the 
space of instantons.
\end{thm}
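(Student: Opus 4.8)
\emph{Step 1 (reductions and an a priori bound on $F^+$).} The plan is to run the Yang-Mills flow from $A_0$, show it converges exponentially to an instanton, and then use the reparametrized flow as the required homotopy. After reversing the orientation of $S^4$ if necessary, we may assume $\kappa(E)\ge 0$, so that instantons on $E$ are the anti-self-dual connections and Chern-Weil theory gives $\YM(A)=4\pi^2 n_K\,\kappa(E)+\|F_A^+\|^2$ for every connection. By the result of the appendix it suffices to treat smooth $A_0$, transporting the conclusion back to $H^k$ by a fixed $H^{k+1}$ gauge transformation. Set $\gamma:=4\pi^2 n_K\big(\kappa(E)+\delta_{G,\rho}\big)-\YM(A_0)>0$. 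Since $\YM(A(t))$ is nonincreasing along (\ref{ymf}), on the maximal interval of existence
\[
\|F^+_{A(t)}\|_{L^2(S^4)}^2 \;=\; \YM(A(t))-4\pi^2 n_K\,\kappa(E) \;\le\; 4\pi^2 n_K\,\delta_{G,\rho}-\gamma .
\]
Covering $S^4$ by finitely many balls on which a fixed parabolic rescaling of $g$ is $C^k$-close to Euclidean, and applying the split $\eps$-regularity estimate of Theorem \ref{thm:strongepsreg} on each, we obtain a bound $\sup_{S^4}|F^+_{A(t)}|\le\Lambda$ valid for all $t$ beyond some fixed $T_0$, with $\Lambda$ depending only on $(G,\rho)$, $\gamma$, and the geometry.

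\emph{Step 2 (long-time existence and smooth convergence).} Feeding the uniform bound on $F^+$ into the curvature estimate of \cite{instantons} (Theorem \ref{thm:F+boundimpliesFbound}) and invoking the long-time existence theory of \cite{lte, instantons}, the flow exists for all time and subconverges, modulo gauge, along $t_i\to\infty$ to a Yang-Mills connection on $S^4$ together with at most finitely many bubbles. The bound on $F^+$ is decisive here: parabolic rescaling at a would-be bubble drives $|F^+|\to 0$, so no concentration of $F^+$ can occur, and combined with the curvature estimate this rules out bubbling of the full curvature once the energy lies below the gap; hence the subconvergence is smooth and takes place on $E$. Writing $A_\infty$ for the limit, we have $\YM(A_\infty)\le\YM(A_0)<4\pi^2 n_K(\kappa(E)+\delta_{G,\rho})$ with $A_\infty$ Yang-Mills on $E$, so the gap implication of Remark \ref{rmk:deltaGrho}(1) forces $F^+_{A_\infty}=0$: the limit is an instanton.

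\emph{Step 3 (exponential convergence).} On $S^4$ the Weitzenb{\"o}ck formula for self-dual two-forms kills the obstruction space of an instanton, so the instanton moduli space is a smooth manifold on which $\YM$ is nondegenerate in the transverse directions; consequently the {\L}ojasiewicz-Simon inequality of Lemma \ref{lemma:lojasiewicz} holds near $A_\infty$ with the optimal exponent $\tfrac12$. Combined with the subconvergence of Step 2, this promotes it to genuine convergence $A(t)\to A_\infty$, and together with the Sobolev distance estimate of Theorem \ref{thm:Hkestimates} yields $\|A(t)-A_\infty\|_{H^k}\le Ce^{-ct}$, with $C,c$ locally uniform in $A_0$ and without passing to gauge-equivalence classes; in particular $A_0\mapsto A_\infty(A_0)$ is continuous.

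\emph{Step 4 (the homotopy).} Let $U\subset H^k(\calA_E)$ be the set of connections satisfying (\ref{fourspheregeneralgap:energylessthandelta}) and let $\scrI\subset U$ be the instantons. Define $\Phi:U\times[0,1]\to U$ by $\Phi(A_0,s)=A\big(\tfrac{s}{1-s}\big)$ for $s<1$ and $\Phi(A_0,1)=A_\infty(A_0)$, where $A(t)$ denotes the solution with $A(0)=A_0$. Monotonicity of $\YM$ keeps $\Phi$ inside $U$; well-posedness of the flow (R{\aa}de; cf. Theorem \ref{thm:wellposedness}) gives joint continuity on $U\times[0,1)$; and the locally uniform exponential convergence of Step 3 gives continuity across $s=1$. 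Since an instanton is a stationary point of (\ref{ymf}), $\Phi(A_0,\cdot)\equiv A_0$ for $A_0\in\scrI$, so $\Phi$ is a homotopy, rel $\scrI$, from the identity to the map $A_0\mapsto A_\infty$---indeed a strong deformation retraction of $U$ onto $\scrI$. The main obstacle is Step 2, namely ruling out bubbling at $t=\infty$, for which the optimal constant in Theorem \ref{thm:strongepsreg} and the curvature estimate of \cite{instantons} are both essential.
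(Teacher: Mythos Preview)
Your overall strategy matches the paper's, but Step~2 contains a genuine gap. A uniform sup bound $|F^+|\le\Lambda$ fed into Theorem~\ref{thm:F+boundimpliesFbound} only yields $\|F(t_2)\|_\infty\le e^{C\Lambda(t_2-t_1)}\|F(t_1)\|_\infty$, i.e.\ at most exponential growth of the full curvature---enough for long-time existence, but \emph{not} for ruling out infinite-time bubbling. Your observation that any bubble would be anti-self-dual (since rescaling kills the bounded $F^+$) is correct, but this is precisely why the argument fails: ASD bubbles carry no $F^+$-energy, so nothing you have written excludes $A(t)$ shedding one or more ASD instantons as $t\to\infty$ while the body converges to an ASD connection on a bundle of strictly smaller charge. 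The phrase ``once the energy lies below the gap'' does not help here, since the energy identity $\YM(A_\infty')+\sum 4\pi^2 n_K k_j=4\pi^2 n_K\kappa(E)$ is perfectly consistent with bubbling when everything is ASD.

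The paper closes this gap by reversing the order of your Steps~2 and~3. After long-time existence and Uhlenbeck subconvergence, the sup bound on $|F^+|$ forces strong $L^2$ convergence of $|F^+|$ to zero (since the limit is ASD by the gap), whence $\|F^+(t)\|\to 0$. The Weitzenb\"ock formula on $S^4$---used directly, not via \L ojasiewicz---then gives exponential decay of $\|F^+(t)\|$ once it is small, and $\eps$-regularity upgrades this to exponential decay of $\|F^+(t)\|_\infty$. Only now does Theorem~\ref{thm:F+boundimpliesFbound} yield a \emph{uniform} bound on $\|F\|_\infty$ (since $K=\int_1^\infty\|F^+\|_\infty\,dt<\infty$), which rules out bubbling and places the limit on the original bundle. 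Exponential $H^k$ convergence then follows from Theorem~\ref{thm:Hkestimates} together with the bound $\int_{t_1}^\infty\|D^*F\|\,dt\le Ce^{-ct_1}$.

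A secondary issue: in Step~4 you assert that the exponential constants $C,c$ are locally uniform in $A_0$ and use this for continuity at $s=1$. This is not justified; the time at which $\|F^+\|$ becomes small enough for the Weitzenb\"ock argument to kick in need not depend continuously on $A_0$. The paper instead invokes the general infinite-time continuity result (Theorem~\ref{thm:continuityatinfinitetime}), which handles this via the \L ojasiewicz inequality and the fact that instantons are local minimizers.
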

\begin{proof}
Define the set
\begin{align*}
    \calN := \{A_0 \in H^k(\mathcal{A}_E) \mid A_0 \text{ satisfies } (\ref{fourspheregeneralgap:energylessthandelta})\},
\end{align*}which is open in the \(H^k\) topology. Given \(A_0\ \in \calN\), item (\ref{wellposedness:gaugeequivalenttosmooth}) of Theorem \ref{thm:wellposedness} implies that there exists an \(H^{k+1}\) gauge transformation \(u\) such that \(u(A(t))\) is a smooth solution of (\ref{ymf}) for \(t > 0\). Thus if \(u(A(t))\) exists for all time and converges smoothly to an instanton, \(A(t)\) will exist for all time and converge in \(H^k\) to an instanton. Thus to prove the first part of the theorem, we assume that \(A(t)\) is smooth. \par 
Because $\mathcal{YM}(A(t))$ is nonincreasing, the assumption (\ref{fourspheregeneralgap:energylessthandelta}) is preserved. 
There exists a universal \(R_0 > 0\) such that for any point \(x \in S^4\), rescaling \(B_{R_0}(x)\) by the factor \(R_0\) yields a ball satisfying the metric hypothesis of Theorem \ref{thm:strongepsreg}, whence for \(t > 0\)
\begin{align}\label{fourspheregeneralgap:selfdualsupisbounded}
    |F^+(x, t)| \leq \frac{4\pi C_{(\ref{thm:strongepsreg})}\sqrt{\delta_{G, \rho}}}{\min\{1, t\}}.
\end{align} It then follows from (\ref{F+boundimpliesFbound:bound}) and item (\ref{wellposedness:blowupcharacterization}) of Theorem \ref{thm:wellposedness} that \(A(t)\) exists for all time. Moreover, the full curvature blows up at most exponentially at infinite time. We will now argue that the solution in fact converges smoothly. \par 

Applying Uhlenbeck's compactness and removable-singularity theorems as in the proof of Theorem \ref{thm:strongepsreg}, we may extract an Uhlenbeck limit \(A_\infty'\) from the sequence of connections $A(n),$ $n \in \N,$ which is a Yang-Mills connection on a possibly different \((G, \rho)\)-bundle over $S^4.$ By Fatou's lemma and the global energy inequality, we have  
$$\left\|F^+_{A_\infty'}\right\|^2 \leq \|F^+(0)\|^2 < 4\pi^2n_{K}\delta_{G, \, \rho}.$$
It now follows from the implication of item (1) of Remark \ref{rmk:deltaGrho} that $A_\infty'$ must be anti-self-dual:
\begin{align}\label{fourspheregeneralgap:UhlenbecklimitisASD}
    F^+_{A_\infty'} \equiv 0.
\end{align}

However, the Weitzenb{\"o}ck formula implies that anti-self-dual connections on $S^4$ have vanishing self-dual cohomology; it therefore follows directly from \cite[Corollary 3.8]{instantons} that $A(t)$ converges exponentially in the smooth topology. For purposes of exposition, we provide the following simpler argument in the present case.

     In the sequel, \(C\) denotes a positive universal constant which may increase from line to line.
     The bound (\ref{fourspheregeneralgap:selfdualsupisbounded}) implies that the Uhlenbeck convergence \(\left| F^+(n_j) \right| \to \left| F_{A_\infty'}^+ \right| \) is strong in \(L^2\). By (\ref{fourspheregeneralgap:UhlenbecklimitisASD}), \(\|F^+(n_j)\| \to 0\), so \(\|F^+(t)\| \to 0\) since the self-dual energy is decreasing. Now, the round metric on \(S^4\) has vanishing Weyl tensor and positive scalar curvature. Hence the Weitzenb{\"o}ck formula for self-dual forms implies
        \begin{align*}
         \frac{1}{2}\frac{d}{dt} \|F^+\|^2 &= - (\nabla^\ast \nabla  F^+, F^+) + (\llbracket F^+, F^+\rrbracket, F^+) - 4 \|F^+\|^2 \\
        &=  -\| \nabla  F^+\|^2 + \left(\llbracket F^+, F^+\rrbracket, F^+ \right) - 4\|F^+\|^2.
    \end{align*} By Cauchy-Schwarz and \cite[Lemma 2.30]{bourguignonlawson},
    \begin{align*}
        \left( \llbracket F^+, F^+\rrbracket, F^+ \right) \leq C \|F^+\|_4^2\|F^+\|.
    \end{align*}By Kato's inequality and the Sobolev inequality on functions,
    \begin{align*}
        \|F^+\|_4^2 \leq C(\|\nabla F^+\|^2 + \|F^+\|^2),
    \end{align*} where \(C\) is independent of the reference connection (see e.g., \cite[(1.16)]{instantons}). Since \(\|F^+(t)\| \to 0\) as \(t \to \infty\), there is some  time \(\tau_0 > 1\) such that for \(t \geq \tau_0\) we have
    \begin{align}\label{fourspheregeneralgap:selfdualenergysmall}
        \|F^+(t)\| < \min\left\{\frac{1}{4C},  1\right\},
    \end{align}whence
    \begin{align*}
        \frac{d}{dt} \|F^+\|^2 \leq -6\|F^+\|^2.
    \end{align*} By Gronwall's inequality, we have for \(t \geq \tau_0\)
    \begin{align}\label{fourspheregeneralgap:selfdualexponentialdecay}
        \|F^+(t)\| \leq \|F^+(\tau_0)\|e^{-3(t - \tau_0)} \leq e^{-3(t-\tau_0)}.
    \end{align} Then  (\ref{fourspheregeneralgap:selfdualexponentialdecay}) and $\varepsilon$-regularity for \(F^+\) yield
    \begin{align}\label{fourspheregeneralgap:F+exponentialdecay}
        \|F^+(t)\|_\infty \leq Ce^{-3(t-\tau_0)}, \ \ \ t \geq \tau_0 + 1.
    \end{align} Thus \(\|F^+(t)\|_\infty\) is in \(L^1([\tau_0 + 1, \infty))\), so it follows from (\ref{F+boundimpliesFbound:bound}) that
    \begin{align}\label{fourspheregeneralgap:fullcurvaturesupbound}
        \sup_{t \geq \tau_0 + 2} \|F(t)\|_\infty < \infty.
    \end{align}Note from this bound that the Uhlenbeck convergence \(A(n_j) \to A_\infty'\) is in fact smooth convergence modulo gauge, and \(A_\infty'\) is a connection on the original bundle.
    As in \cite[Theorem 3.7]{instantons}, we have for \(t > t_1 \geq \tau_0\)
    \begin{align}\label{fourspheregeneralgap:rhosmall}
        \int_{t_1}^t \|D^\ast F(s)\| \, ds \leq Ce^{-3(t_1 - \tau_0)}.
    \end{align} It then follows from (\ref{fourspheregeneralgap:fullcurvaturesupbound}) and Theorem \ref{thm:Hkestimates} that as \(t \to \infty\), \(A(t)\) converges exponentially in \(H^k\) for all \(k\) to some smooth instanton \(A_\infty\). It also follows that \(A_\infty\) is smoothly gauge equivalent to \(A_\infty'\) \cite[\textsection 2.3.7]{donkron}. \par 
     Finally, we explain the deformation-retraction property of the flow. Given a connection $A$ and $t \geq 0,$ let $\varphi_A(t)$ denote the solution of (\ref{ymf}) starting at $A$ evaluated at time $t.$ By the first part of the theorem, we may define the following map
    \begin{align*}
        \Psi &: \calN \times [0, 1] \to \calN \\
        \Psi(A, s) &:= 
        \begin{cases}
            \varphi_A\left(\frac{s}{1-s}\right) & 0 \leq s < 1 \\
            \lim_{t \to \infty} \varphi_A(t) & s = 1.
        \end{cases}
    \end{align*} Continuity of \(\Psi\) at \((A, s)\) for \(s \in [0, 1)\) follows from item (\ref{wellposedness:wellposedness}) of Theorem \ref{thm:wellposedness}. Continuity at \((A, 1)\) follows from Theorem \ref{thm:continuityatinfinitetime}, since instantons are in particular local minimizers of the Yang-Mills energy. Therefore, \(\Psi\) is continuous, and since the instantons are fixed under (\ref{ymf}), \(\Psi\) is a deformation retraction. Finally, the Sobolev multiplication theorem implies that the space of \(H^{k+1}\) gauge transformations acts smoothly on the space of \(H^k\) connections. Since this action preserves \(\calN\) and commutes with the mapping \(A \mapsto \varphi_A(\cdot)\), \(\Psi\) descends to the quotient of \(\calN\) by the action of \(H^{k+1}\) gauge transformations, as desired.
\end{proof}

\begin{rmk}
   Recall that for \(\CP^2\) with the Fubini-Study metric, there are no ASD instantons on the \(SU(2)\) bundle with \(c_2 = 1\) \cite[4.1.3, Example (iii)]{donkron}. Hence, there is no parabolic gap. This is in contrast to the elliptic \(L^2\) gap of Huang \cite{huangkahlersurfaces} for positive scalar curvature K{\"a}hler surfaces.
\end{rmk}

\vspace{5mm}

\subsection{Path-connectedness}\label{ss:pathconnected} We now describe our shortcut in Taubes's proof of his path-connectedness theorem. 

Fix $r \geq 2$ and let $\tilde{\mathcal{B}}_k$ denote the moduli space of framed connections on the charge-$k$ $\SU(r)$-bundle on $S^4,$ and let $\calM_k \subset \tilde{\calB}_k$ denote the moduli space of framed instantons. Taubes's proof has two separate ingredients, Theorems 1.3 and 1.4 in \cite{taubespathconnected}. The first contains Taubes's Lusternik-Schnirelman theory for the Yang-Mills functional,\footnote{It is interesting to note that the Lusternik-Schnirelman part of Taubes's proof also relies on a flow (different from Yang-Mills flow) to give a deformation retraction from a smaller neighborhood onto the space of instantons. See \cite[Proposition 3.1]{taubespathconnected}.} while the second contains his key trick for proving path-connectedness. 
We can replace 
the Lusternik-Schnirelman arguments entirely with Theorem \ref{thm:fourspheregap} and also simplify one step in the proof of the second result. 

\begin{thm}[Taubes \cite{taubespathconnected}, Theorem 1.4]\label{thm:taubespathconnected}
Suppose that for every $1 \leq k' < k,$ $\calM_{k'}$ is path-connected. Then the subset
\begin{equation}\label{leq2}
\calN_k = \{ \LB A \RB \in \tilde{\calB}_k \mid \YM(A) < 4 \pi^2 \left( k + 2 \right) \}
\end{equation}
is also path-connected.
\end{thm}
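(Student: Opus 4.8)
The plan is to carry out Taubes's proof of his Theorem~1.4 \cite{taubespathconnected}, inducting on $k$, with its two analytic ingredients --- his Lusternik--Schnirelman deformation (Theorem~1.3 of \cite{taubespathconnected}) and his index estimates \cite{taubesstability} --- replaced, respectively, by Theorem~\ref{thm:fourspheregap} and by the Gursky--Kelleher--Streets gap constant $\delta_{\SU(r),\rho_{std}} = 2$ of Remark~\ref{rmk:deltaGrho}(4). The base case is the path-connectedness of $\calM_1$, valid for every $r$ since the framed charge-one moduli space on $S^4$ is an explicit connected manifold. For the inductive step, fix $k \ge 2$, suppose $\calM_{k'}$ is path-connected for all $1 \le k' < k$, and fix a ``model'' configuration $[A_\star] \in \calN_k$: a Taubes-grafted connection obtained from the trivial connection by gluing in $k$ highly concentrated, widely separated, standard charge-one instantons. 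It is enough to join an arbitrary $[A] \in \calN_k$ to $[A_\star]$ by a path inside $\calN_k$. (Everything can be done $\SU(r)$-equivariantly, so working with framed connections is harmless.)

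First, since $\YM(A) < 4\pi^2(k+2)$, hypothesis \eqref{fourspheregap:energylessthan8pi2} of Theorem~\ref{thm:fourspheregap} holds verbatim once one notes $\delta_{\SU(r),\rho_{std}} = 2$; hence Yang--Mills flow yields a path in $\calN_k$ from $[A]$ to an instanton $[\hat A] \in \calM_k$. This is the step replacing Taubes's internal appeal to his Theorem~1.3, and --- the GKS constant being available for all $r$ --- it is also where his index bounds, hence the restriction $r \in \{2,3\}$, become unnecessary. It now suffices to join an arbitrary instanton $[\hat A] \in \calM_k$ to $[A_\star]$ within $\calN_k$, crucially \emph{without} assuming $\calM_k$ connected.

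For this one follows Taubes's gluing/degeneration argument. Using the conformal action of $\SO(5,1)$ on $S^4$ together with Uhlenbeck's compactness and removable-singularity theorems (much as in the proof of Theorem~\ref{thm:strongepsreg}), one constructs a path inside $\calN_k$ from $[\hat A]$ into a neighborhood of some Uhlenbeck boundary stratum $\calM_{k-j}\times\Sym^j(S^4)$, $1 \le j \le k$; near such a stratum Taubes's grafting provides a continuous family of charge-$k$ connections $I_{k-j}\, \#_{(p_i,\lambda_i)}\, (j \text{ standard charge-one instantons})$, of energy $4\pi^2 k + o(1) < 4\pi^2(k+2)$, one of which is joined by the path to $[\hat A]$. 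Since $I_{k-j} \in \calN_{k-j}$ and $k-j < k$, the inductive hypothesis (applied through this theorem at charge $k-j$, and using that $\calM_{k-j}$ is connected) joins $I_{k-j}$ to the charge-$(k-j)$ model configuration within $\calN_{k-j}$; grafting this path back in, with the remaining $j$ bubbles held fixed, small, and mutually disjoint, and checking that the energies add up to remain strictly below $4\pi^2(k+2)$, produces a path in $\calN_k$ from $[\hat A]$ to a configuration that differs from $[A_\star]$ only in the positions, scales, and frames of $k$ concentrated charge-one instantons. A final path moves this gluing data into the standard one; it exists because the parameter space --- $k$ copies of the connected space of framed charge-one instantons, together with positions on $S^4$ and small scales --- is path-connected. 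Stringing the pieces together shows $\calN_k$ is path-connected.

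The main obstacle is the gluing/degeneration in the third paragraph: establishing, with genuine $H^k$-continuity of all the paths (rather than mere Uhlenbeck convergence along subsequences) and with a priori control keeping $\YM$ strictly below $4\pi^2(k+2)$ throughout, that a charge-$k$ instanton can be continuously pulled toward a prescribed Uhlenbeck stratum and that the lower-charge piece can then be unbubbled. This is exactly the analysis in Taubes's original proof, which we import wholesale; the only new inputs are Theorem~\ref{thm:fourspheregap} in place of Taubes's flow and the GKS gap value in place of his index estimates --- the former being ``the one step'' that the present treatment simplifies. A persistent technical point is that all grafting must be realized with explicit smooth cutoffs and scale parameters, so that the paths constructed are continuous into $H^k(\tilde{\calA}_E)$, as required for a statement about the Banach manifold $\tilde{\calB}_k$.
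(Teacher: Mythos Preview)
Your outline is correct and follows the same strategy as the paper: Taubes's \S6 argument, with Theorem~\ref{thm:fourspheregap} substituted for the Lusternik--Schnirelman step (Taubes's Prop.~6.4). The presentation differs in two ways. First, the paper connects two arbitrary instantons $m_0, m_1 \in \calM_k$ directly rather than routing through a model configuration; the reduction from $\calN_k$ to $\calM_k$ is implicit in the deformation retraction of Theorem~\ref{thm:fourspheregap}. Second, instead of describing the degeneration heuristically via ``Uhlenbeck compactness and the conformal action,'' the paper cites Taubes's explicit constructions --- the subtraction procedure (Prop.~6.2), the gluing of a $\calN_{k-1}$-path with a $\calM_1$-path (Lemma~6.5), and instanton/anti-instanton cancellation (Lemma~6.6) --- and pinpoints Prop.~6.4 as the sole step obviated. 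Taubes only peels off one instanton at a time ($j = 1$ in your notation), so the lemmas being invoked are specifically for that case. Your third-paragraph appeal to Uhlenbeck compactness is the one imprecise spot, since compactness gives subsequential limits rather than continuous paths; but as you explicitly defer to Taubes for this analysis --- which is exactly what the paper does too --- there is no genuine gap.
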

\begin{proof} We adopt Taubes's notation and recall the proof in \cite[\S 6]{taubespathconnected}, while obviating the use of \cite[Prop. 6.4]{taubespathconnected}. 


Let $m_0, m_1 \in \calM_k.$ By \cite[Lemma 6.3]{taubespathconnected}, we may assume without loss of generality that both $m_0$ and $m_1$ have nonvanishing anti-self-dual curvature 
at the south pole $s \in S^4.$ In this case,
according to \cite[Prop. 6.2]{taubespathconnected}, it is possible to choose $b_0, b_1 \in \mathcal{M}_1$ such that $\YM((m_i - b_i)_\rho) < 4 \pi^2 \left( k + 1 \right)$
for $i = 0,1$; in particular, we have
$$(m_i - b_i)_\rho \in \calN_{k-1}.$$
Here, $(m_i - b_i)_\rho,$ $i = 0,1,$ are the connections of charge $k -1$ obtained by Taubes's ``subtraction'' procedure, and $\rho > 0$ is a small scale parameter. By hypothesis, $\calM_{k-1}$ is path-connected, so by Theorem \ref{thm:fourspheregap}, $\calN_{k-1}$  is as well. Hence, there exists a path $\gamma(t) \in \calN_{k-1},$ $t \in \LB 0,1 \RB,$ with $\gamma(i) = (m_i - b_i)_\rho$ for $i = 0,1.$

Next, since $\calM_1$ is path-connected (by hypothesis), we can choose $\phi(t) \in \calM_1$ with $\phi(i) = b_i$ for $i = 0,1.$ According to \cite[Lemma 6.5]{taubespathconnected}, 
for $r$ sufficiently small, the glued path
$$\psi(t):= \left( \gamma(t) - \alpha \phi(t) \right)_r$$
has energy less than $(k + 1) + 1 = k + 2,$ so lies entirely within $\calN_k.$ Here, $\alpha$ is the inversion map across the equator of $S^4$. Its endpoints are
$$\psi(i) = \left( (m_i - b_i)_\rho - \alpha t_\rho b_i \right)_r,$$
for $i = 0,1,$ where $t_\rho$ denotes pullback under dilation by the factor $\rho.$

Finally, by \cite[Lemma 6.6]{taubespathconnected}, it is possible to connect each endpoint $\psi(i)$ to $m_i$ within $\calN_k$ by ``cancelling'' $b_i$ against $t_\rho \alpha b_i,$ while again staying entirely within $\calN_k.$ The result is a path connecting $m_0$ and $m_1$ within $\calN_k,$ as desired.
\end{proof}

\begin{proof}[Proof of Theorem \ref{thm:pathconnected}] First note that $\calM_1$ 
is indeed path-connected---see the proof of \cite[Proposition 6.2]{taubespathconnected} on p. 381. Since $\calM_k \simeq \calN_k$ by our Theorem \ref{thm:fourspheregap}, path-connectedness of $\calM_k$ now follows from Theorem \ref{thm:taubespathconnected} by induction.
\end{proof}

\begin{cor}[Atiyah-Drinfeld-Hitchin-Manin \cite{adhm}]\label{cor:adhm} For $r = 2,$ the space $\calM_k$ consists only of instantons arising from ADHM data. 
\end{cor}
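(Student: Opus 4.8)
The plan is to combine Taubes's connectedness theorem with the ``elementary half'' of the ADHM correspondence, the part that does not rely on the vanishing theorems underlying completeness. Recall that the ADHM construction furnishes a smooth map $\alpha$ from the space $\mathfrak{X}_k$ of nondegenerate charge-$k$ ADHM data satisfying the ADHM equation, modulo its residual symmetry group, into the framed moduli space $\calM_k$; by the standard linear-algebraic analysis (see \cite{adhm} and \cite[\S 3]{donkron}), $\mathfrak{X}_k$ is a smooth manifold of the same dimension as $\calM_k$, and $\alpha$ is an immersion, so its image $\calM_k^{\mathrm{ADHM}} := \operatorname{im}\alpha$ is open in $\calM_k$. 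It is nonempty (e.g.\ it contains the charge-$k$ 't~Hooft instantons). Since $\calM_k$ is connected by Theorem \ref{thm:pathconnected}, it then suffices to show that $\calM_k^{\mathrm{ADHM}}$ is \emph{closed}, because a nonempty subset that is simultaneously open and closed in a connected space must be the whole space.

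To prove closedness I would invoke the reverse construction $\beta$, which assigns ADHM data to an \emph{arbitrary} charge-$k$ anti-self-dual $\SU(2)$-connection on $S^4$, via the kernel of the coupled Dirac operator or, equivalently, via the monad cohomology of the associated holomorphic bundle on $\CP^3$. The properties I would use, all part of the ``easy'' direction of \cite{adhm} and requiring no vanishing theorem, are: for every such connection the output of $\beta$ is automatically nondegenerate and solves the ADHM equation; $\beta$ varies continuously in the $C^\infty$ topology; and $\beta \circ \alpha = \mathrm{id}$ on $\mathfrak{X}_k$. Granting these, suppose $[A_n] = \alpha(x_n) \to [A_\infty]$ in $\calM_k$. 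Because the convergence takes place inside a fixed-charge moduli space, no energy escapes in the limit, so $x_n = \beta(\alpha(x_n)) = \beta(A_n) \to \beta(A_\infty) =: x_\infty \in \mathfrak{X}_k$; by continuity of $\alpha$ we obtain $\alpha(x_\infty) = \lim_n \alpha(x_n) = [A_\infty]$, hence $[A_\infty] \in \calM_k^{\mathrm{ADHM}}$. This yields closedness, and connectedness then forces $\calM_k^{\mathrm{ADHM}} = \calM_k$, which is the corollary.

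I expect the main difficulty to be organizational rather than analytical: one must be scrupulous to use only the easy identity $\beta \circ \alpha = \mathrm{id}$ and never the completeness identity $\alpha \circ \beta = \mathrm{id}$, since the latter is precisely the statement being re-proved. Concretely, the classical facts that still demand careful verification are (i) the local smooth structure and dimension count making $\alpha$ an open immersion, and (ii) that $\beta$ is globally defined, with nondegenerate and continuously varying output, on all of $\calM_k$. As a bonus, once $\calM_k^{\mathrm{ADHM}} = \calM_k$ is known, the injective immersion $\alpha$ between equidimensional manifolds with closed image is automatically a diffeomorphism, so one recovers the full ADHM correspondence, not merely surjectivity.
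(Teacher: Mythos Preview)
Your overall strategy---show that $\calM_k^{\mathrm{ADHM}}$ is nonempty, open, and closed in the connected space $\calM_k$---is exactly the paper's, and your openness argument via an equidimensional immersion matches as well.

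The difference is in the closedness step. The paper never introduces the reverse map $\beta$; it simply quotes from Donaldson--Kronheimer \cite[\S 3.4.2--4]{donkron} that $\calM'_k$ is already a \emph{closed} submanifold of the ambient space $\tilde{\calB}_k$ of dimension $8k$, hence a fortiori closed in $\calM_k \subset \tilde{\calB}_k$. Combined with the Atiyah--Hitchin--Singer dimension count for $\calM_k$, this gives open-and-closed immediately, and the circularity issue you worry about never arises.

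Your route via $\beta$ is not unreasonable, but you underestimate the difficulty you flag at the end. The claim that $\beta(A)$ is \emph{nondegenerate} for an arbitrary instanton $A$ is not part of the ``easy'' direction: it is precisely the vanishing-theorem content underlying completeness. Once $\beta(A)$ is known to lie in $\mathfrak{X}_k$, forming $\alpha(\beta(A))$ and identifying it with $A$ is short; so asserting global nondegeneracy of $\beta$ on $\calM_k$ is essentially asserting the corollary. If you want to rescue your argument without circularity, you should instead argue that the \emph{limit} $x_\infty$ of the nondegenerate $x_n$ remains nondegenerate because no charge is lost (your ``no energy escapes'' remark)---but that is a properness statement for $\alpha$, which is closer to what Donaldson--Kronheimer actually prove and does not require $\beta$ to be defined off of $\calM_k^{\mathrm{ADHM}}$ at all.
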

\begin{proof} Let $\calM'_k \subset \calM_k$ denote the subset of instantons that arise from the ADHM construction (see \cite[Ch. II]{atiyahgeometry} for an elementary description). It is established in Donaldson-Kronheimer \cite[\S 3.4.2-4]{donkron} that $\calM'_k$ is a closed submanifold of $\tilde{\calB}_k$ of dimension $8k.$ Meanwhile, according to the Atiyah-Hitchin-Singer Theorem \cite{ahs}, the space $\calM_k$ is also a closed submanifold of $\tilde{\calB}_k$ of the same dimension. Therefore $\calM'_k$ is both closed and open relative to $\calM_k;$ since $\calM_k$ is connected, the two must be equal. 
\end{proof}

\vspace{10mm}

\section{Deformation to instantons on quaternion-K\"ahler manifolds}

We first relate the \(M^{2, 4}\) norm to the monotone 
quantity \(\Phi\) from \cite[Def. 5.5]{oliveirawaldron}, which we now recall.  Let \(\varphi \in C_c^\infty([0, 1))\) be a standard cutoff for \([0, \frac{1}{2}]\), let \(x, x_1 \in M\), and let
$$\rho_1 = \min\{\text{inj}(M), 1\}.$$
Set \(\varphi_{x_1, \rho_1}(y) := \varphi\left(\frac{d(x_1, y)}{\rho_1}\right)\), where \(d\) is the Riemannian distance. For a function \(f\) on \(M\), we set
    \begin{align*}
        \Phi_{x_1, \rho_1}(f, R, x) := \frac{R^{4-n}}{(4\pi)^{\frac{n}{2}}} \int_M |f(y)|^2 e^{-\frac{d(x, y)^2}{4R^2}} \varphi_{x_1, \rho_1}(y) \, dV(y).
    \end{align*}
\begin{lemma}\label{lemma:morreycontrolsmonotonicity}
    There exists \(C_{(\ref{morreycontrolsmonotonicity:bound})} \geq 1\), depending only on the geometry of \(M\), such that for \(R \in (0, 1], q \in [2, \infty],\) and \(\lambda \in [0, n]\), we have
    \begin{align}\label{morreycontrolsmonotonicity:bound}
        \Phi_{x, \rho_1}(f, R, x) \leq C_{(\ref{morreycontrolsmonotonicity:bound})} R^{4-\frac{2\lambda}{q}} \|f\|_{q, \lambda}^2.
    \end{align}
\end{lemma}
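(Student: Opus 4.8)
The plan is to estimate the Gaussian integral defining $\Phi_{x,\rho_1}(f,R,x)$ by a dyadic decomposition of $M$ into the central ball $B_R(x)$ and the annuli $A_j := B_{2^jR}(x)\setminus B_{2^{j-1}R}(x)$ for $j\geq 1$, and to bound the contribution of each piece using H\"older's inequality together with the definition of the Morrey norm (Definition \ref{preliminaries:morreynormdefinition}). First I would note that the cutoff $\varphi_{x,\rho_1}$ is supported in $B_{\rho_1}(x)\subseteq B_1(x)$, so only those $j$ with $2^{j-1}R<\rho_1\leq 1$, hence $2^jR<2$, contribute; in particular every relevant geodesic ball has radius at most $2$, so the volume comparison $|B_s(x)|\leq C_g\,s^n$ for $s\leq 2$ (with $C_g$ depending only on the geometry of $M$) applies throughout. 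On $A_j$ with $j\geq 1$ one has $e^{-d(x,y)^2/4R^2}\leq e^{-4^{j-1}/4}$, while on $B_R(x)$ one uses $e^{-d(x,y)^2/4R^2}\leq 1$.

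Next, for each $j\geq 0$, H\"older's inequality with exponents $q/2$ and its conjugate gives
$$\int_{A_j}|f|^2\,dV\;\leq\;|A_j|^{\,1-2/q}\Big(\int_{A_j\cap\,\supp\varphi_{x,\rho_1}}|f|^q\,dV\Big)^{2/q}.$$
The first factor is at most $C_g(2^jR)^{n(1-2/q)}$. For the second, if $2^jR\leq 1$ the Morrey bound applied on $B_{2^jR}(x)$ yields $\int_{A_j}|f|^q\leq (2^jR)^{n-\lambda}\|f\|_{q,\lambda}^q$; if instead $1<2^jR<2$, one applies the Morrey bound on $B_1(x)\supseteq\supp\varphi_{x,\rho_1}$ and uses $1\leq (2^jR)^{n-\lambda}$ (valid since $n-\lambda\geq 0$) to obtain the same inequality. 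In either case
$$\int_{A_j}|f|^2\,dV\;\leq\;C_g\,(2^jR)^{\,n-2\lambda/q}\,\|f\|_{q,\lambda}^2,$$
and for the central ball the identical estimate holds with $j=0$, since there $e^{-d^2/4R^2}\leq 1$. (When $q=\infty$ this is the limiting statement $\int_{A_j}|f|^2\leq\|f\|_\infty^2|A_j|\leq C_g(2^jR)^n\|f\|_\infty^2$, and $\|f\|_{\infty,\lambda}=\|f\|_\infty$, so the argument is unchanged.)

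Summing over $j$ and factoring out $R^{\,n-2\lambda/q}$,
$$\int_M|f|^2\,e^{-d(x,\cdot)^2/4R^2}\,\varphi_{x,\rho_1}\,dV\;\leq\;C_g\,R^{\,n-2\lambda/q}\,\|f\|_{q,\lambda}^2\Big(1+\sum_{j\geq 1}e^{-4^{j-1}/4}\,2^{\,j(n-2\lambda/q)}\Big).$$
Since $q\geq 2$ and $\lambda\leq n$, the exponent satisfies $0\leq n-2\lambda/q\leq n$, so the series is dominated by $\sum_j e^{-4^{j-1}/4}2^{jn}=:C_n<\infty$. Multiplying through by $R^{4-n}/(4\pi)^{n/2}$ gives $\Phi_{x,\rho_1}(f,R,x)\leq C\,R^{4-2\lambda/q}\|f\|_{q,\lambda}^2$, and one takes the constant to be at least $1$ to obtain (\ref{morreycontrolsmonotonicity:bound}). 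I do not expect a genuine obstacle here: the only points needing mild care are the treatment of the outermost annulus, where $2^jR$ may slightly exceed $1$ and the cutoff support must be invoked, and checking that $n-2\lambda/q\geq 0$ so that the super-exponentially decaying series converges to a dimensional constant.
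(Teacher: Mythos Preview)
Your proof is correct. The dyadic decomposition, the H\"older step, the handling of the outermost annulus via the cutoff support, and the convergence of the series (using $0\le n-2\lambda/q\le n$) all check out. One cosmetic point: when you write $\int_{A_j}|f|^2\,dV$ on the left-hand side of the H\"older estimate, you of course mean the integral restricted to $A_j\cap\supp\varphi_{x,\rho_1}$, since that is what actually appears in $\Phi$ after bounding $\varphi\le 1$; this is implicit but worth making explicit.

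The paper's proof takes a different, shorter route: it observes that $(4\pi R^2)^{-n/2}e^{-d(x,y)^2/4R^2}$ is comparable to the heat kernel $H(x,y,R^2)$ on a compact manifold for $R\in(0,1]$, and then invokes a heat-kernel-versus-Morrey estimate from a companion paper (\cite[Proposition 3.2]{estimatespaper}) to conclude directly. Your argument is more elementary and self-contained, avoiding any appeal to heat kernel asymptotics or to an external reference; the tradeoff is that the paper's proof is three lines long once one is willing to cite the heat kernel machinery. Both approaches give a constant depending only on the geometry of $M$, as required.
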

\begin{proof}
Since \(M\) is compact, for \(R \in (0, 1]\) the function \((4\pi R^2)^{-\frac{n}{2}}e^{-\frac{d(x, y)^2}{4R^2}}\) approximates the heat kernel \(H(x, y, R^2)\) of \(M\). In particular, the proof of \cite[Proposition 3.2]{estimatespaper} in the case \(s_1 = \frac{q}{2}, s_2 = \infty,\) and \(t = R^2\) implies that 
\begin{align*}
    \Phi_{x, \rho_1}(f, R, x) \leq R^4C_{(\ref{morreycontrolsmonotonicity:bound})}(R^2)^{-\frac{\lambda}{q}}\||f|^2\|_{\frac{q}{2}, \lambda} \leq C_{(\ref{morreycontrolsmonotonicity:bound})}R^{4-\frac{2\lambda}{q}} \|f\|_{q, \lambda}^2.
\end{align*}
\end{proof}

We recall the definition of a neighborhood deformation retract.
\begin{defn}\label{defn:nbddefret}
    Given a topological space \(X\), a closed subset \(B \subset X\) is called a neighborhood deformation retract of \(X\) if there is a continuous function \(f : X \to [0, 1]\), with \(B = f^{-1}(0)\), and a homotopy \(G : X \times [0, 1] \to X\) satisfying \(G_0 = \text{id}\), \(G_s|_B = \text{id}\) for \(s \in [0, 1]\), and \(G_1(x) \in B\) if \(f(x) < 1\).
\end{defn}
 We will prove the following detailed version of Theorem \ref{thm:quaternionkahlergap}.
\begin{thm}[Cf. Theorem \ref{thm:quaternionkahlergap}]\label{thm:detailedquaternionkahlergap}
  Let $M$ be a compact quaternion-K{\"a}hler manifold of dimension \(n = 4m\) and let $k \geq 2m.$
  There exists \(\delta_0 \in (0, 1)\) and \(C_{(\ref{detailedquaternionkahlergap:estimatepapercriticalbounds})} > 1\),
  depending on the geometry of \(M\), as follows.

  Let $A_0$ be an \(H^k\) pseudo-holomorphic connection on a \((G, \rho)\)-bundle \(E \to M\), for which
\begin{align}\label{detailedquaternionkahlergap:F+initiallysmall}
      \|F_{A_0}^+\|_{2, 4} < \delta \leq \delta_0.
  \end{align} 
  Then the solution of (\ref{ymf}) starting from \(A_0\) exists for all time, and we have the estimate 
  \begin{align}\label{detailedquaternionkahlergap:estimatepapercriticalbounds}
      \|F^+(t)\|_{2, 4} + \min\{1, t\}\|F^+(t)\|_\infty \leq C_{(\ref{detailedquaternionkahlergap:estimatepapercriticalbounds})}\delta.
  \end{align} 
  Suppose further that \(M\) has positive scalar curvature. Then the solution converges exponentially to an instanton as $t \to \infty,$ and the space of \(H^k\) 
  instantons is a neighborhood deformation retract of the space of \(H^k\) pseudo-holomorphic connections.
  
  Finally, let $E_0 > 0,$ $q \in (2, \infty],$ and \(\lambda \in [0, 4)\). Let $\varepsilon_0, R_0$ be as in \cite[Theorem 6.2]{oliveirawaldron}. Assuming that $\delta > 0$ is sufficiently small, depending also on $E_0,q,$ and $\lambda,$ the following estimate is true. 
  In addition to (\ref{detailedquaternionkahlergap:F+initiallysmall}), suppose that
  \begin{align}\label{detailedquaternionkahlergap:hypotheses1}
      \|F_{A_0}\|_{2,4} + \| F^+_{A_0} \|_{q,\lambda} < E_0.
  \end{align}
 Given $x \in M,$ let $R \in \left( 0, R_0 \RB$ be such that
  \begin{align}\label{detailedquaternionkahlergap:hypotheses2}
      \Phi_{x, \rho_1} \left( |F_{A_0}|, R, x \right) \leq \frac{\eps_0}{2}.
  \end{align}
  We then have
  \begin{align}\label{detailedquaternionkahlergap:regularityscale}
      |F(x,t)| \leq \frac{C_n}{R^2}
  \end{align}
  for $(x,t) \in B_{R/2}(x) \times \LB R^2, \infty \RB.$
\end{thm}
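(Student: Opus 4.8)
The plan is to exploit the quaternion-K\"ahler Weitzenb\"ock formula (\ref{preliminaries:qkweitzenbock}), which makes $|F^+|$ a subsolution of a reaction--diffusion inequality along (\ref{ymf}), together with the monotonicity formula and $\eps$-regularity of \cite{oliveirawaldron} and the convergence estimates of the appendix; I would proceed in four steps.

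\emph{Step 1: long-time existence and the estimate (\ref{detailedquaternionkahlergap:estimatepapercriticalbounds}).} Pseudo-holomorphicity is preserved along (\ref{ymf}) (since $[\mathfrak{sp}(m),\mathfrak{sp}(1)]=0$ in $\Lambda^2$), so $F=F^-+F^+$ for all $t$. Lemma \ref{lemma:morreycontrolsmonotonicity} with $q=2,\ \lambda=4$ converts (\ref{detailedquaternionkahlergap:F+initiallysmall}) into the bound $\Phi_{x,\rho_1}(|F^+_{A_0}|,R,x)\le C\delta^2$ for all $x\in M$ and $R\in(0,1]$. From (\ref{preliminaries:qkweitzenbock}) and $\partial_tF^+=-\Delta_AF^+$ one gets the subsolution inequality
\[(\partial_t-\Delta)|F^+|^2\le -2|\nabla_AF^+|^2+C|F^+|^3+C|F^+|^2 .\]
By the parabolic Morrey smoothing estimate for such subsolutions --- the analogue for $|F^+|$ of the estimates of \cite{estimatespaper}, relying on the monotonicity of $\Phi$ --- the $\Phi$-smallness is preserved along the flow on the maximal existence interval; comparing the Gaussian weight with a ball and covering then gives $\|F^+(t)\|_{2,4}\le C\delta$, and $\eps$-regularity at the parabolic scale $\min\{1,\sqrt{t}\,\}$ gives $\|F^+(t)\|_\infty\le C\delta/\min\{1,t\}$. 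This is (\ref{detailedquaternionkahlergap:estimatepapercriticalbounds}). In particular $\int_\tau^T\|F^+(s)\|_\infty\,ds<\infty$ for $0<\tau<T$, so the quaternion-K\"ahler analogue of Theorem \ref{thm:F+boundimpliesFbound} (cf.\ \cite[Cor.\ 7.3]{oliveirawaldron}) bounds $\|F(t)\|_\infty$ on every finite interval, and the blow-up criterion of Theorem \ref{thm:wellposedness} gives existence for all time.

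\emph{Step 2: convergence when the scalar curvature is positive.} Then the zeroth-order coefficient in (\ref{preliminaries:qkweitzenbock}) is bounded below by some $c_0>0$. Pairing $\partial_tF^+=-\Delta_AF^+$ with $F^+$, using (\ref{preliminaries:qkweitzenbock}), $|\llbracket F^+,F^+\rrbracket|\le C|F^+|^2$, and $\|F^+(t)\|_\infty\le C\delta$ for $t\ge1$ from (\ref{detailedquaternionkahlergap:estimatepapercriticalbounds}), we obtain for $\delta_0$ small
\[\tfrac{d}{dt}\|F^+\|^2\le C\|F^+\|_\infty\|F^+\|^2-c_0\|F^+\|^2\le -\tfrac{c_0}{2}\|F^+\|^2,\qquad t\ge1 ,\]
so $\|F^+(t)\|$ decays exponentially, and $\eps$-regularity at unit scale upgrades this to exponential decay of $\|F^+(t)\|_\infty$. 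Hence $\|F^+(\cdot)\|_\infty\in L^1([1,\infty))$, so the quaternion-K\"ahler analogue of Theorem \ref{thm:F+boundimpliesFbound} now gives a uniform bound on $\|F(t)\|_\infty$; also, by (\ref{preliminaries:qkchernweil}), $\YM(A(t))-\lambda(E)=c\|F^+(t)\|^2\to0$ exponentially, so $\tfrac{d}{dt}\YM=-\|D^*F\|^2$ and a summation over unit time-intervals give $\int_1^\infty\|D^*F(t)\|\,dt<\infty$ with exponential tail. The Sobolev distance estimate of Theorem \ref{thm:Hkestimates} then forces exponential $H^k$ convergence of $A(t)$ to a limit $A_\infty$, which satisfies $F_{A_\infty}=F_{A_\infty}^-$ (an instanton) since $\|F^+(t)\|\to0$ and pseudo-holomorphicity is preserved. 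The deformation-retraction statement follows exactly as in the proof of Theorem \ref{thm:fourspheregeneralgap}: $A_0\mapsto\lim_{t\to\infty}A(t)$ is continuous on $\{\|F^+\|_{2,4}<\delta_0\}$ by Theorem \ref{thm:wellposedness} at finite times and by Theorem \ref{thm:continuityatinfinitetime} at infinite time (instantons being local minimizers of $\YM$), it fixes instantons, and it descends to the quotient by $H^{k+1}$ gauge transformations.

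\emph{Step 3: the full-curvature estimate (\ref{detailedquaternionkahlergap:regularityscale}).} Here I would invoke Hamilton's monotonicity formula \cite{hamiltonmonotonicity} for $\Phi(|F|,\cdot)$ along (\ref{ymf}), in its localized form on the compact manifold $M$. Writing $|F|^2=|F^-|^2+|F^+|^2$, the self-dual contribution to $\Phi(|F(t)|,r,y)$ is $\le C\|F^+(t)\|_{2,4}^2\le C\delta^2$ by (\ref{detailedquaternionkahlergap:estimatepapercriticalbounds}); moreover, Lemma \ref{lemma:morreycontrolsmonotonicity} with (\ref{detailedquaternionkahlergap:hypotheses1}) gives $\Phi_{y,\rho_1}(|F^+(t)|,r,y)\le Cr^{\,4-2\lambda/q}E_0^2$, whose exponent is strictly positive since $2\lambda/q<4$, so this part decays subcritically at small scales --- which is exactly what is needed to absorb the geometric and cutoff error terms in the localized monotonicity formula. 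Combining this with the hypothesis (\ref{detailedquaternionkahlergap:hypotheses2}) and the energy bound implicit in (\ref{detailedquaternionkahlergap:hypotheses1}), monotonicity propagates the smallness of $\Phi(|F|,\cdot)$ forward to every $(y,t)$ with $y\in B_{R/2}(x)$ and $t\ge R^2$ (the restriction $t\ge R^2$ giving the parabolic room to reach back to the initial scale $R$), so that $\Phi_{(y,t)}(|F|,cR)\le\eps_0'$; the parabolic $\eps$-regularity theorem for (\ref{ymf}) at scale $\sim R$ then yields (\ref{detailedquaternionkahlergap:regularityscale}), provided $\delta_0$ and $\eps_0$ were chosen small in terms of $E_0,q,\lambda$.

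\emph{Main obstacle.} The crux is the propagation claim in Step 1: that the $\Phi$-smallness of $|F^+|$ persists along (\ref{ymf}) with no loss. This is where the quaternion-K\"ahler structure is essential through (\ref{preliminaries:qkweitzenbock}), and where the subsolution property of $|F^+|$ must be combined carefully with the monotonicity formula and $\eps$-regularity. In Step 3 the delicate point is tracking the error terms in the localized monotonicity formula and verifying that the subcritical bound on $\|F^+\|_{q,\lambda}$ from (\ref{detailedquaternionkahlergap:hypotheses1}) absorbs them uniformly in $t$.
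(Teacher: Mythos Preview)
Your Steps 1 and 2 are essentially the paper's argument. Two small points: in Step 1, to apply \cite[Theorem 1.2]{estimatespaper} you need not only $\|F^+_{A_0}\|_{2,4}$ small but also $\sup_t\|F^+(t)\|_{L^2}$ small; the paper gets this from the Chern--Weil identity (\ref{preliminaries:qkchernweil}), which makes $\|F^+(t)\|^2$ nonincreasing, together with $\|\cdot\|\le C\|\cdot\|_{2,4}$. In Step 2 the paper uses H\"older and Sobolev rather than the sup bound to handle the cubic term, but your simpler route via $\|F^+(t)\|_\infty\le C\delta$ for $t\ge1$ works just as well.

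Your Step 3, however, misidentifies the role of the subcritical hypothesis (\ref{detailedquaternionkahlergap:hypotheses1}). You write that Lemma \ref{lemma:morreycontrolsmonotonicity} applied to (\ref{detailedquaternionkahlergap:hypotheses1}) gives $\Phi_{y,\rho_1}(|F^+(t)|,r,y)\le Cr^{\,4-2\lambda/q}E_0^2$ and that this ``absorbs the geometric and cutoff error terms'' in monotonicity. But (\ref{detailedquaternionkahlergap:hypotheses1}) is a bound on $F^+_{A_0}$, at $t=0$ only; you have not propagated the subcritical Morrey norm, so this bound at positive $t$ is unjustified. More importantly, the actual obstruction is elsewhere. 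The paper applies \cite[Theorem 6.2]{oliveirawaldron} as a black box; its quaternion-K\"ahler monotonicity/$\eps$-regularity statement requires
\[
\kappa\int_0^\infty \|F^+(t)\|_\infty\,dt<\tfrac{\eps_0}{2}.
\]
The critical bound (\ref{detailedquaternionkahlergap:estimatepapercriticalbounds}) gives only $\|F^+(t)\|_\infty\le C\delta\, t^{-1}$ for small $t$, which is \emph{not} integrable near $0$, so no choice of $\delta$ makes $\int_0^{T'}\|F^+\|_\infty\,dt$ small. This is precisely where (\ref{detailedquaternionkahlergap:hypotheses1}) enters: the subcritical Morrey bound $\|F^+_{A_0}\|_{q,\lambda}\le E_0$ with $\alpha:=\lambda/(2q)<1$ feeds into \cite[(1.7)]{estimatespaper} to give $\|F^+(t)\|_\infty\le C't^{-\alpha}$ for small $t$, which \emph{is} integrable. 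One then chooses $T'$ so that $\int_0^{T'}\|F^+\|_\infty<\eps_0/(4\kappa)$, and uses the exponential decay from Step 2 (plus $\delta$ small depending on $E_0,q,\lambda$) to make $\int_{T'}^\infty\|F^+\|_\infty<\eps_0/(4\kappa)$. With that integral controlled, \cite[Theorem 6.2]{oliveirawaldron} delivers (\ref{detailedquaternionkahlergap:regularityscale}) directly. So the subcritical hypothesis is about short-time integrability of $\|F^+(t)\|_\infty$, not about absorbing cutoff errors at spatial scales.
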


  \begin{proof}
  In the sequel, \(C > 0\) is a constant depending on the geometry of \(M\) which may increase in each appearance. We first prove the bound (\ref{detailedquaternionkahlergap:estimatepapercriticalbounds}). Since the hypothesis (\ref{detailedquaternionkahlergap:F+initiallysmall}) and the conclusion (\ref{detailedquaternionkahlergap:estimatepapercriticalbounds}) are gauge-invariant, we WLOG assume that \(A_0\) is smooth, in view of item \ref{wellposedness:gaugeequivalenttosmooth} of Theorem \ref{thm:wellposedness}. By \cite[Prop. 4.1]{oliveirawaldron}, \(A(t)\) is pseudo-holomorphic as long as it exists. Let \([0, T)\) be the interval of existence of \(A(t)\). By the Weitzenb{\"o}ck formula (\ref{preliminaries:qkweitzenbock}) for \(\Omega_+^2(\mathfrak{g}_E)\) and Kato's inequality,
  \begin{align*}
      \left(\frac{\partial}{\partial t} - \Delta \right) |F^+| \leq B_1|F^+|^2 + B_2|F^+|,
  \end{align*} where \(B_1 > 0\) is universal and \(B_2 > 0\) depends on the geometry of \(M\). Thus by \cite[Theorem 1.2]{estimatespaper}, (\ref{detailedquaternionkahlergap:estimatepapercriticalbounds}) holds provided
  \begin{align*}
      \|F_{A_0}^+\|_{2, 4} + \sup_{0 \leq t < T} \|F^+(t)\| < \delta_0
  \end{align*} for \(\delta_0\) small enough. Now, on a compact manifold of dimension \(n \geq 4\), the \(L^2\) norm is dominated by the \(M^{2, 4}\) norm up to a factor of \(C\). Then since \(\|F^+(t)\|\) is nonincreasing in view of (\ref{preliminaries:qkchernweil}), we have that
    \begin{align*}
        \sup_{0 \leq t < T} \|F^+(t)\| \leq C\|F_{A_0}^+\|_{2, 4}.
    \end{align*} Thus (\ref{detailedquaternionkahlergap:estimatepapercriticalbounds}) indeed holds. Since \(\|F^+(t)\|_\infty \in L_{\text{loc}}^1((0, T))\), we deduce from \cite[Cor. 7.3]{oliveirawaldron} and item (\ref{wellposedness:blowupcharacterization}) of Theorem \ref{thm:wellposedness} that \(A(t)\) exists for all time, with \(\|F(t)\|_\infty\) blowing up at most exponentially as \(t \to \infty\). \par 
    We next specialize to the case that \(M\) has positive scalar curvature. 
    In the sequel, \(c > 0\) is a constant depending on \(M\) which may decrease in each appearence but importantly will remain positive. The evolution equation for \(F\) and the Weitzenb{\"o}ck formula (\ref{preliminaries:qkweitzenbock}) imply
    \begin{align*}
        \frac{1}{2}\frac{d}{dt} \int |F^+|^2 &= -\int \langle \nabla^\ast \nabla  F^+, F^+\rangle + \int \langle \llbracket F^+, F^+\rrbracket, F^+\rangle - 2c \int |F^+|^2 \\
        &=  -\int| \nabla  F^+|^2 + \int \langle \llbracket F^+, F^+\rrbracket, F^+\rangle - 2c \int |F^+|^2 \\
        &\leq -\int| \nabla  F^+|^2 + C\int |F^+|^3 - 2c\int |F^+|^2.
    \end{align*} By H{\"o}lder's inequality,
    \begin{align*}
        \int |F^+|^3 \leq \left(\int |F^+|^{\frac{2n}{n-2}}\right)^{\frac{n-2}{n}}\left(\int |F^+|^{\frac{n}{2}}\right)^{\frac{2}{n}}.
    \end{align*}By Kato's inequality and the Sobolev inequality on functions,
    \begin{align*}
       \|F^+\|_{\frac{2n}{n-2}}^2 \leq C(\|\nabla F^+\|^2 + \|F^+\|^2),
    \end{align*} where again \(C\) is independent of the reference connection. Hence if \(\delta_0\) is sufficiently small, (\ref{detailedquaternionkahlergap:estimatepapercriticalbounds}) implies that for \(t \geq 1\),
    \begin{align*}
        \frac{d}{dt} \int |F^+|^2 \leq -2c \int |F^+|^2.
    \end{align*} Thus by Gronwall's inequality, we have for \(t \geq 1\)
    \begin{align}\label{detailedquaternionkahlergap:F+energyexponentialdecay}
        \|F^+(t)\| \leq \|F^+(0)\|e^{-ct} \leq C\delta e^{-ct}.
    \end{align}  Then  (\ref{detailedquaternionkahlergap:estimatepapercriticalbounds}) and parabolic Moser iteration yield 
    \begin{align}\label{detailedquaternionkahlergap:F+supexponentialdecay}
        \|F^+(t)\|_\infty < C\delta e^{-ct}, \ \ \ t \geq 1.
    \end{align} Thus \(\|F^+(t)\|_\infty\) is in \(L^1([1, \infty))\), so it follows from \cite[Cor. 7.3]{oliveirawaldron} that
    \begin{align}\label{detailedquaternionkahlergap:Fsupbound}
       \sup_{t \geq 1} \|F(t)\|_\infty < \infty. 
    \end{align} Furthermore, it follows from (\ref{preliminaries:qkchernweil}) and (\ref{detailedquaternionkahlergap:F+supexponentialdecay}) that we have, analogously to (\ref{fourspheregeneralgap:rhosmall}),
    \begin{align*}
        \int_{t'}^{t} \|D^\ast F(s)\| \, ds \leq C e^{-ct'}, \ \ \ 2 \leq t' < t \leq \infty.
    \end{align*} Therefore, we have as in the proof of Theorem \ref{thm:fourspheregeneralgap} that \(A(t)\) converges in \(C^\infty\) to an instanton exponentially as \(t \to \infty\). \par 
    We next show that the subset of instantons is a neighborhood deformation retract of the space of pseudo-holomorphic connections. 
    Let
    \begin{align*}
        X_1 &:= \{A \in H^k(\mathcal{A}_E) \mid A \text{ is pseudo-holomorphic}\} \\
        X_0 &:= \{A \in X_1 \mid F_A^+ = 0\} \\
        U_1 &:= \{A \in X_1 \mid \|F_A^+\|_{2, 4} < C_{(\ref{detailedquaternionkahlergap:estimatepapercriticalbounds})}^{-1} \delta_0\} \\
        U_2 &:= \{A \in X_1 \mid \|F_A^+\|_{2, 4} < \delta_0\}.
    \end{align*} Recall that $C_{(\ref{detailedquaternionkahlergap:estimatepapercriticalbounds})} > 1.$ Let \(h : [0, \infty) \to [0, 1]\) be a smooth, nondecreasing function such that \(h(0) = 0\) and \(h([C_{(\ref{detailedquaternionkahlergap:estimatepapercriticalbounds})}^{-1}\delta_0, \infty)) = 1\). Let \(\tilde{h} : [0, \infty) \to [0, 1]\) be a smooth, nonincreasing function such that \(\tilde{h}([0, C_{(\ref{detailedquaternionkahlergap:estimatepapercriticalbounds})}^{-1}\delta_0]) = 1\) and \(\tilde{h}([\delta_0, \infty)) = 0\). Let $\varphi_A(t)$ be as in the proof of Theorem \ref{thm:fourspheregeneralgap}. Define
    \begin{align*}
        f &: X_1 \to [0, 1] \\
        f(A) &:= h(\|F_A^+\|_{2, 4}) \\
        H &: X_1 \times [0, 1] \to [0, 1] \\
        H(A, s) &:= \min\{\tilde{h}(\|F_A^+\|_{2, 4}), s\} \\
        \Psi(A, s) &:= 
        \begin{cases}
            \varphi_A\left(\frac{H(A, s)}{1-H(A, s)}\right) & 0 \leq s < 1, \\
            \lim_{t \to \infty} \varphi_A(t) & H(A, s) = 1.
        \end{cases}
    \end{align*} Note that \(\Psi(U_1 \times [0, 1]) \subset U_2\) by (\ref{detailedquaternionkahlergap:estimatepapercriticalbounds}). By item (\ref{wellposedness:wellposedness}) of Theorem \ref{thm:wellposedness} and Prop. \ref{thm:continuityatinfinitetime}, \(\Psi\) is continuous. Thus \(X_0\) is a neighborhood deformation retract of \(X_1\) with homotopy \(\Psi\) and associated function \(f\). \par 
    We finally establish the refined estimate (\ref{detailedquaternionkahlergap:regularityscale}) on the full curvature of \(A(t)\) using \cite[Theorem 6.2]{oliveirawaldron}. Recall the constants \(\varepsilon_0, R_0\) from the statement of \cite[Theorem 6.2]{oliveirawaldron}. By (\ref{detailedquaternionkahlergap:hypotheses1}) and Lemma \ref{lemma:morreycontrolsmonotonicity}, 
    the required energy and entropy bounds on $A_0$ \cite[(6.7-8)]{oliveirawaldron} are satisfied. Moreover, by (\ref{detailedquaternionkahlergap:hypotheses2}) we have
    \begin{align*}
        \Phi_{x, \rho_1}(|F_{A_0}|, R, x) \leq \frac{\varepsilon_0}{2}.
    \end{align*} Thus, to obtain hypothesis \cite[(6.9)]{oliveirawaldron}, we just need to show
    \begin{align*}
        \kappa \int_0^\infty \|F^+(t)\|_\infty \, dt < \frac{\varepsilon_0}{2},
    \end{align*} where \(\kappa\) depends only on \(n\). By assumption
    \begin{align*}
        \|F_{A_0}^+\|_{q, \lambda} \leq E_0,
    \end{align*} and we have
    \begin{align*}
        \alpha := \frac{\lambda}{2q} < 1.
    \end{align*} Thus by \cite[(1.7)]{estimatespaper}, we have for some \(C' > 0\), depending on \(E_0, \alpha,\) and the geometry of \(M\), that
    \begin{align*}
        \|F^+(t)\|_{\infty} \leq \min\{C't^{-\alpha}, C\delta t^{-1}\} + C\delta. 
    \end{align*}  Since \(\alpha < 1\), we may choose \(T' \in (0, 1)\) such that 
    \begin{align*}
        \int_0^{T'} \|F^+(t)\|_\infty \, dt < \frac{\varepsilon_0}{4\kappa}.
    \end{align*} On the other hand, if \(\delta\) is small enough depending on \(E_0, \alpha,\) and the geometry of \(M\),  (\ref{detailedquaternionkahlergap:F+supexponentialdecay}) implies
    \begin{align*}
        \int_{T'}^\infty \|F^+(t)\|_\infty < \frac{\varepsilon_0}{4\kappa}.
    \end{align*} Hence
    \begin{align*}
        \int_0^\infty \|F^+(t)\|_\infty \, dt < \frac{\varepsilon_0}{2\kappa}. 
    \end{align*} The desired bound (\ref{detailedquaternionkahlergap:regularityscale}) now follows from \cite[Theorem 6.2]{oliveirawaldron}.
\end{proof}

\vspace{10mm}

\section{Deformation to flat connections}

\begin{proof}[First proof of Theorem \ref{thm:flatgap}] 
As in the proof of Theorem \ref{thm:quaternionkahlergap}, we WLOG assume \(A_0\) is smooth. By \cite[Theorem 1.2]{estimatespaper}, we have for \(\delta_1 > 0\) small enough, depending only on the geometry of \(M\), that if
\begin{align*}
    \|F_{A_0}\|_{2, 4} < \delta \leq \delta_1,
\end{align*} then
\begin{align}\label{flatgap:estimatepapercriticalbounds}
    \|F(t)\|_{2, 4} + \min\{1, t\}\|F(t)\|_\infty \leq C_{(\ref{flatgap:estimatepapercriticalbounds})} \delta, \quad t > 0.
\end{align} Here, \(C_{(\ref{flatgap:estimatepapercriticalbounds})}\) only depends on the geometry of \(M\). The long-time existence statement of the theorem now follows in view of item (\ref{wellposedness:blowupcharacterization}) of Theorem \ref{thm:wellposedness}. \par
We next prove convergence to a flat connection. 
It suffices to prove the statement for vector bundles of a given rank $r.$ Suppose for contradiction that there does not exist \(\delta_1 > 0\), now depending on $r$ in addition to \(M\), such that for any smooth connection \(A_0\) on a bundle $E \to M$ of rank $r$ with 
\begin{align*}
    \|F_{A_0}\|_{2, 4} < \delta_1,
\end{align*} the solution \(A(t)\) of (\ref{ymf}) starting at \(A_0\) satisfies:
\begin{center}
    \(A(1)\) is contained in a gauge-invariant neighborhood \(U \supset \mathcal{F}\) as in Lemma \ref{lemma:uniformlojasiewicz}. \ \ \((\ast)\)
\end{center}
Then there exists a sequence of smooth connections \(A_i\) with 
\begin{align}\label{flatgap:morreygoingtozero}
    \|F_{A_i}\|_{2, 4} \searrow 0, 
\end{align} but for which the solution \(\tilde{A}_i(t)\) of (\ref{ymf}) starting at \(A_i\) does not satisfy \((\ast)\). In view of (\ref{flatgap:estimatepapercriticalbounds}), (\ref{flatgap:morreygoingtozero}) implies 
\begin{align*}
    \lim_{i \to \infty} \sup_{\frac{1}{2} \leq t < \infty} \|F_i(t)\|_\infty = 0.
\end{align*} Thus by standard strong Uhlenbeck compactness arguments for Yang-Mills flow, e.g. \cite[Theorem 1.3]{waldronuhlenbeck}, we may pass to a subsequence, also labeled by \(i\), for which there exist smooth gauge transformations \(u_i\) such that \(u_i(\tilde{A}_i(1))\) smoothly converges to a flat connection \(A_\infty\) on $E \to M.$ In particular, a neighborhood \(U\) of $A_\infty$ as stipulated in Lemma \ref{lemma:uniformlojasiewicz} exists, and furthermore, gauge-invariance of \(U\) implies \(\tilde{A}_i(1) \in U\) for \(i\) large enough. Thus \((\ast)\) holds, which yields the desired contradiction. \par

We may therefore let $\delta_1$ be such that $(\ast)$ holds for $A(1),$ so that the desired convergence to a flat connection follows from Lemma \ref{lemma:uniformlojasiewicz}. The argument that $\mathcal{F}$ is a neighborhood deformation retract of \(H^k(\mathcal{A}_E)\) is essentially identical to the argument given in the proof of Theorem \ref{thm:detailedquaternionkahlergap}, so we omit the details.
\end{proof}

As mentioned in the introduction, we can also give a proof of Theorem \ref{thm:flatgap} using monotonicity and $\varepsilon$-regularity in place of \cite[Theorem 1.2]{estimatespaper}.

\begin{proof}[Second proof of Theorem \ref{thm:flatgap}]
     As before, we WLOG assume \(A_0\) is smooth. We first establish long-time existence of \(A(t)\). Let $\rho_1 := \min\{\text{inj}(M), 1\}.$ Suppose \(A(t)\) exists on \([0, T]\) with \(T \in (0, \rho_1^2]\). Let \(\varepsilon_0, R_0\) be as in the statement of $\varepsilon$-regularity \cite[Theorem 6.2]{oliveirawaldron} (with \(\gamma = 1\)) with \(E_0 = E = 1\). By Lemma \ref{lemma:morreycontrolsmonotonicity} and $\varepsilon$-regularity \cite[Theorem 6.2]{oliveirawaldron} (with \(\gamma = 1\)), we have that if \(\delta_1\) is sufficiently small, depending on the geometry of \(M\), then \(\|F(T)\|_\infty < \infty\). Consequently, we may assume \(T \geq R_0^2\), and we have 
    \begin{align*}
        \|F(R_0^2)\|_\infty \leq \frac{C_n}{R_0^2}.
    \end{align*}
    Thus the fact that the energy is non-increasing implies that for \(\delta_1\) sufficiently small,
    \begin{align*}
        \sup_{x \in M, t \geq R_0^2} \Phi_{x, \rho_1}(|F(t)|, R_0, x) < \varepsilon_0.
    \end{align*} Hence, it follows from \cite[Theorem 6.2]{oliveirawaldron} again that \(A(t)\) is uniformly bounded at any time \(t \geq R_0^2\). Therefore, \(A(t)\) exists for all time. \par 
    The convergence to a flat connection follows as in the first proof of Theorem \ref{thm:flatgap}, which requires \(\delta_1\) to be small depending on $\rk(E)$ in addition to \(M\). \par 
    Finally, to show that the subset of flat connections is a neighborhood deformation retract, we just need the existence of some \(C_0\), depending only on the geometry of \(M\), such that
    \begin{align*}
        \|F(t)\|_{M^{2, 4}} \leq C_0 \delta,  \ \ \ t \geq 0.
    \end{align*} This bound follows from monotonicity, e.g. \cite[Theorem 5.7]{oliveirawaldron} (with \(\gamma = 1\)), for short time, and from $\varepsilon$-regularity for later times.
\end{proof}

\begin{cor}\label{cor:lowYMandtimeoneimpliesconvergence}
    Given $E \to M,$ with \(M\) of dimension \(n \geq 4\), there exists $\delta_2 > 0$, depending only on $\rk(E)$ and the geometry of \(M\), as follows. Let $A_0$ be an \(H^k\) connection with $\YM(A_0)  < \delta \leq \delta_2,$ and suppose that the solution $A(t)$ of (\ref{ymf}) with $A(0) = A_0$ exists for at least time $T > \delta_2^{-1}\delta^{\frac{2}{n-4}}$. Then $T = \infty$ and \(A(t)\) 
    converges to a flat connection.
\end{cor}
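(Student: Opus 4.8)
The plan is to exploit the fact that, although smallness of $\YM(A_0)$ does not by itself prevent finite-time blowup (that is the phenomenon behind Corollary \ref{cor:Naito}), the mere assumption that $A(t)$ has survived past the critical time $\delta_2^{-1}\delta^{\frac{2}{n-4}}$ forces Hamilton's monotonicity formula to guarantee that the curvature has spread out enough for Theorem \ref{thm:flatgap} to apply. First, exactly as in the proofs of Theorems \ref{thm:detailedquaternionkahlergap} and \ref{thm:flatgap} and using item (\ref{wellposedness:gaugeequivalenttosmooth}) of Theorem \ref{thm:wellposedness}, I would reduce to the case that $A_0$ is smooth; I would also assume $n \ge 5$, since for $n = 4$ the Morrey norm $\|\cdot\|_{M^{2,4}}$ is dominated by the $L^2$ norm, so $\|F_{A_0}\|_{M^{2,4}} \le \sqrt{2\YM(A_0)} < \sqrt{2\delta_2}$ and Theorem \ref{thm:flatgap} applies to $A_0$ directly once $\delta_2$ is small. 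Because $\YM(A(t))$ is nonincreasing along (\ref{ymf}), we have $\YM(A(t)) < \delta$ throughout $[0,T)$, so $\Phi_{x,\rho_1}(|F(t)|,R,x) \le C\,R^{4-n}\delta$ for all $x \in M$, all $R \in (0,1]$, and all $t \in [0,T)$. The key claim is that if $\delta_2$ is small enough --- depending on the geometry of $M$ and, via Theorem \ref{thm:flatgap}, on $\rk(E)$ --- then there is a time $t_1 \in (0,T)$ with $\|F(t_1)\|_{M^{2,4}} < \delta_1$, where $\delta_1 = \delta_1(\rk(E),M)$ is the constant of Theorem \ref{thm:flatgap}. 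Granting this, $A(t_1)$ is smooth with $M^{2,4}$-small curvature, so Theorem \ref{thm:flatgap} applied with initial data $A(t_1)$ produces a solution of (\ref{ymf}) that exists for all time and converges smoothly to a flat connection; by uniqueness of the flow (item (\ref{wellposedness:wellposedness}) of Theorem \ref{thm:wellposedness}) this solution is $t \mapsto A(t_1+t)$, so $T = \infty$ and $A(t)$ converges to a flat connection as $t \to \infty$.

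To prove the claim I would apply Hamilton's monotonicity formula for Yang-Mills flow, in the form of \cite[Theorem 5.7]{oliveirawaldron} (cf.\ \cite{hamiltonmonotonicity}). Let $\bar R = \bar R(M) \in (0,\rho_1/2]$ be a scale below which that formula holds, and set
\[
t_1 := \tfrac12\,\min\!\big\{\delta_2^{-1}\delta^{\frac{2}{n-4}},\;\bar R^2\big\},
\]
so that $t_1 \le \tfrac12\bar R^2$ and, by the hypothesis on $T$, $2t_1 \le \delta_2^{-1}\delta^{\frac{2}{n-4}} < T$. Fix $x \in M$ and $0 < r \le \sqrt{t_1}$. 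Applying monotonicity along the flow with spacetime vertex $(x,t_1+r^2)$ --- legitimate since $t_1+r^2 \le 2t_1 \le \bar R^2 < T$ --- and comparing the monotone density at scale $r$ and time $t_1$ with its value at the larger scale $\sqrt{t_1+r^2}\le\bar R$ and time $0$, one gets, up to the standard error terms on a compact manifold,
\[
\Phi_{x,\rho_1}(|F(t_1)|,r,x) \;\le\; C\,\Phi_{x,\rho_1}\!\big(|F(0)|,\sqrt{t_1+r^2},x\big) + C\,\delta \;\le\; C\,(t_1+r^2)^{\frac{4-n}{2}}\delta + C\,\delta \;\le\; C\,t_1^{\frac{4-n}{2}}\delta + C\,\delta,
\]
where the middle step uses $\Phi_{x,\rho_1}(|F(0)|,s,x) \le C\,s^{4-n}\YM(A_0) < C\,s^{4-n}\delta$ and the last uses $r\le\sqrt{t_1}$. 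For $r \in (\sqrt{t_1},\bar R]$ the bound $\Phi_{x,\rho_1}(|F(t_1)|,r,x)\le C\,r^{4-n}\delta \le C\,t_1^{\frac{4-n}{2}}\delta$ holds directly from the energy bound. By the choice of $t_1$, the quantity $t_1^{\frac{4-n}{2}}\delta$ equals $(2\delta_2)^{\frac{n-4}{2}}$ when the first term in the minimum is realized and is at most $C\delta$ otherwise; in all cases $\Phi_{x,\rho_1}(|F(t_1)|,r,x) \le C\big(\delta_2^{\frac{n-4}{2}}+\delta\big)$ for every $x\in M$ and $r\le\bar R$. Finally, since the Gaussian and the cutoff in the definition of $\Phi$ are bounded below on $B_R(x)$ when $R\le\bar R\le\rho_1/2$, we have $R^{4-n}\int_{B_R(x)}|F(t_1)|^2 \le C\,\Phi_{x,\rho_1}(|F(t_1)|,R,x)$ for $R\le\sqrt{t_1}$, while $R^{4-n}\int_{B_R(x)}|F(t_1)|^2 \le 2R^{4-n}\delta \le 2t_1^{\frac{4-n}{2}}\delta$ for $\sqrt{t_1}\le R\le 1$; taking the supremum over $x$ and over $R\in(0,1]$ gives $\|F(t_1)\|_{M^{2,4}}^2 \le C\big(\delta_2^{\frac{n-4}{2}}+\delta\big)$. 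Since $\delta\le\delta_2$, the right-hand side tends to $0$ as $\delta_2\to 0$, so for $\delta_2$ small it is $<\delta_1^2$, establishing the claim.

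The step I expect to be the main obstacle is the monotonicity estimate: one must track carefully the validity range of Hamilton's formula on a compact manifold --- it is available only at scales $\le\bar R\le\rho_1$ and carries additive and multiplicative error terms that must be absorbed --- and one must verify that, whatever the size of $\delta_2^{-1}\delta^{\frac{2}{n-4}}$ relative to $\bar R^2$, enough of the interval $[0,T)$ lies below the vertex time $t_1+r^2$; the hypothesis $T > \delta_2^{-1}\delta^{\frac{2}{n-4}}$, together with the harmless factor $\tfrac12$ in the choice of $t_1$, is exactly what guarantees this. This is also what reconciles the result with Corollary \ref{cor:Naito}: on a topologically nontrivial bundle admitting no flat connection the conclusion of the present corollary is impossible, so on such a bundle the flow must cease to exist before time $\delta_2^{-1}\delta^{\frac{2}{n-4}}$.
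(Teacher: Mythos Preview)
Your argument is correct and follows essentially the same route as the paper: use Hamilton's monotonicity formula (as in \cite[Theorem 5.7]{oliveirawaldron}) together with the global energy bound $\YM(A(t)) < \delta$ to find a time $\tau$ at which $\|F(\tau)\|_{M^{2,4}} < \delta_1$, then invoke Theorem \ref{thm:flatgap}. The paper compresses this into a couple of lines, simply asserting that monotonicity and the definition of $\Phi$ yield $\|F(\tau)\|_{M^{2,4}} \le C\sup_{x,R}\Phi_{x,\rho_1}(|F(\tau)|,R,x) < \delta_1$ for suitable $\tau$; your version carefully spells out the choice of $t_1$, the two scale regimes $r\le\sqrt{t_1}$ and $r>\sqrt{t_1}$, and the conversion between $\Phi$ and the Morrey norm, which is exactly the content behind that sentence.
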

\begin{proof}
    Suppose \(A(t)\) exists on \(\left[0, \delta_2^{-1}\delta^{\frac{2}{n-4}}\right]\). It follows from monotonicity \cite[Theorem 5.7]{oliveirawaldron} (with \(\gamma = 1\)) and the definition of \(\Phi\) that if \(\delta_2\) is sufficiently small, then for some \(\tau\), we have
    \begin{align*}
        \left\|F\left(\tau\right)\right\|_{M^{2,4}} \leq C\sup_{x \in M, 0 < R \leq \rho_1} \Phi_{x, \rho_1}\left(\left|F(\tau)\right|, R, x\right) < \delta_1.
    \end{align*} Thus the corollary now follows from Theorem \ref{thm:flatgap}.
\end{proof}

\begin{proof}[Proof of Corollary \ref{cor:Naito}]
This follows from the contrapositive of Corollary \ref{cor:lowYMandtimeoneimpliesconvergence}.
\end{proof}

\begin{cor}\label{cor:ellipticgap}
    If $A$ is a 
    Yang-Mills connection on $E \to M$ with $\YM(A) < \delta_2,$ then $A$ is flat.
\end{cor}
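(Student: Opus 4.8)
The plan is to deduce Corollary \ref{cor:ellipticgap} immediately from Corollary \ref{cor:lowYMandtimeoneimpliesconvergence}, using the elementary observation that a Yang-Mills connection is a stationary point of the Yang-Mills flow. Concretely: if $A$ is Yang-Mills then $D_A^*F_A = 0$, so the constant path $A(t) \equiv A$ solves (\ref{ymf}) with $A(0) = A$; by the uniqueness statement in Theorem \ref{thm:wellposedness} this is \emph{the} solution with that initial data, and it manifestly exists for all $t \in [0, \infty)$.

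Now put $\delta := \YM(A)$. If $\delta = 0$ then $F_A \equiv 0$ and there is nothing to prove, so assume $0 < \delta < \delta_2$. Since the solution of (\ref{ymf}) starting at $A$ exists for all time, it certainly exists past time $\delta_2^{-1}\delta^{2/(n-4)}$, so the hypotheses of Corollary \ref{cor:lowYMandtimeoneimpliesconvergence} hold. That corollary then gives $T = \infty$ (which we already knew) and, crucially, that $A(t)$ converges to a flat connection as $t \to \infty$. But the path is constant, $A(t) \equiv A$, so the limit is $A$ itself; hence $A$ is flat.

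The only point worth a remark is regularity: Corollary \ref{cor:lowYMandtimeoneimpliesconvergence} is phrased for $H^k$ data, and for the constant path to be an honest flow solution one wants $A$ smooth, or at least smooth modulo gauge. This is automatic from elliptic regularity for Yang-Mills connections combined with Uhlenbeck's gauge-fixing theorem, so one loses nothing; alternatively one simply works in $H^k$ throughout, as elsewhere in the paper. I do not foresee any genuine obstacle here: this corollary is precisely the elliptic shadow of the finite-time-blowup dichotomy already exploited in Corollary \ref{cor:Naito}. (One could also argue directly via Theorem \ref{thm:flatgap}, upgrading the $L^2$-smallness of $F_A$ to $M^{2,4}$-smallness using the monotonicity formula or $\varepsilon$-regularity for the stationary solution, but routing through Corollary \ref{cor:lowYMandtimeoneimpliesconvergence} is the shortest path.)
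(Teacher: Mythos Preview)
Your proposal is correct and is essentially verbatim the paper's first proof: since a Yang-Mills connection is stationary under (\ref{ymf}), the constant solution $A(t)\equiv A$ exists for all time, so Corollary \ref{cor:lowYMandtimeoneimpliesconvergence} applies and the (constant) limit must be flat. The paper also records a second, purely elliptic proof---via Uhlenbeck compactness of a hypothetical sequence of non-flat Yang-Mills connections with energy tending to zero, followed by the \L ojasiewicz inequality---which you may find worth noting as an independent route.
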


\begin{proof}[First proof of Corollary \ref{cor:ellipticgap}]
If $A = A_0$ is initially Yang-Mills, then $A(t) \equiv A_0,$ so this follows directly from Corollary \ref{cor:lowYMandtimeoneimpliesconvergence}.
\end{proof}

\begin{proof}[Second proof of Corollary \ref{cor:ellipticgap}]\label{ellipticgapsecondproof}
It is easy to give a purely elliptic proof of Corollary \ref{cor:ellipticgap} along the lines of the second proof of Theorem \ref{thm:flatgap}. Supposing for 
contradiction that there is no gap, there exists a sequence \(A_i\) of 
Yang-Mills connections with $F_i \not \equiv 0$ and \(\|F_i\| \searrow 0\), which we may assume are smooth after gauge transforming. As in \cite[Fact 2.2 and Lemma 3.1]{nakajimahigherdimensions}, it follows that \(\|F_i\|_\infty \searrow 0\), together with bounds on all covariant derivatives. By strong Uhlenbeck compactness, we may pass to a subsequence, also labeled by \(i\), such that there exist smooth gauge transformations \(u_i\) for which \(u_i(A_i)\) smoothly converge to a flat connection \(A_\infty\). But then the Lojasiewicz inequality, (\ref{lojasiewicz:lojasiewiczinequality}) or \cite[Lemma 12]{byanguniqueness}, implies that \(F_{u_i(A_i)} \equiv 0 \equiv F_{A_i} \) for \(i\) large enough, contradicting our assumption.
\end{proof}

\begin{rmk}\label{rmk:ellipticgap}
Note that Corollary \ref{cor:Naito} 
gives another instance in which the parabolic $L^2$ gap fails, in this case at finite time, even while the elliptic $L^2$ gap holds.
We also note that Corollary \ref{cor:ellipticgap} evidently implies the \(L^{\frac{n}{2}}\) gap result published by Feehan \cite{feehanflatgap}.
\end{rmk}

\vspace{10mm}

\appendix

\section{Short-time existence and well-posedness of Yang-Mills flow}

  The goal of this section is to prove an existence, uniqueness, and well-posedness result for Yang-Mills flow, Theorem \ref{thm:wellposedness}. First we give our definition of a solution to (\ref{ymf}) in the Sobolev setting.

\begin{defn}[Cf. {\cite[Def. 2.1]{struweym}}]
Let $G$ be a compact Lie group, let $
\rho : G \to \GL_K(V)$ be a faithful orthogonal/unitary representation over a $K$-vector space $V$, let \(E \to M\) be a \((G, \rho)\)-bundle over a closed Riemannian \(n\)-manifold \(M\), let \(k\) be an integer such that \(k > \frac{n}{2}-1\), and let \(\tau_0 \in (0, \infty)\). We say that \(A(t)\) is a weak solution of (\ref{ymf}) on \([0, \tau_0]\) with initial data \(A_0\) if, writing $\nabla_{A(t)} = \nabla_{\text{ref}} + a(t)$ and $\nabla_{A_0} = \nabla_{\text{ref}} + a_0,$ the following hold:
\begin{enumerate}
    \item We have \(\left(a(t), F_{A(t)}\right) \in U(\tau_0),\) where \(U(\tau_0)\) is a certain Hilbert space defined in (\ref{shorttimeexistence:definitionofU}) below.
\item \(a(0) = a_0\).
\item  For all \(\varphi \in C_c^\infty(M \times (0, \tau_0), \Omega^1(\mathfrak{g}_E))\) we have
\begin{align*}
    \int_0^{\tau_0} \int_M \left\langle a(t), -\frac{\partial \varphi}{\partial t}\right\rangle +\langle F_{A(t)}, D_{A(t)} \varphi\rangle = 0.
\end{align*}
\end{enumerate}
\end{defn}
 
 \begin{thm}\label{thm:wellposedness}
     Given $K > 0,$ there exists $\tau> 0,$ depending on $K, k, \nabla_{\text{ref}},$ and the geometry of $M,$ such that for each $A_0 \in H^k\left( \mathcal{A}_E \right)$ with $\|A_0 \|_{H^k} < K,$ there exists $T \in [\tau,  \infty]$ and a unique weak solution $A(t)$ of (\ref{ymf}) with $A(0) = A_0,$ satisfying $a(t) \in C^0_{loc}(\left[0, T \right), H^k)$ and $\left( a(t), F_{A(t)} \right) \in U^+(T),$ where \(U^+(T)\) is defined in (\ref{shorttimeexistence:localhilbertspace}) below. We have the additional regularity
     \begin{equation}
        F_{A(t)} \in L_{\text{loc}}^2([0, T), H^k), \quad D_{A(t)}^\ast F_{A(t)} \in L_{\text{loc}}^2([0, T), H^{k-1}).
        \end{equation}
        The following properties also hold:
     \begin{enumerate}
         \item \label{wellposedness:wellposedness} The flow is well-posed: given $\eps > 0$ and $0 < T_0 < T,$ there exists $\delta > 0$ such that for $A_0' \in B_{\delta}(A_0) \subset H^k \left( \mathcal{A}_E \right),$ the corresponding solution $A'(t)$ exists on $\LB 0, T' \right),$ with $T' > T_0,$ and $\| A'(t) - A(t) \|_{H^k} < \eps$ for all $t \in \LB 0 , T_0 \RB.$
         \item \label{wellposedness:gaugeequivalenttosmooth} There exists a (constant-in-time) gauge transformation \(u \in H^{k+1}(\mathcal{G}_E)\) such that \(u(A(t))\) is smooth and solves (\ref{ymf}) classically for \(t > 0\).
         \item \label{wellposedness:blowupcharacterization} Either $T = \infty$ or \(\limsup_{t \nearrow T} \|F_{A(t)}\|_\infty = \infty\).
     \end{enumerate}
 \end{thm}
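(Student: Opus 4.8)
The plan is to carry out DeTurck's first simplification as in R\aa de \cite{rade}, following the outline given around (\ref{preliminaries:evolutionofcurvature}). The analytic heart is a short-time existence, uniqueness, and smooth-dependence statement for the \emph{modified system}
\[
\partial_t a = -D_{\nabla_{\text{ref}}+a}^\ast\Omega, \qquad \partial_t\Omega = -\Delta_{\nabla_{\text{ref}}+a}\Omega,
\]
for pairs $(a,\Omega)\in\Omega^1(\gothg_E)\oplus\Omega^2(\gothg_E)$ with $a(0)=a_0$ and $\Omega(0)=F_{\nabla_{\text{ref}}+a_0}$, which I would isolate and prove as Theorem \ref{thm:shorttimeexistence}. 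The key structural point is that, with $a$ held fixed, the $\Omega$-equation is linear and strictly parabolic --- its principal part is the rough Laplacian $\nabla_A^\ast\nabla_A$, and its lower-order coefficients depend only on $a$ (through $F_A$) and on the metric --- while the $a$-equation involves no spatial derivatives of $a$ at all, being just the time integral $a(t)=a_0-\int_0^t D^\ast_{\nabla_{\text{ref}}+a(s)}\Omega(s)\,ds$. So a Picard iteration --- solve the linear parabolic $\Omega$-problem with coefficients frozen from the previous $a$, then update $a$ by the integral --- becomes a contraction on a small ball in the parabolic solution space $U(\tau)$ once $\tau$ is chosen small; the smoothing space $U^+(\tau)$ is accommodated by running the $L^2$ parabolic estimates with the usual $t^{1/2}$ weights, so that the solution lies in $H^{k+1}$ for a.e.\ $t>0$ despite $a_0$ being only in $H^k$. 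All constants, and in particular the resulting existence time $\tau$, depend only on $K$, $k$, $\nabla_{\text{ref}}$, and the geometry of $M$.

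Given a solution $(a,\Omega)$ of the modified system, set $A(t)=\nabla_{\text{ref}}+a(t)$. To see that $A$ solves (\ref{ymf}), differentiate $F_A-\Omega$ in time, using the second Bianchi identity together with the two equations of the system; this produces a linear parabolic equation satisfied by $F_A-\Omega$ with vanishing initial data, whence $F_A\equiv\Omega$ by a Gr\"onwall argument, and then $\partial_t a=-D_A^\ast F_A$. For uniqueness within $C^0_{loc}([0,T),H^k)\cap U^+(T)$: the curvature of any such solution of (\ref{ymf}) satisfies (\ref{preliminaries:evolutionofcurvature}), so the pair $(a,F_A)$ solves the modified system, and uniqueness for the latter applies. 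Parabolic bootstrapping then upgrades the regularity: the equation $\partial_t F_A+\Delta_A F_A=0$, interior parabolic $L^p$ and Schauder theory, and the $a$-equation yield $F_{A(t)}\in L^2_{loc}([0,T),H^k)$ and $D_{A(t)}^\ast F_{A(t)}\in L^2_{loc}([0,T),H^{k-1})$, and, after an elliptic bootstrap in a local Coulomb gauge, that $F_{A(t)}$ is in fact smooth for every $t>0$.

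Property (\ref{wellposedness:wellposedness}) follows because the fixed point above depends on $a_0$ in a Lipschitz way in $C^0([0,\tau],H^k)$, with $\tau$ controlled only by the $H^k$ size of the data; one then covers $[0,T_0]$ by finitely many subintervals of this common length and iterates the estimate, which propagates the continuous dependence from $[0,\tau]$ out to $[0,T_0]$. For property (\ref{wellposedness:blowupcharacterization}), let $T$ be the supremum of existence times and glue the short-time solutions into a maximal one; suppose for contradiction that $T<\infty$ and $\Lambda:=\limsup_{t\nearrow T}\|F_{A(t)}\|_\infty<\infty$. Applying interior parabolic estimates to $\partial_t F_A+\Delta_A F_A=0$ on small balls, the pointwise bound on $F_A$ forces $\|F_{A(t)}\|_{H^j}$ to be bounded on $[\eta,T)$ for every $j$ and every $\eta>0$; covering $M$ by Uhlenbeck (Coulomb) gauge patches in which the connection one-form is controlled in $H^{k+1}$ by $\|F_A\|_{H^k}$, with constants uniform in $t$, one obtains a $t$-uniform bound on $A(t)$ in $H^k$ modulo gauge near $T$, hence convergence of $A(t)$, modulo gauge, in $H^k$ as $t\nearrow T$ to a limit $A_T$. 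Restarting the flow from $A_T$ extends the solution past $T$, contradicting maximality; therefore $T=\infty$ or $\limsup_{t\nearrow T}\|F_{A(t)}\|_\infty=\infty$. Finally, property (\ref{wellposedness:gaugeequivalenttosmooth}) is obtained by combining the instantaneous smoothing of the curvature just established with Uhlenbeck's gauge-fixing theorem: for each small $t_0>0$ there is a $u\in H^{k+1}(\mathcal{G}_E)$ with $u(A(t_0))$ smooth, and then $u(A(t))$, being a solution of (\ref{ymf}) with smooth data at time $t_0$, is smooth for $t\geq t_0$ by uniqueness; passing $t_0\to 0^+$ and using that two smooth solutions agreeing up to gauge agree up to a constant gauge transformation, one patches these into a single $u\in H^{k+1}(\mathcal{G}_E)$ valid on all of $(0,T)$.

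I expect the main obstacle to be property (\ref{wellposedness:blowupcharacterization}): converting a pointwise bound on the curvature into an $H^k$ bound on the connection unavoidably breaks gauge invariance, so the local Coulomb gauges must be set up with care and one must check that the constants in Uhlenbeck's lemma do not degenerate as $t\nearrow T$. The other technically heavy point is the precise specification of the spaces $U(\tau)$ and $U^+(\tau)$ needed to make the contraction argument close with exactly the stated regularity and with $\tau$ depending only on the $H^k$ norm of the data.
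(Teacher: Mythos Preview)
Your plan for short-time existence via R\aa de's modified system, and for item (\ref{wellposedness:wellposedness}) by concatenating short intervals, matches the paper (Theorem \ref{thm:shorttimeexistence} and \S A.2).

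For items (\ref{wellposedness:gaugeequivalenttosmooth}) and (\ref{wellposedness:blowupcharacterization}) the paper takes a different route. It first proves a Sobolev-distance estimate (Theorem \ref{thm:Hkestimates}): if $\sup_{[t_1-\tau,t_2]}\|F\|_\infty\le K$, then $\|A(t_2)-A(t_1)\|_{H^{k+1}}$ is controlled by $\rho=\int_{t_1-\tau}^{t_2}\|D^*F\|\,dt$ together with $\|A(t_i)\|_{H^k}$. This single estimate replaces your Uhlenbeck--Coulomb patching, and crucially it involves no gauge ambiguity. For (\ref{wellposedness:gaugeequivalenttosmooth}) the paper argues by \emph{compactness}: approximate $A_0$ by smooth $A_i$, run the flows, apply strong Uhlenbeck compactness at the fixed time $\tau$ to obtain gauges $u_i$ with $u_i(\tilde A_i(\tau))$ converging smoothly, use Theorem \ref{thm:Hkestimates} to propagate smooth convergence of $u_i(\tilde A_i(t))$ back to every $t>0$, and finally observe that $u_i\to u_\infty$ in $H^{k+1}$ with $u_\infty(A(t))$ equal to the smooth limit. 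Item (\ref{wellposedness:blowupcharacterization}) is then three lines: apply (\ref{wellposedness:gaugeequivalenttosmooth}) to pass to a smooth solution, invoke Theorem \ref{thm:Hkestimates} to bound its $H^k$ norm, and restart.

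Your direct approach to (\ref{wellposedness:gaugeequivalenttosmooth}) has a gap. The claim that $F_{A(t)}$ is smooth for every $t>0$ does not follow from interior parabolic theory alone: the equation $\partial_t F+\Delta_A F=0$ has coefficients depending on $A$, which a priori is only in $C^0([0,T),H^k)$, so one cannot bootstrap $F$ past the regularity of the coefficients. To close the loop one must interleave the parabolic gain for $F$ with an elliptic gain for $A$ in local Coulomb gauge, and then patch the resulting local gauges into a global $H^{k+1}$ gauge transformation at each stage of the bootstrap---exactly the delicate global gauge-fixing you anticipated for (\ref{wellposedness:blowupcharacterization}), now surfacing already here. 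The paper's compactness argument avoids this entirely by working with genuinely smooth approximating flows and never asserting smoothness of the rough solution's curvature. Correspondingly, your expectation that (\ref{wellposedness:blowupcharacterization}) is the main obstacle is inverted relative to the paper: the hard analytic content is Theorem \ref{thm:Hkestimates}, after which both (\ref{wellposedness:gaugeequivalenttosmooth}) and (\ref{wellposedness:blowupcharacterization}) are short.
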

The first part of Theorem \ref{thm:wellposedness} is the content of Theorem \ref{thm:shorttimeexistence} in \textsection \ref{radegendim}. Item (\ref{wellposedness:wellposedness}) readily follows from Theorem \ref{thm:shorttimeexistence} and is proven in \textsection \ref{item1proof}. We prove item (\ref{wellposedness:gaugeequivalenttosmooth}) in \textsection \ref{gaugeeqtosmth} using the estimates of Theorem \ref{thm:Hkestimates} from \textsection \ref{sobolevdistest}. The proof of item (\ref{wellposedness:blowupcharacterization}) in \textsection \ref{ltecriterion} is then short given the rest of Theorem \ref{thm:wellposedness}.

\vspace{5mm}

\subsection{R{\aa}de in general dimension} \label{radegendim}
In dimensions two and three, R{\aa}de provided a robust short-time existence theory for Yang-Mills flow with \(H^1\) initial data \cite{radethesis, rade}. Assuming stronger regularity on the initial data, R{\aa}de's method extends straightforwardly to higher dimensions. 
\begin{thm}\label{thm:shorttimeexistence}
     Let \(n \geq 2\), and let \(q \in \R\) satisfy \(q \geq 1\) and \(q > \frac{n}{2} - 1\). Let \(E\) be a \((G, \rho)\)-bundle over a closed Riemannian \(n\)-manifold \(M\). Given a connection \(A_0 \in H^q(\mathcal{A}_E)\), there exists \(\tau_0 > 0\), depending on \(q, \|A_0\|_{H^q},\) \(\nabla_{\text{ref}}\), and the geometry of \(M\), such that the following holds. There is a unique weak solution \(A(t) \) of (\ref{ymf}) with \(A(0) = A_0,\) $a(t) \in C^0([0, \tau_0], H^q),$ and $\left(a(t), F_{A(t)} \right)$ is in the space $U(\tau_0)$ defined in (\ref{shorttimeexistence:definitionofU}) below.
     The curvature satisfies \(F_{A(t)} \in L^2([0, \tau_0], H^q)\).
     The solution \(A(t)\) is smooth on \(M \times [0, \tau_0]\) if \(A_0\) is smooth.
 \end{thm}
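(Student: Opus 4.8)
The plan is to follow R{\aa}de's adaptation \cite{radethesis, rade} of DeTurck's first simplification \cite{deturckfirsttrick}, which replaces the gauge-degenerate equation (\ref{ymf}) by a genuinely parabolic system. Write $A(t) = \nabla_{\text{ref}} + a(t)$ and introduce the would-be curvature $\Omega(t) \in \Omega^2(\mathfrak{g}_E)$ as an \emph{independent} unknown. One seeks a pair $(a,\Omega)$ solving
\begin{align*}
\partial_t a = -D^\ast_{\nabla_{\text{ref}}+a}\,\Omega, \qquad \partial_t \Omega = -\Delta_{\nabla_{\text{ref}}+a}\,\Omega,
\end{align*}
with $a(0) = a_0$ and $\Omega(0) = F_{A_0}$. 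The $\Omega$-equation is strictly parabolic, since the Hodge Laplacian has principal symbol $|\xi|^2\,\mathrm{id}$; splitting $\Delta_{\nabla_{\text{ref}}+a} = \Delta_{\nabla_{\text{ref}}} + \mathcal{L}(a)$, the remainder $\mathcal{L}(a)$ is first order in $\Omega$ with coefficients algebraic in $a$, $F_{\nabla_{\text{ref}}}$, and the Riemann tensor of $g$, plus terms quadratic in $a$. The $a$-equation, by contrast, is not differential in $a$ at all: $D^\ast_{\nabla_{\text{ref}}+a}\Omega = D^\ast_{\nabla_{\text{ref}}}\Omega + \ast[a\wedge\ast\Omega]$, so once $\Omega$ is known, $a$ is recovered by time-integration.

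I would produce the solution by a contraction-mapping argument on a small ball in a parabolic function space $U(\tau)$ chosen so that $\Omega$ lies in a maximal-regularity space built on the reference heat semigroup (roughly $C^0([0,\tau],H^{q-1}) \cap L^2([0,\tau],H^q)$, supplemented by the $\sqrt{t}$-weighted smoothing bounds $\|\sqrt{t}\,\Omega\|_{L^2_tH^{q+1}} \lesssim \|F_{A_0}\|_{H^{q-1}}$), while $a \in C^0([0,\tau],H^q)$. Given $(a,\Omega)$ in such a ball, set $\widetilde\Omega$ to solve the reference heat equation $\partial_t\widetilde\Omega + \Delta_{\nabla_{\text{ref}}}\widetilde\Omega = -\mathcal{L}(a)\Omega$ with data $F_{A_0}$, and $\widetilde a(t) := a_0 - \int_0^t D^\ast_{\nabla_{\text{ref}}+a(s)}\Omega(s)\,ds$. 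Standard $L^2$ maximal regularity and smoothing estimates for $e^{-t\Delta_{\nabla_{\text{ref}}}}$ control $\widetilde\Omega$; the key point for $\widetilde a$ is that the Duhamel-type time-integral gains two spatial derivatives on heat-evolved data, so $\widetilde a$ returns to $C^0([0,\tau],H^q)$ even though $D^\ast\Omega$ is only one derivative below $\Omega$. All nonlinear terms are estimated by the Sobolev multiplication inequality $H^q\cdot H^q \hookrightarrow H^{q-1}$ (valid precisely because $q > \tfrac{n}{2} - 1$) and its weighted relatives, and they carry positive powers of $\tau$; hence for $\tau = \tau_0$ small (depending on $q$, $\|A_0\|_{H^q}$, $\nabla_{\text{ref}}$, and the geometry of $M$) the map $(a,\Omega)\mapsto(\widetilde a,\widetilde\Omega)$ contracts the ball into itself. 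Its unique fixed point is the desired pair, and the asserted regularity $F_{A(t)} \in L^2([0,\tau_0],H^q)$ is the statement that $\Omega \in U(\tau_0)$.

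It remains to identify $\Omega$ with the actual curvature of $A = \nabla_{\text{ref}}+a$, the one genuinely structural step. Set $w := \Omega - F_A$, so $w(0)=0$. Differentiating, $\partial_t F_A = D_A(\partial_t a) = -D_AD_A^\ast\Omega$, while $\partial_t\Omega = -\Delta_A\Omega = -D_AD_A^\ast\Omega - D_A^\ast D_A\Omega$; subtracting and using the Bianchi identity $D_AF_A = 0$ (so $D_A\Omega = D_A w$) gives $\partial_t w + D_A^\ast D_A w = 0$. Then $\frac{d}{dt}\|w\|^2 = -2\|D_A w\|^2 \le 0$ forces $w\equiv 0$, so $\Omega(t) = F_{A(t)}$ and $A$ is a weak solution of (\ref{ymf}). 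Uniqueness in the stated class follows, since any weak solution $A$ with the required regularity yields, via $(a,F_A)$, a solution of the coupled system, which must coincide with the fixed point.

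Finally, for smoothness when $A_0$ is smooth, I would bootstrap. Running the fixed-point argument with $q$ replaced by $q+1$ produces an $H^{q+1}$-class solution on some $[0,\tau_1]\subset[0,\tau_0]$, which by the $H^q$-uniqueness just proved agrees with $A(t)$ there; moreover, once the $H^q$-norm of $A(t)$ is known to be bounded on $[0,\tau_0]$, the $H^{q+1}$-energy estimate is linear in the top-order term with lower-order-bounded coefficients, so Gr\"onwall forbids blow-up of the $H^{q+1}$-norm before $\tau_0$. Iterating, $A \in \bigcap_{j\ge 0} C^0([0,\tau_0], H^{q+j})$, i.e.\ $A(t)$ is spatially smooth uniformly on $[0,\tau_0]$; the two evolution equations then upgrade this to joint smoothness on $M\times[0,\tau_0]$. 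The main obstacle throughout is the bookkeeping in the second paragraph: designing $U(\tau)$ so that the parabolic smoothing of $\Omega$ exactly offsets the derivative lost in reconstructing $a$, while keeping every nonlinear term controllable at the borderline regularity $q > \tfrac{n}{2} - 1$ with a gain of a positive power of $\tau$.
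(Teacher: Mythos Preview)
Your proposal is correct and follows essentially the same approach as the paper: both implement R\aa de's strategy of coupling the ODE for $a$ to the genuinely parabolic equation for an independent $\Omega$, closing a fixed-point argument via Sobolev multiplication (using $q > \tfrac{n}{2}-1$), and then identifying $\Omega = F_A$ a posteriori. Your explicit energy argument for the identification step ($\partial_t w + D_A^\ast D_A w = 0 \Rightarrow \tfrac{d}{dt}\|w\|^2 \le 0$) is exactly the content of the argument the paper defers to \cite[\S B.3]{radethesis}.

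The only notable difference is in the bookkeeping you flag at the end. Rather than your single contraction map in a space ``roughly $C^0_tH^{q-1}\cap L^2_tH^q$ with $\sqrt{t}$-weights,'' the paper works in R\aa de's parabolic Sobolev spaces $H^{r,s}$ and $H_P^{r,s}$ with the specific exponents $\Omega \in H^{\frac12+\varepsilon,\,q-1-2\varepsilon}\cap H^{-\frac12,\,q+1}$ and $a \in H^{\frac12+\varepsilon,\,q-2\varepsilon}\cap H^{\frac12,\,q}$, where $\varepsilon = \tfrac{1}{10}\min\{1,\,q-\tfrac{n}{2}+1\}$. It also splits $(a,\Omega) = (a_1,\Omega_1)+(a_2,\Omega_2)$ into a homogeneous initial-value piece plus an inhomogeneous piece with zero data, applying \cite[Lemma B.5]{radethesis} to the latter; the regularity gain for $a_1$ comes from the identity $(1+\nabla_{\text{ref}}^\ast\nabla_{\text{ref}})\int_0^t\Omega_1 = \int_0^t\Omega_1 + \Omega_0 - \Omega_1 \in C^0_tH^{q-1}$, which makes precise your remark that ``the Duhamel-type time-integral gains two spatial derivatives on heat-evolved data.''
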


 \begin{proof}
     Denote the curvature of the fixed smooth reference $(G, \rho)$-connection $\nabla_{\text{ref}}$ by $F_{\text{ref}},$ and the induced covariant exterior derivative by $D_{\text{ref}}.$ 
     If we set \(\Omega(t) = F_{A(t)}\), then (\ref{ymf}) and 
     the corresponding evolution equation for the curvature can be expressed as the following system (where \(\#\) denotes various multilinear (not necessarily associative) operations whose precise forms will not be germane):
 \begin{empheq}[left=\empheqlbrace]{equation}
  \begin{split}\label{shorttimeexistence:yangmillssystem}
     \hspace{0.6em} \frac{d}{dt} a &+ D_{\text{ref}}^\ast \Omega = a \# \Omega \\
     \frac{d}{dt} \Omega &+ \nabla_{\text{ref}}^\ast \nabla_{\text{ref}} \Omega = F_{\text{ref}} \# \Omega + Rm \# \Omega + \nabla_{\text{ref}} a \# \Omega + a \# \nabla_{\text{ref}} \Omega  + a \# a \# \Omega    \\
     a(0) &= a_0 \\
     \Omega(0) &= \Omega_0.
  \end{split}
\end{empheq} As in \cite[(18)]{radethesis}, we shall show existence/uniqueness of solutions to the above system for a family \(a = a(t)\) of \(\mathfrak{g}_E\)-valued 1-forms and a family \(\Omega = \Omega(t)\) of \(\mathfrak{g}_E\)-valued 2-forms, where \(\Omega\) need not be the curvature of \(\nabla_{\text{ref}} + a\); at the 
end, we will show that 
\(\Omega\) is indeed the curvature of \(\nabla_{\text{ref}} + a\) if \(\Omega_0\) is the curvature of \(\nabla_{\text{ref}} + a_0\), 
just like \cite[pg. 22]{radethesis}. \par
 We next describe the function spaces we shall work in. Let \(\tau_0 > 0\). For real numbers \(r\) and \(s\), we will use the shorthand \(H^r\) to denote the Hilbert Sobolev space \(H^r(T^\ast M \otimes \mathfrak{g}_E)\) or \(H^r(\Lambda^2 T^\ast M \otimes \mathfrak{g}_E)\) and \(H^{r, s}\) to denote \(H^r([0, \tau_0], H^s)\) \cite[\textsection D.1, pg. 38]{radethesis}. We will use \(H_P^{r, s}\) to denote the parabolic subspace of \(H^{r, s}\) \cite[pg. 42]{radethesis}. Set  
 \begin{align*}
     \mu &:= q - \frac{n}{2}+1 > 0 \\
     \varepsilon &:= \frac{\min\left\{1, \mu\right\}}{10} > 0.
 \end{align*} Set
\begin{align}\label{shorttimeexistence:definitionofU}
X &= \left\{(b_0, \Psi_0) \mid b_0 \in H^q, \Psi_0 \in H^{q-1}\right\} \nonumber \\
U(\tau_0) &= \left\{(b, \Psi) \mid b \in H^{\frac{1}{2} + \varepsilon, q - 2\varepsilon} \cap H^{\frac{1}{2}, q}, \Psi \in  H^{\frac{1}{2} + \varepsilon, q-1-2\varepsilon} \cap H^{-\frac{1}{2}, q +1}\right\} \\
U_P(\tau_0) &= \left\{(b, \Psi) \mid b \in H_P^{\frac{1}{2} + \varepsilon, q - 2\varepsilon} \cap H_P^{\frac{1}{2}, q}, \Psi \in  H_P^{\frac{1}{2} + \varepsilon, q-1-2\varepsilon} \cap H_P^{-\frac{1}{2}, q +1}\right\} \nonumber \\
W_P(\tau_0) &= \left\{(b, \Psi) \mid b \in H_P^{-\frac{1}{2} + \varepsilon, q - 2\varepsilon} \cap H_P^{-\frac{1}{2}, q}, \Psi \in H_P^{-\frac{1}{2} + \varepsilon, q - 1 - 2\varepsilon} \cap H_P^{-\frac{1}{2}, q- 1}\right\}. \nonumber
\end{align} Note that in the function space $U(\tau_0),$ $\Psi$ has the space-time regularity of a solution of the heat equation with $H^{q-1}$ initial data, and $b$ has one more spatial derivative than $\Psi$. This is sensible since the curvature of a solution of (\ref{ymf}) obeys a heat-type equation. \par
For \(T \in (0, \infty]\), we also set 
\begin{align}\label{shorttimeexistence:localhilbertspace}
    U^+(T) = \left\{(a, \Omega) \mid  (a, \Omega) \in U(\tau_0) \ \forall \ \tau_0 < T\right\}.
\end{align} We say \((a, \Omega)\) is a solution of (\ref{shorttimeexistence:yangmillssystem}) if \((a, \Omega) \in U(\tau_0)\), \((a(0), \Omega(0)) = (a_0, \Omega_0)\), and \((a, \Omega)\) satisfies for all 
\begin{align*}
    (\varphi, \psi) \in C_c^\infty\left(M \times (0, \tau_0), \Omega^1(\mathfrak{g}_E) \oplus \Omega^2(\mathfrak{g}_E)\right)
\end{align*} that
\begin{align*}
    &\int_0^{\tau_0} \hspace{-0.6em} \int_M \left\langle a, -\frac{\partial \varphi}{\partial t}\right\rangle +  \langle \Omega, D_{\text{ref}} \varphi\rangle =  \int_0^{\tau_0} \hspace{-0.6em} \int_M \langle a \# \Omega, \varphi\rangle \\
    &\int_0^{\tau_0} \hspace{-0.6em} \int_M \left\langle \Omega, -\frac{\partial \psi}{\partial t}\right\rangle + \langle \Omega, \nabla_{\text{ref}}^\ast \nabla_{\text{ref}} \psi \rangle =  \int_0^{\tau_0} \hspace{-0.6em} \int_M \langle \Upsilon, \psi\rangle, 
\end{align*} where \(\Upsilon\) is the RHS of the second equation in (\ref{shorttimeexistence:yangmillssystem}). \par
We now begin the proof of existence of solutions to (\ref{shorttimeexistence:yangmillssystem}). We will solve for \((a, \Omega)\) of the form \((a_1, \Omega_1) + (a_2, \Omega_2)\), where \((a_1, \Omega_1)\) solves a homogeneous initial-value problem and \((a_2, \Omega_2)\) solves an inhomogeneous system. In the sequel, we take \(\tau_0 \leq 1\). First we solve the initial-value problem
 \begin{empheq}[left=\empheqlbrace]{equation}\label{shorttimeexistence:ivp}
  \begin{split}
\frac{d}{dt} a_1 &+ D_{\text{ref}}^\ast \Omega_1 = 0 \\
\frac{d}{dt} \Omega_1  &+ \nabla_{\text{ref}}^\ast \nabla_{\text{ref}} \Omega_1 = 0 \\
a_1(0) &= a_0 \\
\Omega_1(0) &= \Omega_0,
 \end{split}
\end{empheq} with initial data \((a_0, \Omega_0) \in X\) and \((a_1, \Omega_1) \in U(\tau_0)\). In the sequel, \(C\) will denote a positive constant, depending on the geometry of \(M\), \(\nabla_{\text{ref}}\), and \(q\), and \(C\) may increase from line to line. Since \(\nabla_{\text{ref}}\) is smooth, \cite[Prop. D.9]{radethesis} implies that the initial-value problem
\begin{align*}
&\frac{d}{dt} \Omega_1  + \nabla_{\text{ref}}^\ast \nabla_{\text{ref}} \Omega_1 = 0 \\
&\Omega_1(0) = \Omega_0
\end{align*} has a unique weak solution \(\Omega_1(t) \in H^{\frac{1}{2}-r, q-1 + 2r}([0, \tau_0]), r \in \R\). In particular, for each \(\tau_0 \in (0, 1]\), the map \(\Omega_0 \to \Omega_1(t)\) yields operators
\begin{align*}
H^{q-1} &\to H^{\frac{1}{2}-r, q-1 + 2r}([0, \tau_0]) \\
H^{q-1} &\to C^0([0, \tau_0], H^{q-1}),
\end{align*} with norms bounded respectively by \(C\max\{1, \tau_0^r\}\) and \(C\). Thus, \(\Omega_1 \in  H^{\frac{1}{2} + \varepsilon, q-1-2\varepsilon} \cap H^{-\frac{1}{2}, q +1}\). Once we obtain \(\Omega_1\), we solve for \(a_1\) by
\begin{align*}
    a_1 = a_0 - \int_0^t D_0^\ast \Omega_1(s) \, ds \in H^{\frac{1}{2} + \varepsilon, q - 2\varepsilon} \cap H^{\frac{1}{2}, q}.
\end{align*} Overall, we have obtained an operator 
\begin{align*}
    M : X &\to U(\tau_0) \\
    (a_0, \Omega_0) &\to (a_1, \Omega_1),
\end{align*} and by 
 \cite[Prop. D.7 and D.9]{radethesis} we have
\begin{align*}
    \|M\| \leq C \tau_0^{-\varepsilon}.
\end{align*} \par
Now that we have \((a_1, \Omega_1)\), we solve the inhomogeneous system 
\begin{empheq}[left=\empheqlbrace]{equation}\label{inhomogeneoussystem}
  \begin{split}
     \hspace{0.6em}
\partial_t a_2 + D_{\text{ref}}^\ast \Omega_2 &= (a_1 + a_2) \# (\Omega_1 + \Omega_2)  \\
(\partial_t + \nabla_{\text{ref}}^\ast \nabla_{\text{ref}})\Omega_2 &= F_{\text{ref}} \# (\Omega_1 + \Omega_2) + Rm \# (\Omega_1 + \Omega_2) \\ 
&+ \nabla_{\text{ref}} (a_1 + a_2) \# (\Omega_1 + \Omega_2) +  (a_1 + a_2) \# \nabla_{\text{ref}}(\Omega_1 + \Omega_2) \\
&+ (a_1 + a_2) \# (a_1 + a_2) \# (\Omega_1 + \Omega_2) \\
a_2(0) &= 0 \\
\Omega_2(0) &= 0
\end{split}
\end{empheq} via \cite[Lemma B.5]{radethesis}. To do this, we will show that the operators \(L, Q_1, Q_2, Q_3\) from \cite[pg. 15]{radethesis}, given by
\begin{align*}
    L(b, \Psi) &:= \left(\frac{db}{dt} + D_{\text{ref}}^\ast \Psi, \frac{d\Psi}{dt} + \nabla_{\text{ref}}^\ast \nabla_{\text{ref}} \Psi\right) \\
    Q_1(b, \Psi) &:= (0, F_{\text{ref}} \# \Psi + Rm \# \Psi) \\
    Q_2(b, \Psi) &:= (b \# \Psi, b \# \nabla_{\text{ref}} \Psi + \nabla_{\text{ref}} b \# \Psi) \\
    Q_3(b, \Psi) &:= (0, b \# b \# \Psi),
\end{align*} extend to maps
\begin{align*}
    L &: U_P(\tau_0) \to W_P(\tau_0) \\
    Q_1 &: U(\tau_0) \to W_P(\tau_0) \\
    Q_2 &: \text{Sym}^2U(\tau_0) \to W_P(\tau_0) \\
    Q_3 &: \text{Sym}^3U(\tau_0) \to W_P(\tau_0),
\end{align*} with \(L\) invertible, and satisfy the operator norm bounds:
\begin{empheq}[left=\empheqlbrace]{equation}\label{shorttimeexistence:estimatesforexistence}
\begin{split}
    \|L^{-1}\| &\leq C \\
    \|Q_1\| &\leq C \tau_0^{1- \varepsilon} \\
    \|Q_2\| &\leq C \tau_0^{3\varepsilon} \\
    \|Q_3\| &\leq C \tau_0^{4\varepsilon}.
\end{split}
\end{empheq} \par
By \cite[Prop. D.7 and D.8]{radethesis}, \(\frac{d}{dt}\) extends to a bounded, invertible operator
\begin{align*}
    H_P^{\frac{1}{2} + \varepsilon, q - 2\varepsilon} \cap H_P^{\frac{1}{2}, q} \to H_P^{-\frac{1}{2} + \varepsilon, q - 2\varepsilon} \cap H_P^{-\frac{1}{2}, q}
\end{align*}and \(\frac{d}{dt} + \nabla_{\text{ref}}^\ast \nabla_{\text{ref}}\) extends to a bounded, invertible operator
\begin{align*}
    H_P^{\frac{1}{2} + \varepsilon, q-1-2\varepsilon} \cap H_P^{-\frac{1}{2}, q +1} \to H_P^{-\frac{1}{2} + \varepsilon, q - 1 - 2\varepsilon} \cap H_P^{-\frac{1}{2}, q- 1},
\end{align*} and the norms of these operators and their inverses are bounded by \(C\). Moreover, \(D_{\text{ref}}^\ast\) maps
\begin{align*}
     H_P^{-\frac{1}{2}, q +1} \to H_P^{-\frac{1}{2}, q}.
\end{align*} Next, by interpolation
\begin{align*}
    H_P^{\frac{1}{2} + \varepsilon, q-1-2\varepsilon} \cap H_P^{-\frac{1}{2}, q +1} \hookrightarrow H^{-\frac{1}{2} + \varepsilon, q + 1 -2\varepsilon},
\end{align*} and we further have that \(D_{\text{ref}}^\ast\) maps
\begin{align*}
    H^{-\frac{1}{2} + \varepsilon, q + 1 -2\varepsilon} \to H^{-\frac{1}{2} + \varepsilon, q -2\varepsilon}.
\end{align*} Hence \(D_{\text{ref}}^\ast\) is a bounded operator
\begin{align*}
     H_P^{\frac{1}{2} + \varepsilon, q-1-2\varepsilon} \cap H_P^{-\frac{1}{2}, q +1} \to H_P^{-\frac{1}{2} + \varepsilon, q - 2\varepsilon} \cap H_P^{-\frac{1}{2}, q},
\end{align*}with norm bounded by \(C\). 
Since 
\begin{align*}
    L^{-1} = 
    \begin{pmatrix}
        (\frac{d}{dt})^{-1} & -(\frac{d}{dt})^{-1}D_{\text{ref}}^\ast (\frac{d}{dt} + \nabla_{\text{ref}}^\ast \nabla_{\text{ref}})^{-1} \\
        0 & (\frac{d}{dt} + \nabla_{\text{ref}}^\ast \nabla_{\text{ref}})^{-1}
    \end{pmatrix},
\end{align*} we deduce that \(\|L^{-1}\| \leq C\). \par 
Next we bound the norm of \(Q_1\). Since \(F_{\text{ref}}\) and \(Rm\) are smooth in space and constant in time, they are in \(H^{\frac{1}{2} + \varepsilon, q+\frac{n}{2}}\) with \(\|F_{\text{ref}}\|_{H^{\frac{1}{2} + \varepsilon, q+ \frac{n}{2}}} + \|Rm\|_{H^{\frac{1}{2} + \varepsilon, q+\frac{n}{2}}} \leq  C \tau_0^{-\varepsilon}\) \cite[pg. 43]{radethesis}. Also, by interpolation we have \(\Omega \in H^{\frac{1}{2}, q-1}\) \cite[(73)]{radethesis}.
By hypothesis,
\begin{align*}
    &2q+ \frac{n}{2}-1 > 0, \\
    &q - 1 \leq \min\left\{q-1, q + \frac{n}{2},  2q -1\right\}.
\end{align*} It follows from the Sobolev multiplication theorem \cite[Prop. D.7]{radethesis} that \(Q_1\) maps \(H^{\frac{1}{2} + \varepsilon, q + \frac{n}{2}} \times H^{\frac{1}{2}, q - 1} \to H^{-\frac{1}{2} + \varepsilon, q - 1}\). By \cite[(70)]{radethesis},
\begin{align*}
    Q_1(a, \Omega) \in H^{-\frac{1}{2} + \varepsilon, q - 1}
    \begin{cases}
        \inj H^{-\frac{1}{2} + \varepsilon, q-1 - 2\varepsilon} =  H_P^{-\frac{1}{2} + \varepsilon, q-1-2\varepsilon}; \\
        = H_P^{-\frac{1}{2}+\varepsilon, q-1} \inj H_P^{-\frac{1}{2}, q-1}.
    \end{cases}
\end{align*} Moreover, by the last statement of \cite[Prop. D.7]{radethesis}, we have the estimate
\begin{align*}
    \|Q_1(a, \Omega)\|_{W_P(\tau_0)} &\leq C \tau_0(\|F_{\text{ref}}\|_{H^{\frac{1}{2} + \varepsilon, q+\frac{n}{2}}} + \|Rm\|_{H^{\frac{1}{2} + \varepsilon, q+\frac{n}{2}}})\|(0, \Omega)\|_{U(\tau_0)} \\
    &\leq C \tau_0^{1 - \varepsilon}\|(a, \Omega)\|_{U(\tau_0)}.
\end{align*} \par
Next we consider \(Q_2\). By interpolation, \(\Omega \in H^{-\frac{1}{2}+4\varepsilon, q+1-8\varepsilon}\). Since \(8\varepsilon < \min\{1, \mu\}\), we have
the Sobolev multiplication map \(H^{\frac{1}{2}, q} \times H^{-\frac{1}{2} + 4\varepsilon, q+1-8\varepsilon} \to H^{-\frac{1}{2} + \varepsilon, q}\). Hence
\begin{align*}
    &a \# \Omega \in H^{-\frac{1}{2} + \varepsilon, q}
    \begin{cases}
        \inj H^{-\frac{1}{2} + \varepsilon, q - 2\varepsilon} = H_P^{-\frac{1}{2} + \varepsilon, q-2\varepsilon} ; \\
        = H_P^{-\frac{1}{2} + \varepsilon, q}\inj H_P^{-\frac{1}{2}, q}. 
    \end{cases}
\end{align*} Moreover, we have the estimate 
\begin{align*}
    \|(a \# \Omega, 0)\|_{W_P(\tau_0)} \leq C \tau_0^{3\varepsilon}\|(a, \Omega)\|_{U(\tau_0)}^2.
\end{align*} Furthermore, we have \(\nabla_{\text{ref}} a \in H^{\frac{1}{2}, q-1}\) and \(\nabla_{\text{ref}} \Omega \in H^{-\frac{1}{2} + 4\varepsilon, q-8\varepsilon}\). By our assumptions on \(q\) and \(\varepsilon\),
we have the Sobolev multiplication maps \(H^{\frac{1}{2}, q-1} \times H^{-\frac{1}{2} + 4\varepsilon, q-8\varepsilon} \to H^{-\frac{1}{2} +\varepsilon, q-1}\) and \(H^{\frac{1}{2}, q} \times H^{-\frac{1}{2} + 4\varepsilon, q+1-8\varepsilon} \to H^{-\frac{1}{2} + \varepsilon, q-1}\). Hence
\begin{align*}
    \nabla_{\text{ref}} a \# \Omega, \ a \# \nabla_{\text{ref}} \Omega\in H^{-\frac{1}{2} + \varepsilon, q-1}
    \begin{cases}
        \inj H^{-\frac{1}{2} + \varepsilon, q-1-2\varepsilon} = H_P^{-\frac{1}{2} + \varepsilon, q-1-2\varepsilon}; \\
        = H_P^{-\frac{1}{2} + \varepsilon, q-1} \inj H_P^{-\frac{1}{2}, q-1}.
    \end{cases}
\end{align*} Moreover, we have the bound
\begin{align*}
    \|(0, \nabla_{\text{ref}} a \# \Omega + a \# \nabla_{\text{ref}} \Omega)\|_{W_P(\tau_0)} \leq C\tau_0^{3\varepsilon} \|(a, \Omega)\|_{U(\tau_0)}^2.
\end{align*} Overall, we have
\begin{align*}
    \|Q_2\| \leq C\tau_0^{3\varepsilon}.
\end{align*} \par 
Finally, we consider \(Q_3\). We have \(a \in H^{\frac{1}{2} + \varepsilon, q-2\varepsilon}\), and by interpolation \(\Omega \in H^{-\frac{1}{2} + 3\varepsilon, q + 1 -6\varepsilon}\). By our assumptions on \(q\) and \(\varepsilon\),
 we have the Sobolev multiplication maps \(H^{\frac{1}{2} + \varepsilon, q-2\varepsilon} \times H^{-\frac{1}{2} + 3\varepsilon, q+1-6\varepsilon} \to H^{-\frac{1}{2} + \varepsilon, q-2\varepsilon}\) and \(H^{\frac{1}{2} + \varepsilon, q - 2\varepsilon} \times H^{-\frac{1}{2} + \varepsilon, q-2\varepsilon} \to H^{-\frac{1}{2} + \varepsilon, q - 1}\). Hence
\begin{align*}
    &a \# a \# \Omega \in H^{-\frac{1}{2} + \varepsilon, q-1}
    \begin{cases}
        \inj H^{-\frac{1}{2} + \varepsilon, q-1-2\varepsilon} = H_P^{-\frac{1}{2} + \varepsilon, q - 1 -2\varepsilon} ; \\
        = H_P^{-\frac{1}{2} + \varepsilon, q-1} \inj H_P^{-\frac{1}{2}, q-1}.
    \end{cases}
\end{align*}Moreover, we have the bound 
\begin{align*}
    \|Q_3\| \leq C \tau_0^{4\varepsilon}.
\end{align*}\par 
It now follows from (\ref{shorttimeexistence:estimatesforexistence}) that the operators \(A_0, \dots, A_3\) from \cite[pg. 19]{radethesis}, which are given by applying the $Q_i$ to $(a_1, \Omega_1)$ and $(a_2, \Omega_2)$ such that $A_j$ is $j$-homogeneous in $(a_2, \Omega_2),$ satisfy
\begin{align*}
    \|A_0\| &\leq C(\tau_0^{1-\varepsilon}\tau_0^{-\varepsilon} + \tau_0^{3\varepsilon}\tau_0^{-2\varepsilon} + \tau_0^{4\varepsilon}\tau_0^{-3\varepsilon})(1 + \|(a_0, \Omega_0)\|_X)^3 \\
    &\leq C\tau_0^\varepsilon(1 + \|(a_0, \Omega_0)\|_X)^3 \\
    \|A_1\|  &\leq C(\tau_0^{1-\varepsilon} + \tau_0^{3\varepsilon}\tau_0^{-\varepsilon} + \tau_0^{4\varepsilon}\tau_0^{-2\varepsilon})(1 + \|(a_0, \Omega_0)\|_X)^2 \\
    &\leq C\tau_0^{2\varepsilon}(1 + \|(a_0, \Omega_0)\|_X)^2 \\
    \|A_2\| &\leq C(\tau_0^{3\varepsilon} + \tau_0^{4\varepsilon}\tau_0^{-\varepsilon})(1 + \|(a_0, \Omega_0)\|_X) \\
    &\leq C \tau_0^{3\varepsilon}(1 + \|(a_0, \Omega_0)\|_X) \\
    \|A_3\| &\leq C\tau_0^{4\varepsilon}.
\end{align*} Hence the left-hand side of \cite[(24), pg. 18]{radethesis} is bounded above by
\begin{align*}
     C\tau_0^{\varepsilon}(m^{-1}(1 + \|(a_0, \Omega_0)\|_X)^3 + (1+ \|(a_0, \Omega_0)\|_X)^2 + m(1 + \|(a_0, \Omega_0)\|_X) + m^2).
\end{align*} Therefore, if we take \(m\) equal to say \(\tau_0^{-\frac{\varepsilon}{4}}\) and
\begin{align*}
    \tau_0 < \frac{(C(1 + \|(a_0, \Omega_0)\|_X)^3)^{-\frac{2}{\varepsilon}}}{2\|L^{-1}\|},
\end{align*} then by \cite[Lemma B.5]{radethesis}, the inhomogeneous system (\ref{inhomogeneoussystem}) has a solution \((a_2, \Omega_2) \in U_P(\tau_0)\). Moreover, we have the bound \(\|(a_2, \Omega_2)\|_{U_P(\tau_0)} \leq m\), and this solution is unique in \(B_m(U_P(\tau_0))\). Since \(m \to \infty\) as \(\tau_0 \searrow 0\), the same argument as on \cite[pg. 21]{radethesis} yields uniqueness of solutions to the system (\ref{shorttimeexistence:yangmillssystem}). \par We next follow the argument on \cite[pg. 20-21]{radethesis} to show that
\begin{align*}
    a &\in C^0([0, \tau_0], H^q) \\
    \Omega &\in C^0([0, \tau_0], H^{q-1}) \cap H^{0, q}.
\end{align*} By \cite[Prop. D.9]{radethesis}, \(\Omega_1 \in C^0([0, \tau_0], H^{q-1}) \cap H^{0, q}\). Consequently, it follows from integrating (\ref{shorttimeexistence:ivp}) in time that
\begin{align*}
    (1 + \nabla_{\text{ref}}^\ast \nabla_{\text{ref}})\int_0^t \Omega_1(s) \, ds = \int_0^t \Omega_1(s) \, ds + \Omega_0 - \Omega_1 \in C^0([0, \tau_0], H^{q-1}).
\end{align*} Thus by \cite[Prop. D.1]{radethesis}, \(\int_0^t \Omega_1(s) \, ds \in C^0([0, \tau_0], H^{q+1})\), whence \(a_1 \in C^0([0, \tau_0], H^q)\). Next we consider the regularity of \((a_2, \Omega_2)\). Since \((a, \Omega) \in U(\tau_0)\), the Sobolev multiplication maps obtained above when estimating the norms of \(Q_1, Q_2, Q_3\) imply that
\begin{align*}
    &a \# \Omega \in H^{-\frac{1}{2} + \varepsilon, q} \\
    & (F_{\text{ref}} + Rm)\# \Omega, (\nabla_{\text{ref}} a) \# \Omega, a \# \nabla_{\text{ref}} \Omega, a \# a \# \Omega \in H^{-\frac{1}{2} + \varepsilon, q-1}. 
\end{align*} Hence
\begin{align*}
    &\frac{d}{dt} a_2 + D_{\text{ref}}^\ast \Omega_2 \in H^{-\frac{1}{2} + \varepsilon, q} \\
    &\frac{d}{dt} \Omega_2 + \nabla_{\text{ref}}^\ast \nabla_{\text{ref}} \Omega_2 \in H^{-\frac{1}{2} + \varepsilon, q-1}.
\end{align*} Thus by \cite[Prop. D.8]{radethesis}, \(\Omega_2 \in  H^{\frac{1}{2} + \varepsilon, q-1} \cap  H^{-\frac{1}{2} + \varepsilon, q+1}\). By Sobolev embedding \cite[(74)]{radethesis}, \(\Omega_2 \in C^0([0, \tau_0], H^{q-1}) \supset  H^{\frac{1}{2} + \varepsilon, q-1}\). By interpolation \(\Omega_2 \in H^{\varepsilon, q} \subset H^{0, q}\). Furthermore, \(D_{\text{ref}}^\ast \Omega_2 \in H^{-\frac{1}{2} + \varepsilon, q}\), so that by integrating in time and Sobolev embedding, \(a_2 \in C^0([0, \tau_0], H^q)\). Thus we deduce that \((a, \Omega) = (a_1, \Omega_1) + (a_2, \Omega_2)\) has the claimed regularity properties. By bootstrapping, \((a, \Omega)\) is smooth on \(M \times [0, \tau_0]\) if \((a_0, \Omega_0)\) is smooth. \par
Finally, it follows from the implicit function theorem \cite[Proof of Lemma B.5, pg. 19]{radethesis} that given \(K > 0\), there exists a uniform \(\tau_0 > 0\) such that for all \((a_0, \Omega_0) \in B_K(0) \subset X\), the maps
\begin{align}
    &Y_1 : B_K(0) \to U(\tau_0) \\
    &Y_2 : B_K(0) \to C^0([0, \tau_0], X) \label{shorttimeexistence:smoothdependence}
\end{align} sending initial data to the corresponding solution of (\ref{shorttimeexistence:yangmillssystem}) are smooth. \par 
We still need to show that if \(\Omega_0 = F_{A_0}\), then \(\Omega(t) = F_{A(t)}\). We just need to verify that if \(q > \frac{n}{2}-1\), then the curvature of an \(H^q\) connection is in \(H^{q-1}\), so since we also have \(q \geq 1\), the argument in section B.3 of \cite{radethesis} holds. We expand
\begin{align*}
    F_{A_0} = F_{\text{ref}} + D_{\text{ref}}a_0 + a_0 \wedge a_0,
\end{align*} so since \(q > \frac{n}{2}-1\), the \(a_0 \wedge a_0\) term is in \(H^{q-1}\), and thus the claim follows.
 \end{proof}

\vspace{5mm}

\subsection{Proof of item (\ref{wellposedness:wellposedness}) of Theorem \ref{thm:wellposedness}} \label{item1proof}
 Since \(T_0 < \min\{T, \infty\}\), we may set
   \begin{align*}
       K := \sup_{0 \leq t \leq T_0} \|A(t)\|_{H^q}+1 < \infty.
   \end{align*} By Theorem \ref{thm:shorttimeexistence}, there exists \(\tau > 0\) such that for any connection \(B_0 \in H^q\) with \(\|B_0\|_{H^q} \leq K\), the solution of (\ref{ymf}) starting at \(B_0\) exists on \([0, \tau]\). By shrinking \(\tau\), we may WLOG assume \(\frac{T_0}{\tau}\) is an integer \(N \geq 1\). \par 
   Let \(\varepsilon > 0\). By the short-time well-posedness (\ref{shorttimeexistence:smoothdependence}),  we have the following. For each integer \(0 \leq i \leq N-1\), there exists an \(H^q\) neighborhood \(U_i\) of \(A(i\tau)\) such that for any \(B_0 \in U_i\), the solution \(B(t)\) of (\ref{ymf}) with \(B(0) = B_0\) exists on \([0, \tau]\) and satisfies 
   \begin{align*}
       \sup_{i\tau \leq t \leq (i+1)\tau} \|B(t-i\tau) - A(t)\|_{H^q} < \varepsilon.
   \end{align*} If \(N = 1\), then this is precisely item (\ref{wellposedness:wellposedness}) of Theorem \ref{thm:wellposedness} and we are done. If \(N \geq 2\), we have by (\ref{shorttimeexistence:smoothdependence}) again that we may choose a neighborhood \(V_{N-2} \subset U_{N-2}\) of \(A((N-2)\tau)\) such that for 
   \(B_0 \in V_{N-2}\), \(B(\tau) \subset V_{N-1} := U_{N-1}\). By induction and (\ref{shorttimeexistence:smoothdependence}), we obtain, for \(0 \leq i \leq N-2\), neighborhoods \(V_i \subset U_i\) of \(A(i\tau)\) such that for \(B_0 \in V_i\), \(B(\tau) \in V_{i+1}\). \par
   Let \(\delta > 0\) be small enough so that \(B_\delta(A_0) \subset V_0\). We claim that this choice of \(\delta\) yields item (\ref{wellposedness:wellposedness}). To see this, let \(A_0' \in B_{\delta}(A_0)\). Then the solution of (\ref{ymf}) \(A'(t)\) starting at \(A_0'\) exists on \([0, \tau]\), \(\sup_{0 \leq t \leq \tau} \|A'(t) - A(t)\|_{H^q} < \varepsilon\), and \(A'(\tau) \subset V_1\). Then by definition of \(V_i, U_i\), we have by induction that for \(0 \leq i \leq N\), \(A'(t)\) exists on \([0, i\tau]\), \(A'(i\tau) \in V_i\), and \(\sup_{0 \leq t \leq i\tau} ||A'(t) - A(t)\|_{H^q} < \varepsilon\). Taking \(i = N\), and recalling that \(N\tau = T_0\), we deduce item (\ref{wellposedness:wellposedness}) of Theorem \ref{thm:wellposedness} in general, as desired.


\vspace{5mm}

\subsection{Sobolev-distance estimate under (\ref{ymf})} \label{sobolevdistest}

To prove items (\ref{wellposedness:gaugeequivalenttosmooth}) and (\ref{wellposedness:blowupcharacterization}) of Theorem \ref{thm:wellposedness}, we will use the following adaptation of \cite[Prop. 3.3]{instantons} to higher dimensions. We note that this result does not require compactness of $M$. 
\begin{thm}\label{thm:Hkestimates}
    Let \(k\) be an integer such that
    \begin{align*}
        k \geq 
        \begin{cases}
            0 & n \leq 4 \\
            \frac{n}{2} - 1 & n \geq 5.
        \end{cases}
    \end{align*}
   
    Suppose that \(A(t)\) is an \(H^q\) solution of (\ref{ymf}), for \(q > k\), on $\LB 0, T \right)$ such that for times \(t_1 < t_2 \in [0, T)\) and constants positive constants \(\tau, K\) we have
    \begin{align}\label{Hkestimates:curvaturebound}
        \sup_{t \in [t_1 - \tau, t_2]} \|F(t)\|_\infty \leq K.
    \end{align} Set
    \begin{align*}
        \rho := \int_{t_1 - \tau}^{t_2} \|D^\ast F(t)\| \, dt .
    \end{align*} 
    Given $\Omega \Subset M,$ there exists a constant \(C_{(\ref{Hkestimates:Hkbounds})} \geq 1\), depending on \(k, \tau, K\), and the geometry of $M$ near $\Omega,$ such that
    \begin{align}\label{Hkestimates:Hkbounds}
        \|A(t_2) - A(t_1)\|_{H^{k+1}(\Omega)} \leq C_{(\ref{Hkestimates:Hkbounds})}\left(1+\rho^{(k+2)!} \right) \left(1 + \|A(t_i)\|_{H^k}^{(k+1)!} \right) \rho, \quad i = 1, 2.
    \end{align}
\end{thm}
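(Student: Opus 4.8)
Everything reduces to a localized interior parabolic estimate for the \emph{tension field} $\eta := D_A^\ast F_A = -\partial A/\partial t$. Integrating the flow equation gives $A(t_2)-A(t_1) = -\int_{t_1}^{t_2}\eta(t)\,dt$, so the $L^2$ part of the bound is immediate,
\begin{equation*}
\|A(t_2)-A(t_1)\|_{L^2(\Omega)} \le \int_{t_1}^{t_2}\|\eta(t)\|\,dt \le \rho,
\end{equation*}
and for each $1\le j\le k+1$ it suffices to estimate $\int_{t_1}^{t_2}\|\nabla_{\text{ref}}^{(j)}\eta(t)\|_{L^2(\Omega)}\,dt$. The key structural fact is that $\eta$ solves a \emph{linear} parabolic equation with coefficients controlled by $K$ and the geometry. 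Indeed, from $\partial_t F_A = -\Delta_A F_A = -D_A D_A^\ast F_A = -D_A\eta$ (using the Bianchi identity $D_AF_A=0$), together with $\partial_t D_A = [\,\cdot\,,\partial_tA]\# = -\eta\#(\cdot)$, one computes $\partial_t\eta + D_A^\ast D_A\eta = \eta\#F_A$. Moreover the algebraic identity $D_A^\ast D_A^\ast F_A = \pm\,{*}\,D_AD_A({*}F_A) = \pm\,{*}[F_A\wedge{*}F_A]=0$ (in an orthonormal coframe, ${*}[F_A\wedge{*}F_A]=\sum_{i<j}[F_{ij},F_{ij}]\,dV=0$) shows $D_A^\ast\eta=0$, hence $D_AD_A^\ast\eta=0$ and $\Delta_A\eta = D_A^\ast D_A\eta$. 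The Weitzenböck formula for $\mathfrak g_E$-valued $1$-forms then yields
\begin{equation*}
\frac{\partial\eta}{\partial t} + \nabla_A^\ast\nabla_A\eta = F_A\#\eta + \operatorname{Ric}\#\eta,
\end{equation*}
a heat-type equation, linear in $\eta$, with zeroth-order coefficient bounded by $C(K)$ plus the geometry of $M$ near $\Omega$.

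\textbf{The main lemma.} The heart of the proof is: for $0\le j\le k+1$,
\begin{equation*}
\int_{t_1}^{t_2}\big\|\nabla_{\text{ref}}^{(j)}\eta(t)\big\|_{L^2(\Omega)}\,dt \;\le\; C\big(1+\rho^{a_j}\big)\big(1+\|A(t_i)\|_{H^k}^{b_j}\big)\,\rho
\end{equation*}
for suitable exponents $a_j,b_j$, with $C$ depending on $j,\tau,K$ and the geometry. This is proved by induction on $j$. For the step, apply $\nabla_A^{(j)}$ to the $\eta$-equation; the commutator $[\nabla_A^{(j)},\nabla_A^\ast\nabla_A]$ and $\nabla_A^{(j)}(F_A\#\eta+\operatorname{Ric}\#\eta)$ produce terms of the schematic form $\sum_{i\le j}\nabla^{(i)}F_A\#\nabla^{(j-i)}\eta$ and $\sum_{i\le j}\nabla^{(i)}\mathrm{Rm}\#\nabla^{(j-i)}\eta$. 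Pairing with $\varphi^2\nabla_A^{(j)}\eta$ for a fixed cutoff $\varphi$ adapted to $\Omega\Subset\Omega'\Subset M$, integrating by parts, and absorbing the top gradient term yields a differential inequality controlling $\frac{d}{dt}\int\varphi^2|\nabla_A^{(j)}\eta|^2 + \int\varphi^2|\nabla_A^{(j+1)}\eta|^2$ by $\int(\varphi^2+|\nabla\varphi|^2)\sum_{i\le j}|\nabla_A^{(i)}\eta|^2$ plus the lower-order terms. Here one uses two auxiliary controls: (i) parabolic smoothing for $\partial_tF_A+\Delta_AF_A=0$ off the warm-up window, together with $\|F\|_\infty\le K$, bounds $\sup_{[t_1,t_2]}\|\nabla_{\text{ref}}^{(i)}F_A(t)\|_{L^2(\Omega)}$ for $i\le k$ in terms of $\tau,K$, the geometry, and $\|A(t_i)\|_{H^k}$ (the $H^k$ norm entering when converting $\nabla_A$-derivatives into $\nabla_{\text{ref}}$-derivatives); and (ii) the level-$(k-1)$ form of the theorem, which bounds $\sup_{[t_1,t_2]}\|A(t)\|_{H^k}\le\|A(t_1)\|_{H^k}+(\text{polynomial in }\rho)$. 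The essential trick for keeping the \emph{leading} power of $\rho$ equal to one is not to integrate the differential inequality from a fixed slice but to average the Duhamel/smoothing representation of $\nabla_A^{(j)}\eta(t)$ over $s$ in the warm-up window $[t-\tau,t-\tau/2]$ and then integrate in $t\in[t_1,t_2]$; since $\int_{t_1-\tau}^{t_2}\|\eta(s)\|\,ds=\rho$, this gives $\int_{t_1}^{t_2}\|\nabla^{(j)}\eta\|\,dt\lesssim(\tau^{-1/2}+\tau^{1/2})\rho$ at leading order, the higher powers $\rho^{a_j}$ coming from the lower-order terms and from substituting control (ii). Summing the lemma over $j\le k+1$ and using $\nabla_{\text{ref}}^{(j)}(A(t_2)-A(t_1))=-\int_{t_1}^{t_2}\nabla_{\text{ref}}^{(j)}\eta\,dt$ gives (\ref{Hkestimates:Hkbounds}) for $i=1$.

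\textbf{Base of the induction and the $i=1\leftrightarrow i=2$ swap.} At the minimal admissible value $k_0$ (which is $0$ if $n\le4$ and $\lceil n/2-1\rceil$ if $n\ge5$) there is no level $k_0-1$ supplying control of $\sup_{[t_1,t_2]}\|A(t)\|_{H^{k_0}}$, so one runs a continuity argument: let $t^\ast$ be the supremum of times $t$ with $\|A(s)-A(t_1)\|_{H^{k_0}}\le\Lambda$ for all $s\in[t_1,t]$; on $[t_1,t^\ast]$ the coefficient bounds hold with $\|A(s)\|_{H^{k_0}}\le\|A(t_1)\|_{H^{k_0}}+\Lambda$, so the lemma closes and forces $\|A(t)-A(t_1)\|_{H^{k_0+1}}\le C\rho$ there; for $\rho$ small this gives $t^\ast=t_2$ by openness. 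For general $\rho$ one first subdivides $[t_1,t_2]$ into $O(\rho)$ subintervals on each of which $\int\|\eta\|\,dt\le\rho_0$ (the previous subinterval serving as the warm-up for the next) and composes the per-subinterval estimates; the composition of the resulting polynomial-in-$\rho$ bounds, together with the nested substitution of the level-$(k-1)$ estimate into the level-$k$ coefficient bounds, is what produces the factorial exponents $(k+1)!$ and $(k+2)!$ in (\ref{Hkestimates:Hkbounds}). Finally, the case $i=2$ follows from $i=1$ via $\|A(t_1)\|_{H^k}\le\|A(t_2)\|_{H^k}+\|A(t_2)-A(t_1)\|_{H^k}$: substitute the already-proved $i=1$ bound for the last term and re-solve the polynomial inequality, the factorial exponents having been chosen generously enough for this algebra to close.

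\textbf{Expected main obstacle.} The analytic ingredients—interior parabolic smoothing, the Weitzenböck formula, Kato-type inequalities, and the identity $D_A^\ast D_A^\ast F_A=0$—are all standard; the difficulty is entirely bookkeeping. One must arrange the web of dependencies ($\|\nabla^{(j)}\eta\|$ needs $\|\nabla^{(i)}F_A\|$, which needs $H^k$-control of the flow, which needs the level-$(k-1)$ estimate) so that it is genuinely non-circular, and track the polynomial degrees through the induction on $j$, the induction on $k$, the subinterval composition, and the $i=1\leftrightarrow i=2$ swap carefully enough that the factorial exponents actually suffice. This is carried out essentially as in \cite[Prop.~3.3]{instantons}, now in general dimension and without assuming $M$ compact (only the geometry near $\Omega$ enters).
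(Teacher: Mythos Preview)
Your broad strategy—parabolic smoothing for $\eta = D_A^\ast F_A$ to obtain $\int_{t_1}^{t_2}\|\nabla_A^{(j)}\eta\|\,dt \lesssim \rho$, then conversion of $\nabla_A$-derivatives to $\nabla_{\text{ref}}$-derivatives—is the same as the paper's. The paper cites the smoothing step from \cite[Prop.~3.2]{instantons} and \cite[Lemma~2.2]{waldronuhlenbeck}, obtaining $\int f_r\,dt \le C\rho$ with $f_r(t)=\max_{i\le r}\|\nabla^{(i)}D^\ast F(t)\|_{L^\infty(\Omega_1)}$; you re-derive the $\eta$-evolution equation to get there, which is equivalent.

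The execution of the conversion step differs. The paper uses \emph{no} induction on $k$, \emph{no} continuity argument at the base level, and \emph{no} subdivision of $[t_1,t_2]$. Instead it introduces a graded norm $\|B\|_{X_r}=\max_{0\le i\le r}\|\nabla_{\text{ref}}^{(i)}B\|_{L^{q_i}}$ with $q_i=n/(i+1)$ for $i\le n/2-1$ and $q_i=2$ afterwards. After writing $\nabla_{\text{ref}}^{(r)}\eta=\sum\nabla_{\text{ref}}^{(\ell_1)}A\#\cdots\#\nabla_{\text{ref}}^{(\ell_j)}A\#\nabla_A^{(m)}\eta$, one applies H\"older with these exact exponents, placing the $\nabla_A^{(m)}\eta$ factor in $L^\infty$ and each $\nabla_{\text{ref}}^{(\ell_i)}A$ in $L^{q_{\ell_i}}$; a combinatorial check on the index sums $\sum(\ell_i+1)\le r$ shows the exponents close. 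This yields $\|\nabla_{\text{ref}}^{(r)}(A(s_2)-A(s_1))\|_{q_r}\le C\big(1+\sup_s\|A(s)\|_{X_{r-1}}^r\big)\!\int f_r$, valid for \emph{arbitrary} $\rho$, and a single induction on $r$ then produces the factorials directly (each step raises the polynomial degree by a factor $\sim r$). Past $r=\lfloor n/2\rfloor-1$ one switches from $X_r$ to $H^r$ via Sobolev embedding.

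Your continuity-plus-subdivision scheme would close, but it is more circuitous, and your attribution of the factorials to subinterval composition is not the mechanism at work in the paper. The $X_r$-norm device is precisely the ``bookkeeping'' you identify as the obstacle; it sidesteps both the small-$\rho$ bootstrap and the outer induction on the regularity level.
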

\begin{proof}
By item (\ref{wellposedness:wellposedness}) of Theorem \ref{thm:wellposedness}, it suffices to prove (\ref{Hkestimates:Hkbounds}) when \(A(t)\) is smooth. The argument of \cite[Prop. 3.3]{instantons} works for \(n \leq 4\). In general dimension, we do the following. Let \(r \geq 0\) be an integer, and set
\begin{align*}
    f_r(t) := \max_{0 \leq i \leq r} \|\nabla^{(i)} D^\ast F(t)\|_{L^\infty(\Omega_1)}.
\end{align*} By the curvature bound (\ref{Hkestimates:curvaturebound}), one can easily pass from an $L^1$ bound on $\| D^*F(t) \|$ to supremum bounds on all higher derivatives in the manner of \cite[Prop. 3.2]{instantons} or \cite[Lemma 2.2]{waldronuhlenbeck}, to obtain
\begin{align}\label{Hkestimates:rhobound}
    \int_{s_1}^{s_2} f_r(t) \, dt \leq C_{r, \tau, K, M} \rho, \quad t_1 \leq s_1 < s_2 \leq t_2.
\end{align}
We now use (\ref{Hkestimates:rhobound}) to estimate $A(t)$ in the Sobolev distances. Since \(\Omega\) is compactly contained in \(M\), there exists 
an open set \(\Omega_1\) such that \(\Omega\) has \(C^1\) boundary and \(\Omega \Subset \Omega_1 \Subset M\). In the sequel, all Lebesgue/Sobolev spaces will be on \(\Omega_1\) unless otherwise noted. Let
     \begin{align*}
         q_r &:=
         \begin{cases}
             \frac{n}{r+1} & 0 \leq r \leq \frac{n}{2}-1 \\
             2 & r > \frac{n}{2}-1.
         \end{cases} \\
        \tilde{r} &:= \max\left\{r, \frac{n}{2}\right\}.
     \end{align*}
For \(0 \leq r \leq \frac{n}{2}-1\), define the norm
    \begin{align*}
        \|B\|_{X_r} := \max_{0 \leq i \leq r} \|\nabla_{\text{ref}}^{(i)} B\|_{q_i}.
    \end{align*}
    We will first establish the estimate
    \begin{align}\label{Hkestimates:lowinductionestimate}
        \|\nabla_{\text{ref}}^{(r)}(A(s_2) - A(s_1))\|_{q_r} \leq C_{r, \Omega_1} \left( 1 + \sup_{s_1 \leq s \leq s_2} \|A(s)\|_{X_{r-1}}^r \right) \int_{s_1}^{s_2} f_r(s) \, ds, \ \ t_1 \leq s_1 < s_2 \leq t_2,
    \end{align}
    for \(0 \leq r \leq \frac{n}{2}-1\), and the estimate
    \begin{align}\label{Hkestimates:highinductionestimate}
        \|\nabla_{\text{ref}}^{(r)}(A(s_2) - A(s_1))\|_2 \leq C_{r, \Omega_1} \left(1 + \sup_{s_1 \leq s \leq s_2} \|A(s)\|_{H^{r-1}}^r \right) \int_{s_1}^{s_2} f_r(s) \, ds, \ \  t_1 \leq s_1 < s_2 \leq t_2,
    \end{align} for \(r > \frac{n}{2}-1\).
    
   We compute
    \begin{align*}
        \nabla_{\text{ref}}^{(r)}(A(s_2) - A(s_1)) = \int_{s_1}^{s_2} \frac{d}{dt} \nabla_{\text{ref}}^{(r)} A(t) \, dt &= -\int_{s_1}^{s_2} \nabla_{\text{ref}}^{(r)} D^\ast F(t)  \,dt.
    \end{align*}  
    For all \(r \geq 0\), we have
    \begin{align}\label{Hkestimates:expression}
        \nabla_{\text{ref}}^{(r)} D^\ast F = \sum \nabla_{\text{ref}}^{(\ell_1)} A \# \cdots \# \nabla_{\text{ref}}^{(\ell_j)} A \# \nabla_A^{(m)} D^\ast F,
    \end{align} where \(j\) and the indices \(m, \ell_1, \dots, \ell_j\) are nonnegative and satisfy 
    \begin{align}\label{Hkestimates:indexsum}
        \sum_{i=1}^j \ell_i + j + m\leq r.
    \end{align}
The \(k = 0\) case of (\ref{Hkestimates:expression}) is clear, and we have
    \begin{align*}
        \nabla_{\text{ref}}\sum \nabla_{\text{ref}}^{(\ell_1)} A \# \cdots \# \nabla_{\text{ref}}^{(\ell_j)} A \# \nabla_A^{(m)} D^\ast F = &\sum \nabla_{\text{ref}} \left( \nabla_{\text{ref}}^{(\ell_1)} A \# \cdots \# \nabla_{\text{ref}}^{(\ell_j)} A \right) \# \nabla_A^{(m)} D^\ast F \\
        &+ \sum \nabla_{\text{ref}}^{(\ell_1)} A \# \cdots \# \nabla_{\text{ref}}^{(\ell_j)} A \# \nabla_{\text{ref}} \nabla_A^{(m)} D^\ast F.
    \end{align*} Using the Leibniz rule and the identity \(\nabla_{\text{ref}} = \nabla_A + A \#\) on the second term, we can obtain (\ref{Hkestimates:expression}) by induction.
    
Next, we will use the Sobolev and H\"older's inequalities to prove (\ref{Hkestimates:lowinductionestimate}) and (\ref{Hkestimates:highinductionestimate}).
      Now, by (\ref{Hkestimates:expression}) and H{\"o}lder's inequality, we have
     \begin{align*}
         \|\nabla_{\text{ref}}^{(r)} D^\ast F(s) \|_{q_r} &\leq \sum \|\nabla_{\text{ref}}^{(\ell_1)} A \# \cdots \# \nabla_{\text{ref}}^{(\ell_j)} A \# \nabla_A^{(m)} D^\ast F\|_{q_r} \\
        &\leq C_{\Omega_1}\left(1 + \sum_{j \geq 1} \|\nabla_{\text{ref}}^{(\ell_1)} A \# \cdots \# \nabla_{\text{ref}}^{(\ell_j)} A\|_{q_r} \right)f(s),
     \end{align*}
     where the indices are still subject to (\ref{Hkestimates:indexsum}).
     We will now estimate the individual terms $\|\nabla_{\text{ref}}^{(\ell_1)} A \# \cdots \# \nabla_{\text{ref}}^{(\ell_j)} A\|_{q_r}$ appearing in the sum for $j \geq 1.$
     
     \begin{claim}
     For each \(r \geq 0\), we have
    \begin{align}\label{Hkestimates:sumclaim}
        \sum_{\ell_i > r-1- \frac{n}{2}} \frac{n-2(\tilde{r}-1 - \ell_i)}{2n}
        \begin{cases}
            \leq \frac{1}{q_r}, & 0 \leq r \leq \frac{n}{2}, \text{ or } r > \frac{n}{2} \text{ and } j = 1, \\
            < \frac{1}{q_r}, & r > \frac{n}{2} \text{ and } j \geq 2.
        \end{cases}
    \end{align}
    \end{claim}
    \begin{claimproof}
    \emph{Case 1.} \(r \leq \frac{n}{2}\). By definition of \(\tilde{r}\), We have
    \begin{align*}
        \sum_{\ell_i > r - 1 - \frac{n}{2}} \frac{n-2(\tilde{r}-1 - \ell_i)}{2n} & = \sum_{i = 1}^j \frac{\ell_i+1}{n}.
    \end{align*}
    In view of (\ref{Hkestimates:indexsum}), we have 
        \begin{align*}
        \sum_{i = 1}^j \frac{\ell_i+1}{n} \leq \frac{r}{n} 
        \begin{cases}
            \leq \frac{r+1}{n} = \frac{1}{q_r} & r \leq \frac{n}{2}- 1, \\
            \leq \frac{1}{2} = q_r & \frac{n}{2}-1 < r \leq \frac{n}{2}.
        \end{cases}
    \end{align*}
        \emph{Case 2.} \(r > \frac{n}{2} \text{ and } j = 1\). Then \(\ell_j = r-1\), so
    \begin{align*}
        \sum_{\ell_i > r - 1 - \frac{n}{2}} \frac{n-2(\tilde{r}-1-\ell_i)}{2n} = \frac{n-2(r-1-(r-1))}{2n} = \frac{1}{2} = \frac{1}{q_r}.
    \end{align*}
    \emph{Case 3.} Finally suppose that \(r > \frac{n}{2} \text{ and } j \geq 2\). Let \(J := |\{\ell_i \mid \ell_i >  r - 1 - \frac{n}{2}\}|\). It follows from (\ref{Hkestimates:indexsum}) that either \(J \geq 2\), or \(J = 1\) and \(\ell_1 \leq r-2\). In the first case
    \begin{align*}
         \sum_{\ell_i  > r - 1 - \frac{n}{2}} \frac{n-2(\tilde{r}-1-\ell_i)}{2n} &= J\frac{n-2r}{2n} + \sum_{\ell_i  > r - 1 - \frac{n}{2}} \frac{\ell_i+1}{n}  \\
         &\leq J\frac{n-2r}{2n} + \frac{r}{n} \\
         &= \frac{1}{2} + (J-1)\frac{n-2r}{2n} \\
         &< \frac{1}{2} = \frac{1}{q_r},
    \end{align*} and in the second case
    \begin{align*}
        \frac{n-2(r-1-\ell_1)}{2n} \leq \frac{n-2(r-1-(r-2))}{2n} < \frac{1}{2} = \frac{1}{q_r}.
    \end{align*}
    \end{claimproof}
    We can now prove (\ref{Hkestimates:lowinductionestimate}) and (\ref{Hkestimates:highinductionestimate}). From H{\"o}lder's inequality, if \(1 \leq p_1, \cdots, p_s, q \leq \infty\) are such that \(\sum_{i = 1}^s \frac{1}{p_i} \leq \frac{1}{q}\), then there exists a constant \(C\), depending on \(\text{Vol}(\Omega_1), p_1, \cdots, p_s, q\), such that for functions \(f_1, \cdots, f_s\),
    \begin{align}\label{Hkestimates:Holdersinequality}
        \left\|\prod_{i = 1}^s f_i\right\|_q \leq C\prod_{i = 1}^s\|f_i\|_{p_i}.
    \end{align} First note that if \(\ell_i > r - 1 - \frac{n}{2}\), then
    \begin{align}\label{Hkestimates:validLebesgueexponent}
        0 < \frac{n-2(\tilde{r}-1 - \ell_i)}{2n} \leq 1.
    \end{align} The strict lower bound of \(0\) follows for \(r > \frac{n}{2}\), whence \(\tilde{r} = r\), by the assumption on \(\ell_i\) since \(\frac{n-2(\tilde{r}-1 - \ell_i)}{2n} = \frac{n-2(r-1 - \ell_i)}{2n}\), and follows for \(r \leq \frac{n}{2}\), whence \(\tilde{r} = \frac{n}{2}\), since \(1 + \ell_i > 0\). The upper bound of \(1\) follows from \(\tilde{r} - 1 - \ell_i \geq 0\) by (\ref{Hkestimates:indexsum}). \par
    Now, if \(r \leq \frac{n}{2}\), then \(\ell_i > r - 1 - \frac{n}{2}\) for all \(i\), and moreover \(\ell_i \leq \frac{n}{2}-1\) by (\ref{Hkestimates:indexsum}). Therefore, (\ref{Hkestimates:sumclaim}), (\ref{Hkestimates:Holdersinequality}), and (\ref{Hkestimates:validLebesgueexponent}) yield
    \begin{align*}
        \|\nabla_{\text{ref}}^{(\ell_1)} A \# \cdots \# \nabla_{\text{ref}}^{(\ell_j)} A\|_{q_r} &\leq  C_{r, \Omega_1} \prod_{i=1}^j \|\nabla_{\text{ref}}^{(\ell_i)} A\|_{\frac{2n}{n-2(\tilde{r}-1-\ell_i)}} = C_{r,  \Omega_1} \prod_{i=1}^j \|\nabla_{\text{ref}}^{(\ell_i)} A\|_{\frac{n}{\ell_i+1}}.
    \end{align*} The bound (\ref{Hkestimates:lowinductionestimate}) now follows the definition of the \(X_r\) norm and the fact that \(j \leq r\) by (\ref{Hkestimates:indexsum}). If \(r > \frac{n}{2}\) and \(j = 1\), then (\ref{Hkestimates:indexsum}) implies that the term is \(\nabla_{\text{ref}}^{(r-1)} A\), and clearly
    \begin{align}\label{Hkestimates:onlyoneterm}
        \|\nabla_{\text{ref}}^{(r-1)} A\|_2 \leq \|A\|_{H^{r-1}}.
    \end{align} On the other hand, suppose \(r > \frac{n}{2}\) and \(j \geq 2\). Let
    \begin{align*}
        G_r := \max\left\{\ell_1, \cdots, \ell_j \text{ satisfying } (\ref{Hkestimates:indexsum}) \mid \sum_{\ell_i > r - 1 -\frac{n}{2}} \frac{n-2(r-1-\ell_i)}{2n}\right\}.
    \end{align*} Note that \(\frac{1}{2} - G_r > 0\) since \(G_r\) is the maximum of a finite set of numbers which are strictly less than \(\frac{1}{2}\) by (\ref{Hkestimates:sumclaim}). Moreover, \(G_r \geq 0\) by the left inequality in (\ref{Hkestimates:validLebesgueexponent}). Let
    \begin{align*}
        \frac{1}{p_r} := \frac{1}{r}\left(\frac{1}{2} - G_r\right). 
    \end{align*} Note that \(p_r\) depends only on \(n\) and \(r\), and moreover \(1 \leq p_r < \infty\) since \(0 \leq G_r < \frac{1}{2}\). Thus by (\ref{Hkestimates:indexsum}),
    \begin{align*}
        \sum_{\ell_i \leq r - 1 - \frac{n}{2}} \frac{1}{p_r} + \sum_{\ell_i > r-1-\frac{n}{2}} \frac{n-2(r-1-\ell_i)}{2n} &\leq \frac{r}{p_r} + \sum_{\ell_i > r-1-\frac{n}{2}} \frac{n-2(r-1-\ell_i)}{2n}  \\
        &\leq \frac{1}{2}.
    \end{align*}Consequently, 
    \begin{align*}
        \|\nabla_{\text{ref}}^{(\ell_1)} A \# \cdots \# \nabla_{\text{ref}}^{(\ell_j)} A\|_{q_r} &\leq  C_{r, \Omega_1} \prod_{\ell_i \leq r-1-\frac{n}{2}} \|\nabla_{\text{ref}}^{(\ell_i)} A\|_{p_r} \prod_{\ell_i >r-1-\frac{n}{2}} \|\nabla_{\text{ref}}^{(\ell_i)} A\|_{\frac{2n}{n-2(r-1-\ell_i)}}.
    \end{align*} 
     Since the reference connection is metric-compatible and since \(\Omega_1\) has \(C^1\) boundary, it follows from Kato's inequality and the Sobolev inequality on functions that we have the following bounds for \(B \in 
     \Gamma((\otimes^l T^\ast M) \otimes \mathfrak{g}_E), l \geq 0\):
    \begin{align}\label{Hkestimates:sobolevinequalities}
        \|\nabla_{\text{ref}}^{(\ell)} B\|_{\frac{2n}{n-2(r-1-\ell)}} &\leq C_{\Omega_1} \|B\|_{H^{r-1}}, \quad \ell > r - 1 - \frac{n}{2} \\
        \|\nabla_{\text{ref}}^{\ell} B\|_{p} &\leq C_{p, \Omega_1} \|B\|_{H^{\ell + \lceil\frac{n}{2}\rceil}}, \quad 1 \leq p < \infty. \nonumber
    \end{align} By integrality of \(\ell_i\) and \(r\), \(\ell_i \leq r-1 - \frac{n}{2}\) implies \(\ell_i \leq r-1 - \lceil\frac{n}{2}\rceil\). Hence
    \begin{align*}
         \prod_{\ell_i \leq r-1-\frac{n}{2}} \|\nabla_{\text{ref}}^{(\ell_i)} A\|_{p_r} \prod_{\ell_i >r-1-\frac{n}{2}} \|\nabla_{\text{ref}}^{(\ell_i)} A\|_{\frac{2n}{n-2(r-1-\ell_i)}} \leq C_{r,  \Omega_1}\left(1 + \|A\|_{H^{r-1}}^r\right).
    \end{align*} Taking into account (\ref{Hkestimates:onlyoneterm}), the bound (\ref{Hkestimates:highinductionestimate}) now follows. \par
    We finally prove the desired estimate (\ref{Hkestimates:Hkbounds}), first with \(t_i = t_1\). Any dependence on \(\Omega_1\) is a dependence on \(\Omega\). Then for \(t \in [t_1, t_2]\)
    \begin{align*}
        \sup_{t_1 \leq t \leq t_2} \|A(t)\|_{q_0} &\leq \|A(t_1)\|_{q_0} + \sup_{t_1 \leq t \leq t_2} \|A(t) - A(t_1)\|_{q_0} \\
        &\leq \|A(t_1)\|_{X_0} + \sup_{t_1 \leq t \leq t_2} \|A(t) - A(t_1)\|_{q_0} \\ 
        &\leq \|A(t_1)\|_{X_0} + C_{\tau, K, \Omega} \rho. 
    \end{align*} Suppose that for some \(r\), with \(0 \leq r \leq \frac{n}{2}-2\), we have the estimate
    \begin{align*}
        \sup_{t_1 \leq t \leq t_2} \|A(t)\|_{X_{r}} \leq C_{r, \tau, K, \Omega}\left(1+\rho^{(r+1)!} \right) \left(1+\|A(t_1)\|_{X_{r}}^{r!} \right).
    \end{align*} Then by (\ref{Hkestimates:lowinductionestimate})
    \begin{align*}
        \sup_{t_1 \leq t \leq t_2} \|\nabla_{\text{ref}}^{(r+1)} A(t)\|_{q_{r+1}} &\leq \|\nabla_{\text{ref}}^{(r+1)} A(t_1)\|_{q_{r+1}} + \sup_{t_1 \leq t \leq t_2} \|\nabla_{\text{ref}}^{(r+1)}(A(t) - A(t_1))\|_{q_{r+1}} \\
    &\leq \|A(t_1)\|_{X_{r+1}} + \sup_{t_1 \leq t \leq t_2} \|\nabla_{\text{ref}}^{(r+1)}(A(t) - A(t_1))\|_{q_{r+1}} \\
    &\leq \|A(t_1)\|_{X_{r+1}} + C_{r+1, \tau, K, \Omega} (1 + \sup_{t_1 \leq t \leq t_2} \|A(t)\|_{X_{r}}^{r+1})\rho  \\
    &\leq \|A(t_1)\|_{X_{r+1}} + C_{r+1, \tau, K, \Omega} (1 + ((1+\rho^{(r+1)!})(1+\|A(t_1)\|_{X_{r}}^{r!}))^{r+1})\rho  \\
    &\leq C_{r+1, \tau, K, \Omega}\left(1+\rho^{(r+2)!} \right)\left(1+\|A(t_1)\|_{X_{r+1}}^{(r+1)!} \right).
    \end{align*} Thus by induction
    \begin{align*}
        \sup_{t_1 \leq t \leq t_2} \|A(t)\|_{X_{\lfloor\frac{n}{2}\rfloor-1}} \leq C_{\tau, K, \Omega} \left(1 + \rho^{\lfloor\frac{n}{2}\rfloor!} \right) \left(1 + \|A(t_1)\|_{X_{\lfloor\frac{n}{2}\rfloor-1}}^{(\lfloor\frac{n}{2}\rfloor-1)!} \right).
    \end{align*} Observe that \(q_r \geq 2\) for all \(r\). Hence
    \begin{align*}
        \|B\|_{H^{\lfloor\frac{n}{2}\rfloor-1}} \leq C_\Omega \|B\|_{X_{\lfloor\frac{n}{2}\rfloor-1}}.
    \end{align*} On the other hand, the Sobolev inequalities (\ref{Hkestimates:sobolevinequalities}) imply
    \begin{align*}
        \|B\|_{X_{\lfloor\frac{n}{2}\rfloor-1}} \leq C_\Omega \|B\|_{H^{\lceil\frac{n}{2}\rceil-1}}.
    \end{align*} Hence
    \begin{align*}
        \sup_{t_1 \leq t \leq t_2} \|A(t)\|_{H^{\lfloor\frac{n}{2}\rfloor-1}} \leq C_{\tau, K, \Omega} \left(1+\rho^{\lfloor\frac{n}{2}
    \rfloor!} \right) \left(1 + \|A(t_1)\|_{H^{\lceil\frac{n}{2}\rceil-1}}^{(\lfloor\frac{n}{2}\rfloor-1)!} \right).
    \end{align*} The bound (\ref{Hkestimates:Hkbounds}) now follows by a similar induction as just carried out, using (\ref{Hkestimates:highinductionestimate}) in place of (\ref{Hkestimates:lowinductionestimate}), combined with the obvious bounds \(\|B\|_{W^{\ell, s}(\Omega)} \leq \|B\|_{W^{\ell, s}(\Omega_1)} \leq \|B\|_{W^{\ell, s}(M)}\). The case \(i = 2\) follows by an essentially identical argument, starting from
    \begin{align*}
        \sup_{t_1 \leq t \leq t_2} \|A(t)\|_{q_0} \leq \|A(t_2)\|_{q_0} + \sup_{t_1 \leq t \leq t_2} \|A(t_2) - A(t)\|_{q_0}.
    \end{align*}
\end{proof}

\vspace{5mm}

\subsection{Proof of item (\ref{wellposedness:gaugeequivalenttosmooth}) of Theorem \ref{thm:wellposedness}} \label{gaugeeqtosmth}
    We will show the existence of a gauge transformation as claimed in item (\ref{wellposedness:gaugeequivalenttosmooth}) via a compactness argument. \par
   Since \(C^\infty(\mathcal{A}_E)\) is dense in \(H^q(\mathcal{A}_E)\), there exists a sequence of smooth connections \(A_i\) converging to \(A_0\) in \(H^q\). By item (\ref{wellposedness:wellposedness}) of Theorem \ref{thm:wellposedness}, we may assume that the solution \(\tilde{A}_i(t)\) of (\ref{ymf}) with \(\tilde{A}_i(0) = A_i\) exists on \([0, \tau]\), and moreover that the \(\tilde{A}_i(t)\) converge in \(C^0([0, \tau], H^q)\) to \(A(t)\). In particular,
    \begin{align*}
        \sup_{i, t \in [0, \tau]} \|\tilde{A}_i(t)\|_{H^q} < \infty,
    \end{align*} and hence
    \begin{align*}
        \sup_{i, t \in [0, \tau]} \|F_i(t)\|_{H^{q-1}} < \infty.
    \end{align*}Since \(q > \frac{n}{2}-1\), \(H^{q-1} \inj L^p\) for some \(p \in (\frac{n}{2}, \infty]\). Given a uniform \(L^p\) bound on the curvature for such \(p\), $\varepsilon$-regularity, e.g. \cite[Lemma 2.2]{waldronuhlenbeck}, yields that for each \(\delta > 0\)
    \begin{align}\label{gaugeequivalenttosmoothprop:curvaturebound}
        \sup_{i, t \in [\delta, \tau]} \|F_i(t)\|_\infty < \infty.
    \end{align} Moreover, since \(\tau \leq 1\), the global energy identity and the convergence of \(A_i \to A_0\) yields
    \begin{align}\label{gaugeequivalenttosmoothprop:tensionbound}
        \sup_i \int_0^{\tau} \|D_i^\ast F_i(t) \| \, dt < \infty.
    \end{align}Then by standard strong Uhlenbeck compactness arguments for Yang-Mills flow, e.g. \cite[Theorem 1.3]{waldronuhlenbeck}, we can pass to a subsequence, also labeled by \(i\), such that there exist smooth gauge transformations \(u_i\) for which \(u_i(\tilde{A}_i(\tau))\) converge in \(C^\infty\) to some smooth connection. \par 
    Let \(t_m := \frac{\tau}{m}\). Note that the bounds (\ref{gaugeequivalenttosmoothprop:curvaturebound}) and (\ref{gaugeequivalenttosmoothprop:tensionbound}) are gauge-invariant, and so they also hold for the \(u_i(\tilde{A}_i(t))\). Since \(u_i(\tilde{A}_i(\tau))\) is smoothly convergent, it follows from Theorem \ref{thm:Hkestimates}, with \(t_2 = \tau\) and \(t_i = t_2\) in the statement, that for each \(m\) and for each \(k \geq \lceil\frac{n}{2}\rceil\)
    \begin{align}\label{gaugeequivalenttosmoothprop:Hkbounds}
        \sup_{i, t_m \leq t \leq \tau} \|u_i(\tilde{A}_i(t))\|_{H^k} &\leq \sup_{i, t_m \leq t \leq \tau} \left(\|u_i(\tilde{A}_i(\tau))\|_{H^k} + \|u_i(\tilde{A}_i(t)) - u_i(\tilde{A}_i(\tau))\|_{H^k} \right) < \infty.
    \end{align} Now, we may exchange time derivatives of \(u_i(\tilde{A}_i)\) with spatial derivatives using the flow equation. Hence, (\ref{gaugeequivalenttosmoothprop:Hkbounds}) and Sobolev embedding yields that for each \(m\) and each \(k\)
    \begin{align*}
        \sup_i \|u_i(\tilde{A}_i)\|_{C^k(M \times [t_m, \tau])} < \infty.
    \end{align*}
    Thus for each \(m\), Arzela-Ascoli implies that we may pass to a subsequence, also labeled by \(i\), such that \(u_i(\tilde{A}_i)\) converges in \(C^\infty(M \times [t_m, \tau])\) to a family of connections \(B_m(t)\), which is also a solution of (\ref{ymf}). Note that \(B_m(t) = B_n(t)\) on their common interval of definition by uniqueness of solutions to (\ref{ymf}). Passing to a subsequence, \(u_i(\tilde{A}_i)\) converge in \(C_{\text{loc}}^\infty(M \times (0, \tau])\) to a smooth solution \(B(t)\) of Yang-Mills flow. \par
    Finally, since the \(\tilde{A}_i(\tau)\) themselves converge in \(H^q\), we have as in \cite[\textsection 2.3.17]{donkron} that the \(u_i\) converge to some gauge transformation \(u_\infty\) in \(H^{q+1}\). Then since
    \begin{align*}
        u_i(\tilde{A}_i(t)) - u_\infty(A(t)) = u_i(\tilde{A}_i(t) - A(t)) + (u_i - u_\infty)(A(t)),
    \end{align*} we deduce that \(B(t) = u_\infty(A(t))\) for \(t \in (0, \tau]\). By the uniqueness of solutions to (\ref{ymf}), the same holds for all \(t\) for which \(A(t)\) is defined, and \(B(t)\) remains smooth. Thus \(u_\infty\) satisfies the conclusions of item (\ref{wellposedness:gaugeequivalenttosmooth}) of Theorem \ref{thm:wellposedness}, as desired. \qed

\vspace{5mm}

\subsection{Proof of item (\ref{wellposedness:blowupcharacterization}) of Theorem \ref{thm:wellposedness}} \label{ltecriterion}
    Suppose for 
    contradiction that
    \begin{align*}
        \limsup_{t \nearrow T} \|F(t)\|_\infty < \infty.
    \end{align*} By item (\ref{wellposedness:gaugeequivalenttosmooth}) of Theorem \ref{thm:wellposedness}, we may gauge transform \(A(t)\) so that \(u(A(t))\) is smooth. The pointwise norm of the curvature is gauge-invariant, so we still have \(\limsup_{t \nearrow T}\ \|F_{u(A(t))}\|_\infty < \infty\). It then follows from Theorem \ref{thm:Hkestimates} and finiteness of \(T\) that for each \(k \geq 0\), the \(H^k\) norm of \(u(A(t))\) remains bounded as \(t \nearrow T\). Thus the \(H^q\) norm of \(A(t)\) remains bounded as \(t \nearrow T\), so by Theorem \ref{thm:shorttimeexistence}, \(A(t)\) exists on \([0, T + \varepsilon]\) for some \(\varepsilon > 0\). This contradicts the maximality of \(T\), as desired. \qed


\vspace{10mm}

\section{Continuity at infinite time}

Concerning infinite-time behavior, we will prove the following standard result, cf. \cite[Prop. 7.4]{rade}. See also Kozono-Maeda-Naito \cite{kozonomaedaasymptoticstability, kozonomaedanaitostablemanifold} for early work on asymptotic stability of Yang-Mills flow.
\begin{thm}\label{thm:continuityatinfinitetime}
    Let \(q\) be such that \(q \geq 1\) and \(q > \lceil\frac{n}{2}-1\rceil\). 
    Suppose that \(A_0 \in H^q(\mathcal{A}_E)\) is such that the solution $A(t)$ of (\ref{ymf}) starting at \(A_0\) exists for all time and converges in \(H^q\) to a Yang-Mills connection \(A_\infty =: A(\infty)\) as \(t \to \infty\). There exists \(\delta_\infty > 0 \) (depending on \(A_\infty, q,\) and \(M\)) such that given any $\eps > 0,$ there exists \(\delta > 0\) (depending on \(A_0, q,\) and \(M\)) as follows.
    
    Let \(A'_0 \in B_{\delta}(A_0)\) and consider the solution $A'(t)$ of (\ref{ymf}) with $A'(0) = A_0'.$ Then exactly one of the following alternatives holds:
    \begin{enumerate}
        \item There exists \(0 < \tau < \infty\) such that $A'(t)$ is defined on $\LB 0 , \tau \RB$ and \(\mathcal{YM}(A'(\tau)) \leq \mathcal{YM}(A_\infty) - \delta_\infty\), or
        \item The solution \(A'(t)\) exists for all time and converges in \(H^q\) to a Yang-Mills connection $A'(\infty)$ as \(t \to \infty\), with \begin{align}\label{continuityatinfinitetime:continuityestimate}
            \| A'(t) - A(t) \|_{H^q} < \eps
        \end{align}
        for all $0 \leq t \leq \infty.$ 
    \end{enumerate}
\end{thm}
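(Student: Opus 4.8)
The plan is to run the standard \emph{continuity (``no-escape'')} argument for a gradient flow possessing a {\L}ojasiewicz--Simon inequality: couple Lemma \ref{lemma:lojasiewicz} with the Sobolev-distance estimate of Theorem \ref{thm:Hkestimates} and the well-posedness/blow-up criteria of Theorem \ref{thm:wellposedness}. Throughout, norms are $H^q$ unless stated otherwise, and one passes freely between the $H^q$ solution and its smooth gauge representative from item (\ref{wellposedness:gaugeequivalenttosmooth}) of Theorem \ref{thm:wellposedness}, using that $\YM$, the pointwise curvature norm, and the {\L}ojasiewicz inequality are gauge invariant, while (\ref{continuityatinfinitetime:continuityestimate}) is imposed on the actual solution. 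First I would fix the data of Lemma \ref{lemma:lojasiewicz}: an $H^q$-neighborhood $\mathcal U_0\ni A_\infty$ and constants $c>0$, $\theta\in(0,\tfrac12]$ with $\|D_A^\ast F_A\|_{H^{-1}}\ge c\,|\YM(A)-\YM(A_\infty)|^{1-\theta}$ on $\mathcal U_0$, choose $R_0>0$ with $\overline{B_{R_0}(A_\infty)}\subset\mathcal U_0$ (on this ball $A\mapsto F_A$ is bounded into $H^{q-1}\hookrightarrow L^p$ for some $p>n/2$, so parabolic $\varepsilon$-regularity gives an $L^\infty$ curvature bound $K=K(A_\infty,q,M)$ along any solution confined to the ball and evaluated away from the entry time, and $\|A\|_{H^{q-1}}\le C_q$ there), and record that $\YM(A(t))\searrow\YM(A_\infty)$ with $\sup_{t\ge1}\|F_{A(t)}\|_\infty<\infty$ since $A(t)\to A_\infty$ in $H^q$ and $\YM$ is $H^q$-continuous. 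I then fix $\delta_\infty>0$ small enough for the estimates below (it depends only on $c,\theta,R_0,K,C_q$ and the geometry, hence only on $A_\infty,q,M$); given $\varepsilon>0$, WLOG $\varepsilon<R_0$, I introduce an auxiliary small $\eta_0>0$ and pick $T_0$ so large that $\|A(t)-A_\infty\|<\varepsilon/8$ for $t\ge T_0$ and $\YM(A(T_0))-\YM(A_\infty)<\eta_0$.

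Next, by item (\ref{wellposedness:wellposedness}) of Theorem \ref{thm:wellposedness} together with continuity of $\YM$, choose $\delta>0$ (depending on $A_0$, $q$, $M$, $\varepsilon$) so that for $A_0'\in B_\delta(A_0)$ the solution $A'(t)$ exists on $[0,T_0+1]$ with $\sup_{[0,T_0+1]}\|A'(t)-A(t)\|<\varepsilon/8$, $|\YM(A'(T_0+1))-\YM(A_\infty)|<2\eta_0$, and $\sup_{[1,T_0+1]}\|F_{A'(t)}\|_\infty$ bounded; in particular $\|A'(T_0+1)-A_\infty\|<\varepsilon/4$. Now the dichotomy: if $\YM(A'(t))\le\YM(A_\infty)-\delta_\infty$ at some $t=\tau$ in the existence interval then $\tau<\infty$ and alternative (1) holds; otherwise $\YM(A'(t))>\YM(A_\infty)-\delta_\infty$ throughout, and I claim (2). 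Let $\tau_1\in[T_0+1,T')$ be the first exit time of $A'(t)$ from $B_{R_0}(A_\infty)$ (or $T'$ if none). On $[T_0+1,\tau_1)$ the flow lies in $\overline{B_{R_0}(A_\infty)}$, hence $\sup_{[T_0+\frac12,\tau_1)}\|F_{A'(t)}\|_\infty\le\widetilde K$ for some $\widetilde K=\widetilde K(A_\infty,q,M,A_0)$. Since $E(t):=\YM(A'(t))-\YM(A_\infty)$ is nonincreasing it changes sign at most once; using $\|\cdot\|_{H^{-1}}\le\|\cdot\|_{L^2}$, the {\L}ojasiewicz inequality, and the energy identity $\tfrac{d}{dt}E=-\|D_{A'}^\ast F_{A'}\|_{L^2}^2$, the usual computation $\pm\tfrac{d}{dt}|E|^{\theta}\ge\theta c\,\|D_{A'}^\ast F_{A'}\|_{L^2}$ on each sign interval — with $|E|<\delta_\infty$ on the interval where $E<0$, by the standing assumption — together with Cauchy--Schwarz on $[T_0+\tfrac12,T_0+1]$ gives
\[
\rho:=\int_{T_0+\frac12}^{\tau_1}\|D_{A'}^\ast F_{A'}(t)\|_{L^2}\,dt\ \le\ C\sqrt{\eta_0}+\tfrac{C}{\theta c}\bigl((2\eta_0)^{\theta}+\delta_\infty^{\,\theta}\bigr),
\]
which is as small as desired after shrinking $\delta_\infty$ and then $\eta_0$. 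Feeding $\rho$, the curvature bound $\widetilde K$ on $[T_0+\tfrac12,\tau_1)$, and $C_q$ into Theorem \ref{thm:Hkestimates} (with $\Omega=M$, $k+1=q$) bounds $\|A'(t)-A'(T_0+1)\|$ by a polynomial in $\rho$ and $C_q$ times $\rho$, hence by $\varepsilon/4$ for $t\in[T_0+1,\tau_1)$; so $\|A'(t)-A_\infty\|<\varepsilon/2<R_0$ there, which by continuity forces $\tau_1=T'$. Since the curvature is then bounded by $\widetilde K$ on $[T_0+1,T')$, item (\ref{wellposedness:blowupcharacterization}) of Theorem \ref{thm:wellposedness} gives $T'=\infty$.

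Now $A'(t)$ is global and confined to $\overline{B_{\varepsilon/2}(A_\infty)}\subset\mathcal U_0$, so the same {\L}ojasiewicz estimate yields $\int_1^\infty\|D_{A'}^\ast F_{A'}\|_{L^2}\,dt<\infty$; applying Theorem \ref{thm:Hkestimates} on intervals $[s-\tfrac12,t]$ with $s\to\infty$ shows $A'(t)$ is Cauchy in $H^q$, so it converges to $A'(\infty)\in\overline{B_{\varepsilon/2}(A_\infty)}$, which is Yang-Mills since $\|D_{A'}^\ast F_{A'}(t_j)\|_{L^2}\to0$ along a subsequence and $A\mapsto D_A^\ast F_A$ is continuous from $H^q$ into $L^2$ (here $q\ge2$). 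Estimate (\ref{continuityatinfinitetime:continuityestimate}) then follows by combining closeness on $[0,T_0+1]$ with $\|A'(t)-A_\infty\|<\varepsilon/2$ and $\|A(t)-A_\infty\|<\varepsilon/8$ for $t\in[T_0+1,\infty]$. For $\varepsilon$ below a threshold depending only on $A_\infty$ the alternatives are mutually exclusive, since in case (2) one has $|\YM(A'(t))-\YM(A_\infty)|<\delta_\infty$ for all $t$; this suffices for the applications, a smaller $\varepsilon$ being a stronger statement.

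The step I expect to be the main obstacle is the one in the previous paragraph's predecessor: converting the $L^1$-in-time control of $\|D_{A'}^\ast F_{A'}\|_{L^2}$ coming from the $H^{-1}$ {\L}ojasiewicz inequality into a genuine $H^q$-displacement bound via Theorem \ref{thm:Hkestimates}, which requires an a priori $L^\infty$ curvature bound that is itself available only while $A'(t)$ stays in the fixed ball $\overline{B_{R_0}(A_\infty)}$ — so the curvature bound, the flow-length bound, and the no-escape statement must all be closed simultaneously by the continuity argument, and the bootstrap must be arranged (as above) so that the smallness of $\rho$ is genuinely available on the whole interval where Theorem \ref{thm:Hkestimates} is invoked. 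A secondary technical nuisance is keeping the argument consistent between the $H^q$ solution, on which (\ref{continuityatinfinitetime:continuityestimate}) and Theorem \ref{thm:Hkestimates} are applied, and its smooth gauge representative, on which the $\varepsilon$-regularity and parabolic-maximum-principle arguments are cleanest.
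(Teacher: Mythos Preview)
Your proposal is correct and follows essentially the same approach as the paper: reduce via well-posedness (item (\ref{wellposedness:wellposedness}) of Theorem \ref{thm:wellposedness}) to the situation where the nearby solution enters a {\L}ojasiewicz neighborhood of $A_\infty$, bound the flow length $\int\|D^\ast F\|\,dt$ by splitting according to the sign of $\YM(A'(t))-\YM(A_\infty)$ and applying Lemma \ref{lemma:lojasiewicz}, then close the no-escape argument with the Sobolev-distance estimate of Theorem \ref{thm:Hkestimates} using the $L^\infty$ curvature bound available while confined to the ball. The paper's version is more compressed (it reduces explicitly to the static case $A(t)\equiv A_\infty$ and omits the standard conclusion), whereas you carry out the continuity bootstrap and the convergence step in detail; but the ingredients and their logical arrangement are the same.
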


In \textsection \ref{lojasiewiczineq}, we first describe the setup of an $H^{-1}$ \L ojasiewicz inequality in general dimension, which we then prove in Lemma \ref{lemma:lojasiewicz}. We also specialize to the case of flat connections in Lemma \ref{lemma:uniformlojasiewicz}. Theorem \ref{thm:continuityatinfinitetime} is proven in \textsection \ref{continuityatinftime} using Lemma \ref{lemma:lojasiewicz}.

\vspace{5mm}

\subsection{\L ojasiewicz inequalities} \label{lojasiewiczineq} We will use a variant of the \L ojasiewicz-Simon inequality proved by R\aa de in dimension less than four and by B. Yang \cite[Lemma 12]{byanguniqueness} in general dimension. First we describe our setup. In the setup, \(C\) denotes a constant, depending only on \(M\), which may increase from line to line. \par
Given an \(H^q\) connection \(A\), with \(q > \frac{n}{2}-1\) and \(q \geq 1\), we define the following quantities, cf. \cite[(2.10) in Def. 2.3]{taubespathconnected}. For \(b \in H^1\left(T^\ast M \otimes \mathfrak{g}_E\right)\), set
\begin{align*}
    \|b\|_{H_A^1} &:= \sqrt{\left\|\nabla_A b\right\|^2 + \|b\|^2} \\
    \mathcal{V}(A) &:= \sup_{\substack{b \in H^1\left(T^\ast M \otimes \mathfrak{g}_E\right), \\ b \neq 0}} \frac{\left|\int \langle F_A, D_A b\rangle\right|}{\|b\|_{H_A^1}}.
\end{align*} Note that \(\|b\|_{H_A^1}\) and the expression in the above supremum are finite since, by Sobolev multiplication, \(D_A\) and \(\nabla_A\) map \(H^1\) to \(L^2\). Moreover, the supremum itself is finite by Cauchy-Schwarz and the pointwise domination \(|D_A b| \lesssim |\nabla_A b|\). \par
Moreover, we claim that \(\mathcal{V}\) is continuous with respect to the \(H^q\) topology.
\begin{claimproof}
    Let \(A_1, A_2 \in H^q(\mathcal{A}_E)\). We first establish equivalence of \(\|\cdot\|_{H_{A_1}^1}\) and \(\| \cdot \|_{H_{A_2}^1}\) if \(A_1, A_2\) are close in \(H^q\). Note that
\begin{align*}
   | \|\nabla_{A_1} b\|^2 - \|\nabla_{A_2} b\|^2| &\leq (\|\nabla_{A_1} b\| + \|\nabla_{A_2} b\|)\|(A_1-A_2) \# b\| \\
        &\leq C\left(\|b\|_{H_{A_1}^1} + \|b\|_{H_{A_2}^1}\right)\|A_1-A_2\|_n \|b\|_{\frac{2n}{n-2}}.
\end{align*} By Sobolev embedding and the Kato inequality trick, see e.g. \cite[(1.16)]{instantons},
\begin{align*}
    \|b\|_{\frac{2n}{n-2}} \leq C\min\left\{\|b\|_{H_{A_1}^1}, \|b\|_{H_{A_2}^1}\right\}.
\end{align*} Hence for \(\|A_1 - A_2\|_{H^q}\) sufficiently small such that, by Sobolev embedding, \(C^2\|A_1 - A_2\|_n \leq \varepsilon < 1\),
\begin{align}\label{lojasiewiczclaim:normequivalence}
    \frac{1-\varepsilon}{1+\varepsilon}\|b\|_{H_{A_2}^1}^2 \leq \|b\|_{H_{A_1}^1}^2 \leq \frac{1+\varepsilon}{1-\varepsilon}\|b\|_{H_{A_2}^1}^2.
\end{align} Thus we have equivalence of the two norms. \par
 We next show \begin{align}\label{lojasiewiczclaim:D*Fequivalence}
    \left|\int \langle F_{A_1}, D_{A_1} b\rangle - \int \langle F_{A_2}, D_{A_2} b\rangle\right| \leq \varepsilon\min\left\{\|b\|_{H_{A_1}^1}, \|b\|_{H_{A_2}^1}\right\}.
\end{align} We compute
\begin{align*}
    \int \langle F_{A_1}, D_{A_1} b\rangle - \int \langle F_{A_2}, D_{A_2} b\rangle & = \text{I} + \text{II} + \text{III} + \text{IV} + \text{V},
\end{align*}where  
\begin{align*}
    \text{I} &= \int \langle F_{A_2}, (A_1 - A_2)\wedge b\rangle \\
    \text{II} &= \int \langle D_{A_2}(A_1 - A_2), D_{A_2} b\rangle \\
    \text{III} &= \int \langle D_{A_2}(A_1-A_2), (A_1-A_2) \wedge b\rangle \\
    \text{IV} &= \int \langle (A_1 - A_2) \wedge (A_1-A_2), D_{A_2} b\rangle \\
    \text{V} &= \int \langle (A_1 - A_2) \wedge (A_1-A_2), (A_1 - A_2) \wedge b\rangle.
\end{align*} We estimate
\begin{align*}
    |\text{I}| &\leq C\|F_{A_2}\|\|A_1 - A_2\|_n\|b\|_{\frac{2n}{n-2}} \\
    |\text{II}| &\leq C(\|\nabla_{\text{ref}}(A_1 - A_2)\| + \|A_2\|_4\|A_1 - A_2\|_4)\|b\|_{H_{A_2}^1} \\
    |\text{III}| &\leq C(\|\nabla_{\text{ref}}(A_1 - A_2)\| + \|A_2\|_4\|A_1 - A_2\|_4)\|A_1 -A_2\|_n\|b\|_{\frac{2n}{n-2}} \\
    |\text{IV}| &\leq C\|A_1 - A_2\|_4^2\|b\|_{H_{A_2}^1} \\
    |\text{V}| &\leq C\|A_1 - A_2\|_4^2\|A_1 - A_2\|_n\|b\|_{\frac{2n}{n-2}}.
\end{align*} Since \(q \geq 1\) and \(q > \frac{n}{2}-1\), we have 
\begin{align*}
    &\|F_{A_2}\| + \|A_2\|_4 \leq C_{q, M}\|A_2\|_{H^q} \\
    &\|\nabla_{\text{ref}}(A_1 - A_2)\| + \|A_1 - A_2\|_4 + \|A_1 - A_2\|_n \leq C_{q, M} \|A_1 - A_2\|_{H^q}.
\end{align*}Hence if \(\|A_1 - A_2\|_{H^q}\) is small enough,
\begin{align*}
    |\text{I}| + |\text{II}| + |\text{III}| + |\text{IV}| + |\text{V}| \leq \varepsilon\|b\|_{H_{A_2}^1}.
\end{align*} Thus by symmetry of the preceding computations
\begin{align*}
    \left|\int \langle F_{A_1}, D_{A_1} b\rangle - \int \langle F_{A_2}, D_{A_2} b\rangle\right| \leq \varepsilon\min\left\{\|b\|_{H_{A_1}^1}, \|b\|_{H_{A_2}^1}\right\},
\end{align*} which is the bound (\ref{lojasiewiczclaim:D*Fequivalence}) that we wanted to show. \par
Continuity of \(\mathcal{V}\) now readily follows from (\ref{lojasiewiczclaim:normequivalence}) and (\ref{lojasiewiczclaim:D*Fequivalence}), so the claim holds, as desired.
\end{claimproof}
\par We further have that \(\mathcal{V}\) is invariant under the action of \(H^{q+1}\) gauge transformations, which follows from
\begin{align*}
    \frac{\left|\int \langle F_{u(A)}, D_{u(A)} b\rangle\right|}{\left(\left\|\nabla_{u(A)} b\right\|^2 + \|b\|^2\right)^{\frac{1}{2}}} = \frac{\left|\int \langle u(F_A), u(D_A u^{-1}(b))\rangle\right|}{\left(\left\|u\left(\nabla_A u^{-1}(b)\right)\right\|^2 + \|b\|^2\right)^{\frac{1}{2}}} = \frac{\left|\int \langle F_A, D_A u^{-1}(b)\rangle\right|}{\left(\left\|\nabla_A u^{-1}(b)\right\|^2 + \|u^{-1}(b)\|^2\right)^{\frac{1}{2}}}.
\end{align*} Lastly, if we also have \(F_A \in H^q\), whence \(D_A^\ast F_A \in H^{q-1} \subset L^2\), then note that
\begin{align*}
    \mathcal{V}(A) \leq \left\|D_A^\ast F_A\right\|.
\end{align*}\par
We now give our variant of the R{\aa}de-Yang-Simon {\L}ojasiewicz inequality.
\begin{lemma}\label{lemma:lojasiewicz}
Let \(q > \frac{n}{2}-1\) and \(q \geq 1\). Given a fixed Yang-Mills connection $A_1 \in H^q(\calA_E),$ there exists a gauge-invariant $H^q$-neighborhood \(U\) of \(A_1\) and a constant \(\theta \in \left(0, \frac{1}{2}\right)\) 
such that for \(A \in U\),
\begin{align}\label{lojasiewicz:lojasiewiczinequality}
    \left|\mathcal{YM}(A) - \mathcal{YM}(A_1)\right|^{1 - \theta} \leq \mathcal{V}(A).
\end{align}
\end{lemma}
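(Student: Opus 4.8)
The plan is to deduce (\ref{lojasiewicz:lojasiewiczinequality}) from the general-dimension \L ojasiewicz--Simon inequality for Yang--Mills proved by B. Yang \cite[Lemma 12]{byanguniqueness} (the low-dimensional case being \cite[(9.1)]{rade}), together with three observations: that $\mathcal{YM}$ and $\mathcal{V}$ are gauge-invariant, that the weak norm $\mathcal{V}(A)$ dominates the $H^{-1}$ norm of $D_A^\ast F_A$, and that one is free to decrease the exponent $\theta$ at the cost of shrinking the neighborhood.

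First I would gauge-fix. Since $A_1$ is Yang--Mills, elliptic regularity (after an $H^{q+1}$ gauge transformation) lets us assume $A_1$ is smooth. By the Coulomb gauge theorem \cite[\S 2.3.3]{donkron}, there is an $H^q$-neighborhood of $A_1$ each element of which is $H^{q+1}$-gauge-equivalent to a connection lying on the affine slice $\mathcal{S} := \{A_1 + b : D_{A_1}^\ast b = 0\}$ with $\|b\|_{H^q}$ small; by gauge-invariance of both sides, it suffices to prove (\ref{lojasiewicz:lojasiewiczinequality}) for $A \in \mathcal{S}$ close to $A_1$. On $\mathcal{S}$ near $A_1$ one applies the abstract \L ojasiewicz--Simon machinery: the functional $b \mapsto \mathcal{YM}(A_1+b)$ is a quartic polynomial, hence real-analytic, on the Hilbert space $H^q \cap \ker D_{A_1}^\ast$; its critical point is $b=0$; and its Hessian there is the restriction of the Yang--Mills Jacobi operator $L_{A_1} = D_{A_1}^\ast D_{A_1} + \llbracket F_{A_1}, \,\cdot\, \rrbracket$ to $\ker D_{A_1}^\ast$, which is Fredholm as a map into the corresponding $H^{q-2}$ space because $\ker D_{A_1}^\ast$ is an elliptic complement and $L_{A_1} + D_{A_1} D_{A_1}^\ast$ is a Laplace-type operator plus a zeroth-order term. (Equivalently, one may run \L ojasiewicz--Simon on the gauge-augmented functional $\mathcal{YM}(\,\cdot\,) + \tfrac12 \| D_{A_1}^\ast(\,\cdot\, - A_1)\|^2$, whose $L^2$-gradient restricts to $D_A^\ast F_A$ on $\mathcal{S}$.) This yields $\theta_0 \in (0, \tfrac12]$, a constant $C_0$, and a slice neighborhood of $A_1$ on which
\[
|\mathcal{YM}(A) - \mathcal{YM}(A_1)|^{1 - \theta_0} \leq C_0 \, \| D_A^\ast F_A \|_{H^{-1}},
\]
the right-hand side being meaningful since $D_A^\ast F_A \in H^{q-2} \subseteq H^{-1}$ for $q \geq 1$.

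Next I would compare norms. For $b \in H^1(T^\ast M \otimes \mathfrak{g}_E)$, writing $\nabla_A b = \nabla_{\text{ref}} b + A \# b$ and using $H^q \hookrightarrow L^n$ (valid for $q > \tfrac{n}{2}-1$) together with $H^1 \hookrightarrow L^{2n/(n-2)}$, H\"older's inequality gives $\|A \# b\| \leq C \|A\|_{H^q} \|b\|_{H^1}$, hence $\|b\|_{H_A^1} \leq C(1 + \|A\|_{H^q}) \|b\|_{H^1}$. Therefore
\[
\mathcal{V}(A) = \sup_{b \neq 0} \frac{\left| \int \langle F_A, D_A b \rangle \right|}{\|b\|_{H_A^1}} \geq \frac{1}{C(1 + \|A\|_{H^q})} \sup_{b \neq 0} \frac{\left| \langle D_A^\ast F_A, b \rangle \right|}{\|b\|_{H^1}} = \frac{\|D_A^\ast F_A\|_{H^{-1}}}{C(1 + \|A\|_{H^q})},
\]
so on a slice neighborhood of $A_1$ (where $\|A\|_{H^q}$ is bounded) we obtain $|\mathcal{YM}(A) - \mathcal{YM}(A_1)|^{1-\theta_0} \leq C_1 \mathcal{V}(A)$. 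Finally set $\theta := \theta_0/2 \in (0, \tfrac12)$ and shrink to a neighborhood on which $|\mathcal{YM}(A) - \mathcal{YM}(A_1)| \leq \eta$ with $C_1 \eta^{\theta_0 - \theta} \leq 1$ — possible by continuity of $\mathcal{YM}$ on $H^q(\mathcal{A}_E)$ — so that
\[
|\mathcal{YM}(A) - \mathcal{YM}(A_1)|^{1-\theta} = |\mathcal{YM}(A) - \mathcal{YM}(A_1)|^{1-\theta_0} \, |\mathcal{YM}(A) - \mathcal{YM}(A_1)|^{\theta_0 - \theta} \leq C_1 \eta^{\theta_0 - \theta} \mathcal{V}(A) \leq \mathcal{V}(A).
\]
Taking $U$ to be the gauge-saturation of this neighborhood — gauge-invariant since $\mathcal{YM}$ and the smallness constraints used are invariant — and invoking once more the gauge-invariance of $\mathcal{YM}$ and of $\mathcal{V}$ established above, the inequality (\ref{lojasiewicz:lojasiewiczinequality}) holds on $U$.

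The main obstacle is the \L ojasiewicz--Simon step itself, and specifically obtaining the gradient bound in the \emph{weak} $H^{-1}$ norm (equivalently, in the dual of $H_A^1$) rather than merely in $L^2$: for $1 \leq q < 2$ the connection on the slice need not be regular enough for $D_A^\ast F_A$ to lie in $L^2$, so the classical Simon inequality does not apply verbatim and one must use B. Yang's refinement, carefully tracking that the gauge-degenerate directions of the Hessian are absorbed by restriction to the Coulomb slice (or, equivalently, by the augmented functional). The remaining ingredients — the Coulomb gauge-fixing, the elementary Sobolev/H\"older norm comparison, and the exponent adjustment — are routine.
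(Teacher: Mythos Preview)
Your proposal is correct and follows the same route as the paper: gauge-fix to a smooth $A_1$ and the Coulomb slice, obtain the $H^{-1}$-gradient \L ojasiewicz--Simon inequality (which the paper reproduces in full via an explicit Lyapunov--Schmidt reduction with smoothing estimates, rather than citing Yang as a black box), compare $\mathcal{V}(A)$ with $\|D_A^\ast F_A\|_{H^{-1}}$ through the equivalence of $\|\cdot\|_{H_A^1}$ and $\|\cdot\|_{H^1}$, and absorb the constant by lowering $\theta$. You correctly identify the weak-norm gradient bound as the non-routine ingredient.
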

\begin{proof}
This is a straightforward adaptation of the arguments of \cite[Prop. 7.2]{rade} and \cite[Lemma 12]{byanguniqueness}, based on \cite[Theorem 3]{leonsimon}, which we include for the sake of completeness.
In the sequel, \(C\) denotes a positive constant, whose dependencies include \(A_1, E, q\), and the geometry of \(M\), which may increase from line to line. \par
    
    We first reduce proving (\ref{lojasiewicz:lojasiewiczinequality}) to a simpler setting. Since the desired inequality (\ref{lojasiewicz:lojasiewiczinequality}) is gauge-invariant, and since \(H^q\) Yang-Mills connections are smooth modulo \(H^{q+1}\) gauge, we may WLOG assume that \(A_1\) is smooth. Furthermore, the standard Coulomb gauge-fixing argument, e.g. \cite[Prop. 2.3.4]{donkron}, implies that there is an \(H^q\) neighborhood \(U\) of \(A_1\) such that if \(A \in U\), then \(A\) is gauge-equivalent to a connection $B$  which satisfies \(D_{A_1}^\ast(B - A_1) = 0\). Again by gauge-invariance, it suffices to prove (\ref{lojasiewicz:lojasiewiczinequality}) for \(A\) satisfying the gauge-fixing condition. Finally, by continuity of \(\mathcal{V}\), there exists an \(H^q\)-neighborhood of \(A_1\) such that for \(A\) in the neighborhood
    \begin{align*}
        \|D_A^\ast F_A\|_{H^{-1}} \leq 2 \mathcal{V}(A),
    \end{align*} so it suffices to prove (\ref{lojasiewicz:lojasiewiczinequality}) with \(\|D_A^\ast F_A\|_{H^{-1}}\) in place of \(\mathcal{V}(A)\). \par
    Now that we have reduced the proof of (\ref{lojasiewicz:lojasiewiczinequality}) to an easier setting, we next carry out the Lyapunov-Schmidt reduction step of Simon's argument. In the sequel, we change our reference connection to be the smooth connection \(A_1\), and hence define Sobolev norms with respect to \(A_1\). Since \(M\) is compact, the norms induced by \(\nabla_{\text{ref}}\) and \(\nabla_{A_1}\) are equivalent. \par 
    By Sobolev multiplication, the mapping \(a \mapsto \mathcal{YM}(A)\), where \(\nabla_A = \nabla_{A_1} + a\), is a well-defined, polynomial function on \(H^q\). 
    We define maps
    \begin{align*}
        M &: H^q \to H^{q-2} \\
        M(a) &:= D_A^\ast F_A, \\
        \tilde{M} &: H^q \to H^{q-2} \\
        \tilde{M}(a) &:= D_A^\ast F_A + D_{A_1}D_{A_1}^\ast a \\
        L_A &: H^q \to H^{q-2} \\
        L_A(b) &:= D_A^\ast D_Ab  + D_{A_1} D_{A_1}^\ast b+ (-1)^{3n+1}\ast (b \wedge \ast F_A).
    \end{align*}  Observe that \(L_A\) is the linearization of \(\tilde{M}\) at \(a\), i.e.,
    \begin{align*}
        L_A = (\nabla \tilde{M})(a).
    \end{align*}Now, \(L_{A_1}\) is self-adjoint and elliptic, so by elliptic theory
    \begin{align*}
        \mathcal{H} := \text{ker}(L_{A_1})
    \end{align*} is of finite dimension and consists of smooth sections. Define 
    \begin{align*}
        \Pi &: H^q \to \mathcal{H} \\
        \Pi(a) &:= \text{proj}_{\mathcal{H}}^{L^2}(a) \\
        N &: H^q \to H^{q-2} \\
        N &:= \tilde{M} + \Pi.
    \end{align*} Note that the linearization of \(N\) at \(a = 0\), i.e \((\nabla N)(0)\), equals \(L_{A_1} + \Pi\), which is invertible. Thus as in 
    \cite{leonsimon}, there exist open sets \(U_1 \subset H^q, U_2 \subset H^{q-2}\), with \(0 \in U_i\), and a diffeomorphism
    \begin{align*}
        \Psi : U_2 \to U_1
    \end{align*} which is inverse to \(N\). Next, set
    \begin{align*}
        V := \text{ker}(D_{A_1}^\ast) \subset H^q,
    \end{align*} and let \(\psi_1, \dots, \psi_m\) be an \(L^2\)-orthonormal basis of \(\mathcal{H} \cap V\). For \(\delta > 0\) small enough, we have a real-analytic \cite[(2.12)]{leonsimon} function
    \begin{align*}
        F &: B_\delta(0) \subset \R^m \to \R \\
        F(\xi) &= \mathcal{YM}\left(A_1 + \Psi\left(\sum_{j = 1}^m \xi^j \psi_j\right)\right).
    \end{align*} This completes the Lyapunov-Schmidt reduction step. \par
    Now that we have the existence of the local inverse \(\Psi\) of \(N\), we next show the following smoothing estimates for \(\Psi\), which are analogues of \cite[(2.9) and (2.10)]{leonsimon}: for \(a, b \in U_1\)
    \begin{align}
        \|\Psi(a) - \Psi(b)\|_{H^1} &\leq C\|a - b\|_{H^{-1}}, \label{lojasiewicz:PsiH1Lipschitz} \\
        \|\Psi(a) - \Psi(b)\|_{H^q} &\leq C\|a - b\|_{H^{q-2}}. \label{lojasiewicz:PsiHqLipschitz}
    \end{align} We first consider (\ref{lojasiewicz:PsiH1Lipschitz}). By elliptic theory we have for \(v \in H^q\)
    \begin{align*}
        \|v\|_{H^1} \leq C\|(L_{A_1}+\Pi)v\|_{H^{-1}}.
    \end{align*} Then by Sobolev multiplication, for a given \(\varepsilon > 0\), we may shrink \(U_1, U_2\) so that for \(a \in U_1\) and \(v \in H^q\)
    \begin{align*}
        \|((\nabla \tilde{M})(a) - L_{A_1})(v)\|_{H^{-1}} &\leq C\|(\nabla_{A_1} a) \# v + a \# \nabla_{A_1} v + a \# a \# v\|_{H^{-1}} \\
        &\leq C(\|\nabla_{A_1} a\|_{H^{q-1}} + \|a \# a\|_{H^{q-1}})\|v\|_{H^1} + C\|a\|_{H^q}\|\nabla_{A_1} v\|_{H^0} \\
        &\leq C(\|a\|_{H^{q}} + \|a\|_{H^{q}}^2)\|v\|_{H^1} + C\|a\|_{H^q}\|v\|_{H^1} \\
        &\leq \varepsilon \|v\|_{H^1}.
    \end{align*} Hence
    \begin{align*}
         \|(L_{A_1}+\Pi)v\|_{H^{-1}} &\leq \|((\nabla N)(a))v\|_{H^{-1}} + \|((\nabla \tilde{M})(a) - L_{A_1})(v)\|_{H^{-1}} \\
         &\leq \|((\nabla N)(a))v\|_{H^{-1}} + \frac{1}{2C}\|v\|_{H^1}.
    \end{align*} We deduce that for \(w \in H^{q-2}\)
    \begin{align*}
        \|((\nabla N)(a))^{-1} w\|_{H^1} \leq C\|w\|_{H^{-1}}.
    \end{align*} We may further shrink \(U_1, U_2\) so that \(U_2\) is an open ball about \(0\), which is convex and hence \(\Psi\) is defined on \(a + t(b-a)\) for \(a, b \in U_2, t\in [0, 1]\). Then
    \begin{align*}
        \|\Psi(a) - \Psi(b)\|_{H^1} &\leq \int_0^1 t\|((\nabla \Psi)(a + t(b-a)))(b-a)\|_{H^1} \, dt  \\
        &= \int_0^1 t\|((\nabla N)(\Psi(a + t(b-a))))^{-1}(b-a)\|_{H^1} \, dt \\
        &\leq C\|b-a\|_{H^{-1}}.
    \end{align*} Thus (\ref{lojasiewicz:PsiH1Lipschitz}) holds. \par 
    The other smoothing estimate (\ref{lojasiewicz:PsiHqLipschitz}) follows from a similar Sobolev multiplication argument. Thus we have the desired smoothing estimates for \(\Psi\). \par 
    Now that we have smoothing estimates for \(\Psi\), we proceed with proving the {\L}ojasiewicz inequality, which 
    we only need to do for \(a \in U_1 \cap V\). We will do this, as in \cite{leonsimon}, by establishing the following two inequalities: given \(a \in U_1 \cap V\), upon which we have \(\xi \in B_{\delta}(0)\) defined by \(\Pi(a) = \sum_{j = 1}^m \xi^j \psi_j\),
    \begin{align}
        &|\mathcal{YM}(A_1 + a) - \mathcal{YM}(A_1 + \Psi(\Pi(a))| \leq C\|M(a)\|_{H^{-1}}^2 \label{lojasiewicz:upperbound} \\
        &|(\nabla F)(\xi)| \leq C\|M(a)\|_{H^{-1}} \label{lojasiewicz:lowerbound}.
    \end{align} \par
    We first consider (\ref{lojasiewicz:upperbound}). We compute as in as in \cite[(2.31)]{leonsimon}
    \begin{align*}
        &|\mathcal{YM}(A_1+\Psi(\Pi(a))) - \mathcal{YM}(A_1 + a)| \\
        = &\left| \int_0^1 \frac{d}{dt}\mathcal{YM}(A_1 + a + t(\Psi(\Pi(a)) - a)) \, dt \right| \\
        = & \left| \int_0^1 \int_M \langle M(a + t(\Psi(\Pi(a)) - a)), \Psi(\Pi(a)) - a\rangle \, dt\right| \\\leq &\int_0^1 \|M(a + t(\Psi(\Pi(a))-a)\|_{H^{-1}} \|\Psi(\Pi(a)) - a\|_{H^1} \, dt \\
        = &\int_0^1 \|M(a + t(\Psi(\Pi(a))-\Psi(N(a)))\|_{H^{-1}} \|\Psi(\Pi(a)) - \Psi(N(a))\|_{H^1} \, dt.
    \end{align*} Denoting
    \begin{align*}
        b = t(\Psi(\Pi(a)) - \Psi(N(a))),
    \end{align*} we next estimate \(\|M(a+b)\|_{H^{-1}}\). We compute
    \begin{align*}
        M(a+b) &= D_A^\ast F_A + (D_A^\ast D_A b + \nabla_A b \# b + F_A \# b + b \# b \# b) \\
        &= M(a) \\
        &+ (D_{A_1}^\ast D_{A_1} b + (\nabla_{A_1} a) \# b + a\#\nabla_{A_1} b + a \# a \# b + (\nabla_{A_1} b) \# b + a \# b \# b + F_A \# b + b \# b \# b).
    \end{align*} Since \(q > \frac{n}{2}-1\) and \(q \geq 1\), we may estimate the \(H^{-1}\) norm of the terms in the parentheses using Sobolev multiplication to obtain
    \begin{align*}
        \|D_{A_1}^\ast D_{A_1} b\|_{H^{-1}} &\leq C\|b\|_{H^1} \\
        \|(\nabla_{A_1} a) \# b \|_{H^{-1}} &\leq C\|\nabla_{A_1} a\|_{H^{q-1}}\|b\|_{H^1} \leq C\|a\|_{H^q}\|b\|_{H^1} \\
        \|a \# \nabla_{A_1} b\|_{H^{-1}} &\leq C\|a\|_{H^q} \|\nabla_{A_1} b\|_{H^0} \leq C\|a\|_{H^q}\|b\|_{H^1} \\
        \|a \# a \# b\|_{H^{-1}} &\leq C\|a \# a\|_{H^{q-1}} \|b\|_{H^1} \leq C\|a\|_{H^q}^2\|b\|_{H^1} \\
        \|(\nabla_{A_1} b) \# b\|_{H^{-1}} &\leq C\|b\|_{H^q} \|b\|_{H^1} \\
        \|a \# b \# b\|_{H^{-1}} &\leq C\|a\|_{H^q}\|b\|_{H^q}\|b\|_{H^{-1}} \\
        \|F_A \# b\|_{H^{-1}} &\leq C\|F_A\|_{H^{q-1}} \|b\|_{H^1} \leq C(1 +  \|a\|_{H^q}^2)\|b\|_{H^1} \\
        \|b \# b \# b\|_{H^{-1}} &\leq C\|b\|_{H^q}^2\|b\|_{H^1}.
    \end{align*}Thus if \(\|a\|_{H^q} + \|b\|_{H^q} \leq 1\),
    \begin{align}\label{lojasiewicz:D*FH-1continuous}
        \|M(a + b) - M(a)\|_{H^{-1}} \leq C\|b\|_{H^1}.
    \end{align} Now, we can make \(\|a\|_{H^q}\) small by shrinking \(U_1, U_2\). Moreover, since we assumed \(a \in V\), we have \(M(a) = \tilde{M}(a) = N(a) - \Pi(a)\), so the smoothing estimate (\ref{lojasiewicz:PsiHqLipschitz}) yields
    \begin{align*}
        \|b\|_{H^q} \leq C\|\Pi(a) - N(a)\|_{H^{q-2}} = C\|M(a)\|_{H^{q-2}}.
    \end{align*} Thus, since \(A_1\) is Yang-Mills, we can make \(\|M(a)\|_{H^{q-2}}\) small by shrinking \(U_1, U_2\). We may assume \(\|a\|_{H^q} + \|b\|_{H^q} \leq 1\) and consequently the bound
    \begin{align*}
        \|M(a + b)\|_{H^{-1}} \leq C(\|M(a)\|_{H^{-1}} + \|b\|_{H^1}).
    \end{align*} Therefore, the other smoothing estimate (\ref{lojasiewicz:PsiH1Lipschitz}) yields
    \begin{align*}
        |\mathcal{YM}(A_1 + \Psi(\Pi(a))) - \mathcal{YM}(A_1+a)| &\leq
        C(\|M(a)\|_{H^{-1}} + \|b\|_{H^1})\|\Psi(\Pi(a)) - \Psi(N(a))\|_{H^1} \\
        &\leq C(\|M(a)\|_{H^{-1}} + \|M(a)\|_{H^{-1}})\|M(a)\|_{H^{-1}}\\
        &= C\|M(a)\|_{H^{-1}}^2.
    \end{align*} This establishes (\ref{lojasiewicz:upperbound}). \par 
    We next establish (\ref{lojasiewicz:lowerbound}). Given \(\xi \in B_\delta(0) \subset \R^m\), set \(\iota(\xi) = \sum_{j = 1}^m \xi^j \psi_j\). Observe that
    \begin{align*}
        |(\partial_i F)(\xi)| &= \left|\int \langle M(\Psi(\iota(\xi))), \xi^i((d\Psi)(\iota(\xi)))(\psi_i)\rangle\right| \\
        &\leq |\xi|\|M(\Psi(\iota(\xi)))\|_{H^{-1}} \|((d\Psi)(\iota(\xi)))(\psi_i)\|_{H^1} \\
        &\leq C|\xi|\|M(\Psi(\iota(\xi)))\|_{H^{-1}}.
    \end{align*} Now, \(a \in V\), then \(\Pi(a) \in \mathcal{H} \cap V\), so there exists \(\xi\) such that \(\iota(\xi) = \Pi(a)\). We compute as in \cite[(2.20)]{leonsimon} that
    \begin{align*}
        |(\nabla F)(\xi)| &\leq C\|M(\Psi(\Pi(a))\|_{H^{-1}} \\
        &\leq C(\|M(a)\|_{H^{-1}}  + \|M(\Psi(\Pi(a))) - M(a)\|_{H^{-1}}).
    \end{align*} By (\ref{lojasiewicz:D*FH-1continuous}), we have that if 
    \begin{align*}
        \|a\|_{H^q} + \|\Psi(\Pi(a)) - a\|_{H^q} \leq 1,
    \end{align*} then
    \begin{align*}
        \|M(\Psi(\Pi(a))) - M(a)\|_{H^{-1}} \leq \|\Psi(\Pi(a)) - a\|_{H^1}.
    \end{align*} We already have that \(\|a\|_{H^q}\) is small. Moreover, since \(a = \Psi(N(a))\), and since \(N(a) - \Pi(a) = M(a)\) by the assumption \(a \in V\), the smoothing estimate (\ref{lojasiewicz:PsiHqLipschitz}) yields
    \begin{align*}
        \|\Psi(\Pi(a)) - a\|_{H^q} = \|\Psi(\Pi(a)) - \Psi(N(a))\|_{H^q} \leq C\|\Pi(a) - N(a)\|_{H^{q-2}} \leq C\|M(a)\|_{H^{q-2}},
    \end{align*} which is also small. Hence by the other smoothing estimate (\ref{lojasiewicz:PsiH1Lipschitz}),
    \begin{align*}
        |\nabla F(\xi)| &\leq C(\|M(a)\|_{H^{-1}}  + \|\Psi(\Pi(a))) - a\|_{H^1}) \\
        &\leq C(\|M(a)\|_{H^{-1}} + \|\Pi(a) - N(a)\|_{H^{-1}}) \\
        &= C\|M(a)\|_{H^{-1}}.
    \end{align*}This establishes (\ref{lojasiewicz:lowerbound}). \par
    Finally, now that we have (\ref{lojasiewicz:upperbound}) and (\ref{lojasiewicz:lowerbound}), the same computation as in \cite[(2.22)]{leonsimon} yields
    \begin{align*}
        |\mathcal{YM}(A_1+a) - \mathcal{YM}(A_1)|^{1-\theta} \leq C\|M(a)\|_{H^{-1}}.
    \end{align*} Since \(\mathcal{YM}(A_1 + a) \to \mathcal{YM}(A_1)\) uniformly in \(a\), we may take \(\theta\) smaller and shrink \(U_1\) to remove the constant \(C\) in the estimate. The lemma thus follows.
\end{proof}

We also have the following uniform statement for the flat connections.

\begin{lemma}\label{lemma:uniformlojasiewicz}
    Let \(q > \lceil\frac{n}{2}-1\rceil\) and \(q \geq 1\).
    Given a \( (G, \rho) \)-bundle \(E \to M\), let $\mathcal{F} \subset H^q \left(\mathcal{A}_E\right)$
    denote the set of flat connections. There exists \(\theta_0 \in (0, \frac12) \) and a gauge-invariant $H^q$-neighborhood \(U \supset \mathcal{F}\) such that for all \(A \in U\), we have
    \begin{align}\label{uniformlojasiewicz:est}
    \mathcal{YM}(A)^{1 - \theta_0} \leq \mathcal{V}(A).
\end{align} 
Moreover, the solution of (\ref{ymf}) starting at \(A \in U\) exists for all time and converges polynomially to a flat connection in $H^q$.
\end{lemma}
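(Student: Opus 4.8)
\medskip\noindent\textit{Proof proposal.} The plan is to first obtain \eqref{uniformlojasiewicz:est} from the local {\L}ojasiewicz inequality of Lemma~\ref{lemma:lojasiewicz} by a compactness argument, and then to run the {\L}ojasiewicz--Simon convergence scheme along \eqref{ymf}; for long-time existence and uniform curvature bounds I would invoke the parabolic Morrey estimates \cite[Theorem~1.2]{estimatespaper} (equivalently, the monotonicity formula and $\varepsilon$-regularity \cite[Theorems~5.7 and~6.2]{oliveirawaldron}, as in the proofs of Theorem~\ref{thm:flatgap}), and to upgrade $L^2$-convergence to $H^q$-convergence I would use the Sobolev-distance estimate Theorem~\ref{thm:Hkestimates} (or its closed-manifold analogue \cite[Prop.~3.3]{instantons}). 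For the uniform inequality: each $A_1\in\mathcal{F}$ is Yang--Mills with $\mathcal{YM}(A_1)=0$, so Lemma~\ref{lemma:lojasiewicz} provides a gauge-invariant $H^q$-neighborhood $U_{A_1}$ and an exponent $\theta_{A_1}\in(0,\tfrac12)$ with $\mathcal{YM}(A)^{1-\theta_{A_1}}\le\mathcal{V}(A)$ on $U_{A_1}$. Since $\mathcal{F}/\mathcal{G}_E$ is compact---via holonomy it is a quotient of the compact space $\Hom(\pi_1(M),G)$---and the quotient map $H^q(\mathcal{A}_E)\to H^q(\mathcal{A}_E)/H^{q+1}(\mathcal{G}_E)$ is open, finitely many of the $U_{A_1}$, say $U_{A_1},\dots,U_{A_N}$, already cover $\mathcal{F}$. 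Setting $\theta_0:=\min_i\theta_{A_i}\in(0,\tfrac12)$ and $U_0:=\big(\bigcup_iU_{A_i}\big)\cap\{A:\mathcal{YM}(A)<1\}$, a gauge-invariant open neighborhood of $\mathcal{F}$, one has for $A\in U_0$ that $A\in U_{A_i}$ for some $i$ and $0\le\mathcal{YM}(A)\le1$, hence $\mathcal{YM}(A)^{1-\theta_0}\le\mathcal{YM}(A)^{1-\theta_{A_i}}\le\mathcal{V}(A)$; this is \eqref{uniformlojasiewicz:est} on $U_0$.

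The neighborhood $U$ of the statement will be a smaller gauge-invariant neighborhood of $\mathcal{F}$ inside $U_0$, chosen so that $\|F_{A_0}\|_{M^{2,4}}<\delta$ for $A_0\in U$ with $\delta$ arbitrarily small; this is possible by Remark~\ref{rmk:Hqcontrolsmorrey} since $q>\tfrac n2-1$ and the curvature vanishes on $\mathcal{F}$. The cited parabolic estimates then give that the solution $A(t)$ of \eqref{ymf} with $A(0)=A_0$ exists for all time, with $\sup_{t\ge0}\|F(t)\|_{M^{2,4}}\le C\delta$ and $\sup_{t\ge1}\|F(t)\|_{C^j(M)}\le C_j\delta$ (the latter by parabolic smoothing); moreover $\mathcal{YM}(A(t))=\tfrac12\|F(t)\|^2$ is nonincreasing, and by item~(\ref{wellposedness:gaugeequivalenttosmooth}) of Theorem~\ref{thm:wellposedness} we may assume $A(t)$ smooth for $t>0$. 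Next, using Uhlenbeck's gauge-fixing theorem---a connection with sufficiently small curvature norm is gauge-equivalent to an $H^q$-small perturbation of a flat connection, cf.\ the compactness arguments of \cite{waldronuhlenbeck} and \cite[\textsection2.3]{donkron}---together with the gauge-invariance of $U_0$, I would conclude that for $\delta$ small enough $A(t)\in U_0$ for all $t\ge1$.

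With this in hand, \eqref{uniformlojasiewicz:est} and the bound $\mathcal{V}(A)\le\|D_A^*F_A\|$ give, for $t\ge1$ and $g(t):=\mathcal{YM}(A(t))$,
\begin{equation*}
-\frac{d}{dt}\,g(t)^{\theta_0}=\theta_0\,g(t)^{-(1-\theta_0)}\,\|D_A^*F_A\|^2\ \ge\ \theta_0\,\|D_A^*F_A\|\ =\ \theta_0\Big\|\tfrac{\partial A}{\partial t}\Big\|,
\end{equation*}
so $\int_1^\infty\|\partial_tA\|\,dt\le\theta_0^{-1}g(1)^{\theta_0}<\infty$ and $A(t)$ is $L^2$-Cauchy. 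Applying Theorem~\ref{thm:Hkestimates} with the fixed start time $t_1=1$---where $A(1)$ is smooth, so $\|A(1)\|_{H^{q-1}}<\infty$, and $k=q-1$ meets the hypothesis since the assumptions force $q\ge\lceil\tfrac n2\rceil$---bounds $\sup_{t\ge1}\|A(t)\|_{H^q}$ and shows $\|A(t)-A(t')\|_{H^q}\le C\rho(t')$ for $1\le t'\le t$, with $\rho(t'):=\int_{t'}^\infty\|D_A^*F_A\|\to0$; hence $A(t)\to A_\infty$ in $H^q$. Since $\int_1^\infty\|D_A^*F_A\|^2\,dt<\infty$ and $A\mapsto D_A^*F_A$ is continuous, $A_\infty$ is Yang--Mills, so $\mathcal{V}(A_\infty)=0$; as $A_\infty\in\overline{U_0}$ and $\mathcal{YM}$ is continuous on $H^q$, \eqref{uniformlojasiewicz:est} forces $\mathcal{YM}(A_\infty)=0$, i.e.\ $A_\infty\in\mathcal{F}$. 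For the polynomial rate: along the flow in $U_0$, $g'=-\|D_A^*F_A\|^2\le-\mathcal{V}(A)^2\le-g^{2(1-\theta_0)}$ with $2(1-\theta_0)>1$, which integrates to $g(t)\le Ct^{-1/(1-2\theta_0)}$; then $\rho(t)\le\theta_0^{-1}g(t)^{\theta_0}\le Ct^{-\theta_0/(1-2\theta_0)}$, and Theorem~\ref{thm:Hkestimates} gives $\|A(t)-A_\infty\|_{H^q}\le C\rho(t)\le C't^{-\theta_0/(1-2\theta_0)}$ (the constant depending on $A_0$), which is the claimed polynomial convergence.

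The step I expect to be the main obstacle is keeping the trajectory inside a neighborhood where \eqref{uniformlojasiewicz:est} applies while simultaneously controlling Sobolev norms: because the relevant neighborhoods are gauge-invariant and hence unbounded in $H^q$, one must use Uhlenbeck's small-curvature gauge theorem to trap the flow in $U_0$, and then exploit Theorem~\ref{thm:Hkestimates} with a \emph{fixed positive} start time to turn the finite $L^2$-length supplied by the {\L}ojasiewicz inequality into a bound on the $H^q$-norm and the $H^q$-length of the trajectory. One must also take care to source the long-time existence, curvature bounds, and small-curvature gauge theorem from \cite{estimatespaper,oliveirawaldron,waldronuhlenbeck} rather than from Theorem~\ref{thm:flatgap}, whose proof relies on this lemma.
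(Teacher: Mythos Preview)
Your approach is correct but takes a more hands-on route than the paper's. Two differences worth noting.

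For the uniform exponent $\theta_0$, the paper argues by contradiction: if no uniform $\theta_0$ works, take flat $B_i$ witnessing failure with $\theta_i\searrow0$; strong Uhlenbeck compactness for flat connections gives $u_i(B_i)\to B_\infty\in\mathcal{F}$ in $H^q$, and then the local inequality at $B_\infty$ (with some fixed $\theta_\infty$) contradicts the choice of $B_i$ for large $i$, by gauge-invariance. Your finite-cover argument via compactness of $\Hom(\pi_1(M),G)/G$ is equivalent in spirit, but requires checking that the quotient topology on $\mathcal{F}/\mathcal{G}_E$ induced from $H^q$ agrees with the one coming from the holonomy description, and that $\mathcal{F}/\mathcal{G}_E$ sits inside as a closed (hence compact) subset corresponding to the topological type of $E$. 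The paper's phrasing sidesteps this by using Uhlenbeck compactness directly.

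For the convergence part, the paper is considerably shorter: it simply invokes Theorem~\ref{thm:continuityatinfinitetime} with $A_0=A_\infty=B$ a flat (hence static) solution. Since $\mathcal{YM}\ge0$, alternative~(1) there---$\mathcal{YM}(A'(\tau))\le\mathcal{YM}(B)-\delta_\infty=-\delta_\infty<0$---is impossible, so alternative~(2) holds automatically, yielding long-time existence and $H^q$-convergence to a Yang--Mills connection near $B$; the limit has $\mathcal{YM}=0$ by the {\L}ojasiewicz inequality and is therefore flat. Polynomial rate is then the standard consequence. This packaging avoids your explicit trapping argument entirely: you do not need parabolic Morrey estimates, small-curvature gauge fixing, or a separate invocation of Theorem~\ref{thm:Hkestimates} here, since all of that machinery is already inside the proof of Theorem~\ref{thm:continuityatinfinitetime}. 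Your direct scheme is sound (and you were right to flag the trapping step as the delicate point and to avoid citing Theorem~\ref{thm:flatgap}), but it duplicates work.
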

\begin{proof}
    Suppose for 
    contradiction that there does not exist \(\theta \in (0, \frac{1}{2})\) such that every \(B \in \mathcal{F}\) has an \(H^q\) neighborhood \(U_B\) for which the inequality (\ref{uniformlojasiewicz:est}) holds. Then there exists \(\theta_i \searrow 0\) and a sequence of flat connections \(B_i\) such that for each \(i\), the inequality (\ref{uniformlojasiewicz:est}) with \(\theta_0 = \theta_i\) fails on every neighborhood of \(B_i\). Since the \(B_i\) are all flat, strong Uhlenbeck compactness implies that we may pass to a subsequence, also labeled by \(i\), such that there exist gauge transformations \(u_i \in H^{q+1}\) for which \(u_i(B_i)\) converge in \(H^q\) to some flat connection \(B_\infty\). By Lemma \ref{lemma:lojasiewicz}, there exists \(\theta_\infty \in (0, \frac{1}{2})\) and a neighborhood \(U_{B_\infty}\) of \(B_\infty\) such that (\ref{uniformlojasiewicz:est}) holds on \(U\) with \(\theta_0 = \theta_\infty\). Thus for \(i\) sufficiently large, (\ref{uniformlojasiewicz:est}) holds on some neighborhood \(U\) of \(u_i(B_i)\). But (\ref{uniformlojasiewicz:est}) is gauge-invariant, so (\ref{uniformlojasiewicz:est}) holds on the neighborhood \(u_i^{-1}(U_{B_{\infty}})\) of \(B_i\), which is a contradiction, as desired. Thus there exists \(\theta_0 \in (0, \frac{1}{2})\) such that every \(B \in \mathcal{F}\) has a neighborhood \(U_B\) for which (\ref{uniformlojasiewicz:est}) holds with \(\theta_0\). \par 
    By Theorem \ref{thm:continuityatinfinitetime}, we may assume \(U_B\) is such that for all \(A \in U_B\), the solution of (\ref{ymf}) starting at \(A\) exists for all time and converges polynomially in \(H^q\) to a flat connection. Thus the open set
    \begin{align*}
        U := \bigcup_{B \text{ flat, } u \in H^{q+1}(\mathcal{G}_E)} u(U_B)
    \end{align*}is a neighborhood of the flat connections satisfying the assertions of the lemma, as desired. 
\end{proof}

\vspace{5mm}

\subsection{Proof of Theorem \ref{thm:continuityatinfinitetime}} \label{continuityatinftime}
\begin{proof} 
    By Theorem \ref{thm:wellposedness}(1), for $\delta$ is sufficiently small, $A'(T_0)$ will be contained in the neighborhood $B_{\eps}(A_\infty)$ for $T_0$ large enough; we can therefore reduce to the case that the given solution $A(t) \equiv A_\infty$ is static. In particular, given $\eps > 0,$ it suffices to determine $\delta > 0$ such that for $A'_0 \in B_\delta(A_\infty),$ either alternative (1) holds for $A'(t)$ or else
    \begin{equation}\label{alternate(2)}
        \| A'(t) - A_\infty \| < \eps
    \end{equation}
    for all $t$ and $A'(t)$ converges, which is equivalent to (2). 

    Suppose that for \(\delta_\infty > 0\) to be determined, we have
    \begin{align}\label{continuityatinfinitetime:energydoesntdropmuch}
        \mathcal{YM}(A'(t)) > \mathcal{YM}(A_\infty) - \delta_\infty
    \end{align} for $t \in \LB 0 , \tau \right),$ where $\tau \leq \infty$ is the maximal such time. The two cases are distinguished by (1) $\tau < \infty$ and (2) $\tau = \infty.$ 

    Let \(\kappa > 0\) be such that the {\L}ojasiewicz inequality (\ref{lojasiewicz:lojasiewiczinequality}) holds on \(B_{3\kappa}(A_\infty)\). 
We assume that $\delta < \kappa$ is small enough that $A'(t) \subset B_{\kappa}(A_\infty)$ for $0 \leq t \leq 1.$ Now, supposing that \(A'(t) \in B_{3\kappa}(A_\infty)\) for \(t\) in some interval \([a, b] \subset \LB 0, \tau \right) \), 
we compute
    \begin{align*}
        \int_a^{b} & \left\|D_{A'(t)}^\ast F_{A'(t)}\right\| \, dt \\
        &= \int_a^z \frac{-\frac{d}{dt}\left(\mathcal{YM}(A'(t)) - \mathcal{YM}(A_\infty)\right)}{\left\|D_{A'(t)}^\ast F_{A'(t)}\right\|} \, dt + \int_z^{b} \frac{\frac{d}{dt}\left(\mathcal{YM}(A_\infty) - \mathcal{YM}(A'(t))\right)}{\left\|D_{A'(t)}^\ast F_{A'(t)}\right\|} \, dt  \\
        &\leq \int_a^z \frac{-\frac{d}{dt} (\mathcal{YM}(A'(t))-\mathcal{YM}(A_\infty))}{(\mathcal{YM}(A'(t)) - \mathcal{YM}(A_\infty))^{1-\theta}} \, dt + \int_z^{b} \frac{\frac{d}{dt} (\mathcal{YM}(A_\infty) - \mathcal{YM}(A'(t)))}{(\mathcal{YM}(A_\infty) - \mathcal{YM}(A'(t)))^{1-\theta}} \, dt \\
        &\leq \frac{1}{\theta}\left((\mathcal{YM}(A'(a)) - \mathcal{YM}(A_\infty)))^\theta + (\mathcal{YM}(A_\infty) - \mathcal{YM}(A'(b)))^{\theta}\right) \\
        &< \frac{1}{\theta}\left((\mathcal{YM}(A'_0) - \mathcal{YM}(A_\infty))^\theta + \delta_{\infty}^{\theta}\right).
    \end{align*}
    Assuming that $\delta$ and $\delta_\infty$ are sufficiently small, Theorem \ref{thm:Hkestimates} shows that in fact $A'(t)$ remains inside $B_{2\kappa}(A_\infty)$ for all $t \in \LB 0, \tau \right).$ The rest of the argument is a standard application of the \L ojasiewicz-Simon inequality and Gronwall's inequality to $\mathcal{YM}(A'(t)),$ which we omit.
\end{proof}

\subsection*{Data availability and conflict of interest statement}

This paper has no associated data and the authors have no conflicts of interest.

\bibliographystyle{amsinitial}
\bibliography{biblio}

\end{document}